\documentclass[11pt,a4paper, reqno]{amsart}
\usepackage{mathrsfs}

\usepackage{amsmath,amsfonts,verbatim}
\usepackage{latexsym}
\usepackage{amssymb,leftidx}
\usepackage{extarrows}
\usepackage{overpic}
\usepackage{color}
\usepackage{epsfig}
\usepackage{subfigure}
\usepackage{tikz}
\usepackage{hyperref}
\colorlet{RED}{red}

\hypersetup{urlcolor=blue, citecolor=red}

\usepackage{amssymb}
\usepackage{mathrsfs}
\usepackage{amscd}
\usepackage{bbm}
\usepackage{cite}

\newcommand\R{\mathbb{R}}

\newcommand\N{\mathbb{N}}

\usepackage{graphicx}
\usepackage{float}

\numberwithin{equation}{section}
\newtheorem{proposition}{Proposition}[section]

\newtheorem{lemma}{Lemma}[section]
\newtheorem{theorem}{Theorem}[section]

\newtheorem{remark}{Remark}[section]

\begin{document}
\title[Dispersive estimates with attractive coulomb potential]{\texorpdfstring{{Sharp Dispersive Estimates for the Schr\"odinger Equation with an Attractive Coulomb Potential}}{Sharp Dispersive Estimates for the Schr\"odinger Equation with an Attractive Coulomb Potential}}

\author{Xitao Gao}
\address{Department of Mathematics, Beijing Institute of Technology, Beijing 100081; }
\email{xitao\_gao@bit.edu.cn}

\author{Qiuye Jia}
\address{Department of Mathematics, the Australian National University; }
\email{Qiuye.Jia@anu.edu.au; }

\author{Junyong Zhang}
\address{Department of Mathematics, Beijing Institute of Technology, Beijing 100081; }
\email{zhang\_junyong@bit.edu.cn; }

\begin{abstract}

We prove sharp dispersive $L^1 \to L^\infty$ estimates for the three-dimensional attractive Coulomb operator $H_Z=-\Delta-Z|x|^{-1}$, where $Z>0$. The absolutely continuous part of the Schr\"odinger evolution decays at the free rate for short times, whereas its leading contribution decays like $|t|^{-1}$ for long times, with amplitude proportional to $Z$. This slower decay is driven by the threshold and is sharp when $Z^2|t|\gg1$.

\end{abstract}

\maketitle

\begin{center}
 \begin{minipage}{120mm}
   { \small {\bf Key Words:  Dispersive estimates,  attractive Coulomb Schrödinger operator,   Schr\"odinger equation}
      {}
   }\\
    { \small {\bf AMS Classification:}
      { 42B37, 35Q40, 35Q41.}
      }
 \end{minipage}
 \end{center}


\tableofcontents

\section{Introduction}

\subsection{Background and motivation}

We consider the three-dimensional Schr\"odinger operator with an attractive Coulomb potential
\begin{align}
\label{eq:HZ-def}
H_Z = -\Delta - \frac{Z}{|x|}, \qquad Z>0, \quad x \in \mathbb{R}^3\setminus\{0\}.
\end{align}
This operator is a standard model in quantum mechanics. In this paper, the normalization in \eqref{eq:HZ-def} is fixed throughout: all spectral and dynamical statements concern the full-Laplacian operator $H_Z=-\Delta-Z|x|^{-1}$.

The free Schr\"odinger equation in $\mathbb{R}^3$ (i.e., without the potential) satisfies the dispersive estimate
\begin{align*}
\| e^{it\Delta} f \|_{L^\infty(\mathbb{R}^3)} \leq C |t|^{-3/2} \| f \|_{L^1(\mathbb{R}^3)}, \qquad t \neq 0,
\end{align*}
which reflects the spreading of the wave packet. For Schr\"odinger operators with potentials, establishing dispersive estimates analogous to the free case is a delicate problem that has attracted considerable attention over the past decades. Early foundational work by Journ\'e, Soffer, and Sogge \cite{JSS91} established decay estimates for a broad class of potentials using resolvent techniques. In one dimension, Goldberg and Schlag \cite{GS04} proved the optimal $|t|^{-1/2}$ decay under suitable decay and regularity assumptions on the potential. In three dimensions, sharp results were obtained by Beceanu and Goldberg \cite{BG12} for generic potentials and by Beceanu \cite{Be16} for exceptional cases involving threshold resonances or eigenvalues. For inverse square potentials $V(x)=a|x|^{-2}$, which decay critically, dispersive estimates have been established in \cite{Fanelli13} for $n=3$ and  \cite{Taira25} for high dimension.

For slowly decaying potentials of the form $V(x) = c |x|^{-\mu}$ with $0 < \mu < 2$, the low-energy behavior differs significantly from the free case, and standard perturbative approaches---such as the Born series---fail due to the slow decay at infinity. In the repulsive Coulomb case $V(x) = + |x|^{-1}$, recent work by Black, Toprak, Vergara, and Zou \cite{BTVZ23} established the dispersive estimate
\begin{align*}
\| e^{-it(-\Delta+|x|^{-1})} f \|_{L^\infty(\mathbb{R}^3)} \leq C |t|^{-3/2} \| f \|_{L^1(\mathbb{R}^3)}, \qquad |t| \geq 1,
\end{align*}
for radially symmetric initial data. Their proof relies on an explicit representation of the distorted Fourier transform in terms of Whittaker functions and a detailed oscillatory integral analysis.

For attractive potentials $V(x)=-c|x|^{-\mu}$ with $0<\mu<2$, the situation is substantially more subtle. Their long-range attractive tail produces infinitely many negative eigenvalues accumulating at zero, which complicates the low-energy spectral structure. In one dimension, Hoshiya and Taira \cite{HT26} recently proved dispersive estimates for Schr\"odinger operators with attractive Coulomb-like potentials satisfying $V(x)\sim-c|x|^{-\mu}$ as $|x|\to\infty$. 
Sussman \cite{Sussman-low} obtained low-energy resolvent asymptotics for smooth attractive Coulomb-like potentials on asymptotically conic manifolds, and investigate the properties of the resolvent output.

In this paper, we establish sharp dispersive estimates in three dimensions for the attractive Coulomb Hamiltonian $H_Z=-\Delta-Z|x|^{-1}$.

\subsection{Main results}
The operator $H_Z$ is understood as the self-adjoint realization associated with its closed quadratic form. Its essential spectrum is $[0,\infty)$ and it has infinitely many negative eigenvalues accumulating at $0$ \cite{RS72}. We denote by $P_{\mathrm{ac}}(H_Z)$ the orthogonal projection onto the absolutely continuous subspace.

\begin{theorem}
\label{thm:main}
Let $Z>0$ and let $H_Z = -\Delta -Z |x|^{-1}$ be the self-adjoint Coulomb Hamiltonian on $L^2(\R^3)$. 
There is an absolute constant $C$ such that, for every $t\in\R\setminus\{0\}$ and for any function $f \in L^1(\mathbb{R}^3)$
\begin{equation}
\label{est:dis}
\| e^{-itH_Z} P_{\mathrm{ac}}(H_Z) f \|_{L^\infty(\mathbb{R}^3)} \leq C\big( |t|^{-3/2} +Z|t|^{-1}\big)\| f \|_{L^1(\mathbb{R}^3)},
\end{equation}
The matching lower bound in the long-time regime is proved in Proposition~\ref{cor:sharp-full}.
\end{theorem}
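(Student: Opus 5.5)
We first remove the parameter $Z$ by scaling. With $(U_\lambda f)(x)=\lambda^{3/2}f(\lambda x)$ we have $U_\lambda^{-1}H_Z U_\lambda=\lambda^{-2}(-\Delta-\lambda Z|x|^{-1})$. Choosing $\lambda=Z^{-1}$ gives $e^{-itH_Z}=U_\lambda e^{-it Z^2 H_1}U_\lambda^{-1}$, and the $L^1\to L^\infty$ norm picks up the factor $\lambda^{-3}=Z^3$. Hence \eqref{est:dis} is equivalent to
\begin{equation*}
\|e^{-isH_1}P_{\mathrm{ac}}(H_1)\|_{L^1\to L^\infty}\le C\,(|s|^{-3/2}+|s|^{-1}),\qquad s=Z^2t,
\end{equation*}
since $Z^3(Z^2|t|)^{-3/2}=|t|^{-3/2}$ and $Z^3(Z^2|t|)^{-1}=Z|t|^{-1}$. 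So it suffices to treat $Z=1$.

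Next we decompose in angular momentum. Write $f=\sum_{\ell,m}f_{\ell m}(r)Y_{\ell m}$, so that the kernel of $e^{-isH_1}P_{\mathrm{ac}}$ is
\begin{equation*}
\sum_\ell \frac{2\ell+1}{4\pi}P_\ell(\cos\theta)\,K_\ell(s;r,r'),\qquad K_\ell(s;r,r')=\int_0^\infty e^{-isk^2}\,\frac{F_\ell(k,r)F_\ell(k,r')}{rr'}\,d\mu_\ell(k).
\end{equation*}
Here $F_\ell(k,r)$ are the regular Coulomb wave functions, normalized through the distorted Fourier transform. They are Whittaker functions $M_{i/(2k),\ell+1/2}(2ikr)$ times Gamow-type factors $C_\ell(\eta)$ with $\eta=-1/(2k)$. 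For $Z>0$ the Gamow factor $|C_0(\eta)|^2=2\pi|\eta|/(1-e^{-2\pi|\eta|})\sim \pi/k$ blows up as $k\to0$, unlike the repulsive case, where it is exponentially small. This is the source of the slower decay.

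We split the energy with a cutoff $\chi(k)$ supported in $k\lesssim1$.

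\textbf{High energy, $k\gtrsim1$.} Here we aim for the free bound $|s|^{-3/2}$. We use the WKB/Liouville–Green description of $F_\ell$ uniformly in $\ell$ and $r$: the free Bessel behaviour is corrected by the logarithmic phase $\eta\log(2kr)$. Then we apply stationary phase in $k$. Alternatively, we can follow the oscillatory-integral scheme of \cite{BTVZ23}, which works equally for the attractive sign at high energy, since only the sign of $\eta$ changes and $|\eta|\le 1/2$. The sum over $\ell$ must be controlled uniformly, so that the total is bounded by $|s|^{-3/2}$ just like the free kernel. We would do this by comparing with the free spectral measure via the addition theorem and a resolvent identity, using the limiting absorption principle for $|x|^{-1}$ at energies $\ge1$.

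\textbf{Low energy.} Near threshold we use the Bessel-type asymptotics
\begin{equation*}
\frac{F_\ell(k,r)}{r}\approx \sqrt{\tfrac{\pi}{k}}\;\frac{J_{2\ell+1}(\sqrt{8r})}{\sqrt{2r}}\,\bigl(1+O(k^2r)\bigr),
\end{equation*}
which holds uniformly in $r$ on $k^2r\lesssim1$; for $k^2r\gtrsim1$ we switch to WKB. In the region $r,r'\lesssim k^{-2}$ the kernel becomes
\begin{equation*}
\int e^{-isk^2}\chi(k)\,k^{-1}\,k^2\,dk\cdot \Phi(r,r'),
\end{equation*}
where $\Phi$ is bounded uniformly after summing over $\ell$. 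The integral $\int e^{-isk^2}k\,\chi\,dk$ is $O(|s|^{-1})$. This produces exactly the $|s|^{-1}$ rate, and the bound is sharp, consistent with Proposition~\ref{cor:sharp-full}. The cross regions, where $k^2r\gtrsim1$, are handled by integrating by parts in $k$ against the phase $sk^2\pm k r\pm \log$ terms, gaining enough to match $|s|^{-1}$.

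The main obstacle is uniformity: the $|s|^{-1}$ bound must hold uniformly in $r,r'$ and after summing over $\ell$, across the transition zone $k^2r\sim1$. The Bessel-to-WKB turning-point region requires Langer-type uniform asymptotics of Whittaker functions. The first thing to try is an Olver-type uniform expansion in the variable $\sqrt{kr}$ with large parameter $\ell$. We would also use that $\sup_x\sum_\ell(2\ell+1)|J_{2\ell+1}(\sqrt{8r})|^2/r$ is bounded, which follows from Bessel addition formulas.
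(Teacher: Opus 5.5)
Your reduction to $Z=1$ is fine. (With your $U_\lambda$ one actually gets $U_\lambda^{-1}H_ZU_\lambda=\lambda^{2}(-\Delta-Z\lambda^{-1}|x|^{-1})$, so take $\lambda=Z$. Also, for $H_1=-\Delta-|x|^{-1}$ the zero-energy profile is $J_{2\ell+1}(2\sqrt r)$, not $J_{2\ell+1}(\sqrt{8r})$. Neither slip matters.) You also identify the right mechanism for the $|t|^{-1}$ term: the Gamow enhancement $C_0(\eta)^2\sim\pi/k$ makes the threshold amplitude behave like $k\,dk$. This is exactly what the paper uses for the lower bound in Section~4.

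As a proof of the upper bound, however, the proposal has a genuine gap: everything that makes the estimate hard is deferred. An $L^1\to L^\infty$ bound needs a pointwise bound on $\sum_\ell\frac{2\ell+1}{4\pi}P_\ell(\cos\theta)K_\ell(s;r,r')$, uniform in $r,r',\theta$. You have no approximation shown to be uniform in $\ell$ in the zone $k^2r\gtrsim1$ or near the turning points $r\sim\ell^2$. There the Bessel error $O(k^2r)$ is as large as the main term, and WKB degenerates. You call this ``the main obstacle'', but it is the content of the theorem.

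Your claim that the cross region is handled ``by integrating by parts in $k$'' is also false as stated. The phase $sk^2\pm k(r\pm r')+\dots$ has stationary points inside that region. For example, $r\sim r'\sim s$ with $|r-r'|\sim s^{1/2}$ gives $k_c\sim s^{-1/2}$, i.e.\ $k_c^2r\sim1$ exactly. So you need stationary phase together with a pointwise bound on the $\ell$-summed amplitude at $k_c$, and that bound is missing.

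The high-energy step fails as proposed as well. A resolvent-identity or Born-series comparison with the free spectral measure does not close for $V=|x|^{-1}$, for three reasons:
\begin{itemize}
\item $V$ does not map $L^{2,-\sigma}$ into $L^{2,\sigma}$ for any $\sigma>1/2$.
\item The first Born kernel $\int\frac{e^{ik(|x-z|+|z-y|)}}{|x-z||z||z-y|}\,dz$ is not absolutely convergent.
\item Coulomb waves carry the phase $\frac{Z}{2k}\log(2kr)$, so they do not become close to free waves at infinity at any fixed energy.
\end{itemize}
The paper's introduction points to exactly this failure. Citing \cite{BTVZ23} does not help with the $\ell$-sum either, since that work treats radial data only.

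The idea that removes all of these problems in the paper is never to decompose in $\ell$. Hostler's closed form sums the partial waves exactly, via Neumann's expansion and the Bessel addition theorem. This gives the spectral measure \eqref{eq:spect} as one integral in $L=|x|+|y|$ and $R$, with the Coulomb effect isolated in the single phase $\frac{Z}{k}\ell(\rho)$. Scaling $p=\sqrt{2t}\,k$ then reduces everything to $|\mathfrak D(a,\mathrm L,\mathrm R)|\le Ca$ with $a=Z\sqrt{2t}$. This follows from $\mathfrak D(0,\mathrm L,\mathrm R)=0$ and a uniform bound on $\partial_a\mathfrak D$. That bound is proved by splitting at $\nu=4\mathrm L K_z'(0)$, using van der Corput and stationary phase in $(y,\nu)$, and averaging in $\theta$ near $z=-1$. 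There is no low/high-energy split; the $Z|t|^{-1}$ term appears simply as $t^{-3/2}\cdot a$.
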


\begin{remark}
The first term dominates on the free scale $Z^2|t|\lesssim 1$, whereas the second term dominates when $Z^2|t|\gg1$. The estimate is obtained from the exact positive-energy spectral representation, a scaling reduction to a dimensionless parameter, and a uniform estimate for the difference between the Coulomb and free oscillatory profiles. The lower bound is proved by radial data whose low-energy spectral amplitude has a nonzero value at zero. Thus the two-regime law in Theorem~\ref{thm:main} is sharp.
\end{remark}

\begin{remark}
The projection $P_{\mathrm{ac}}(H_Z)$ is necessary because of the bound states; without it, the estimate would fail for large $|t|$ because the eigenfunction component does not decay. The $|t|^{-3/2}$ term is the free-scale contribution, whereas the $Z|t|^{-1}$ term is sharp in the long-time regime.
\end{remark}

\subsection{Organization of the paper}

The paper is organized as follows. Section 2 derives the Coulomb resolvent and the positive-energy spectral measure. Section 3 reduces the propagator estimate to a dimensionless oscillatory integral and proves the uniform direct-difference bound. Section 4 proves the threshold lower bound and the long-time sharpness.

\subsection{Notation}

We write $\langle x \rangle = (1 + |x|^2)^{1/2}$. The notation $f \lesssim g$ means that there exists a constant $C > 0$ such that $f \leq C g$ uniformly in the relevant parameters. For functions of several variables, $f(x) \sim g(x)$ means that $c g(x) \leq f(x) \leq C g(x)$ for some constants $C > c > 0$.

\subsection{Acknowledgement}

Q. Jia is supported by the Australian Research Council through grant FL220100072.
The authors are grateful to helpful conversations with Akitoshi Hoshiya and Kouichi Taira.


\section{The Spectral Measure}\label{sec:spectral-measure}
In this section we derive the positive-energy spectral measure from the exact Coulomb Green function. The geometric variables are
\begin{equation*}
L=|x|+|y|,\qquad R=\sqrt{2\bigl(|x||y|+x\cdot y\bigr)},
\end{equation*}
and, for $\rho>0$,
\begin{equation*}
\ell(\rho)=\log\coth\left(\frac{\rho}{2}\right).
\end{equation*}
They satisfy $0\leq R\leq L$ and $L^2-R^2=|x-y|^2$. The main spectral-measure formula is Proposition~\ref{prop:spectral-measure}.

\subsection{Review of the Coulomb Green's Function} 

In this subsection we review the exact three-dimensional Coulomb Green function in the normalization \eqref{eq:HZ-def}. We write $E_q=-q^2$ and introduce the generalized principal quantum number $\nu=Z/(2q)$, so that $q=Z/(2\nu)$. Thus
\begin{equation*}
(H_Z-E_q)G_Z(E_q;x,y)=\delta(x-y).
\end{equation*}
In spherical coordinates,
\begin{equation*}
\begin{cases}
x_1=r\sin\theta\, \cos\phi,\\
x_2=r\sin\theta\, \sin\phi,\\
x_3=r\cos\theta\,
\end{cases}
\end{equation*}
the Dirac delta distribution has the expansion
\begin{equation*}
\delta(x-y)=\frac{\delta(r_1-r_2)}{r_1^2}\sum_{\ell=0}^\infty \sum_{m=-\ell}^{\ell} Y_{\ell m}(\omega_1)\overline{Y_{\ell m}(\omega_2)},
\end{equation*}
where $x=(r_1,\omega_1)$ and $y=(r_2,\omega_2)$, and $Y_{\ell m}$ are spherical harmonics satisfying
\begin{equation*}
\begin{split}
 \vec{L}^2Y_{\ell m}=-\Delta_{\mathbb{S}^{2}}Y_{\ell m}=\ell(\ell+1)Y_{\ell m}.
\end{split}
\end{equation*}
Expanding the Green function $G_Z(E; x,y)$ in spherical harmonics,
\begin{equation*}
G_Z(E; x, y)=\sum_{\ell=0}^\infty g_{\ell}(E; r_1, r_2)\sum_{m=-\ell}^{\ell}Y_{\ell m}(\omega_1)\overline{Y_{\ell m}(\omega_2)}.
\end{equation*}
then the radial part satisfies
\begin{equation*}
\Big[\frac1{r_1^2}\partial_{r_1}\big(r_1^2\partial_{r_1}\big)-\frac{\ell(\ell+1)}{r_1^2}+\frac{Z}{r_1}-q^2\Big]g_\ell(E_q;r_1,r_2)=-\frac{\delta(r_1-r_2)}{r_1^2}.
\end{equation*}
Equivalently,
\begin{equation*}
E(\nu)=-\frac{Z^2}{4\nu^2}.
\end{equation*}
Let $\omega=\frac{Z}{2\nu}$. The Sturmian expansion of $g_\ell(E(\nu);r_1,r_2)$ is (see, for example, \cite[(2.6)]{SD91})
\begin{equation*}
\begin{split}
 g_\ell(E(\nu); r_1, r_2) &= (2\omega)^{2\ell+1}(r_1r_2)^\ell e^{-\omega(r_1+r_2)} \\
 &\quad\times\sum_{k=0}^{\infty} \frac{k!}{\Gamma(2\ell+2+k)(\ell+1+k-\nu)} L_k^{2\ell+1}(2\omega r_1)L_k^{2\ell+1}(2\omega r_2),
\end{split}
\end{equation*}
where $L^\alpha_{n}(x)$ is a generalized Laguerre polynomial,
\begin{equation*}
L^\alpha_{n}(r)=\frac{\Gamma(\alpha+1+n)}{\Gamma(\alpha+1)n!}\,{}_1F_1(-n;\alpha+1;r)
\end{equation*}
with
\begin{equation*}
(\alpha)_n=\frac{\Gamma(\alpha+n)}{\Gamma(\alpha)},\quad
{}_1F_1(\alpha;\beta;x)=\sum_{n=0}^{\infty}\frac{(\alpha)_n}{(\beta)_n}\frac{x^n}{n!}.
\end{equation*}
This expansion explicitly reveals the poles of the Green function at $\nu=\ell+1+k=n$, corresponding to the bound-state energies
\begin{equation*}
E_n=-\frac{Z^2}{4n^2},\qquad n=1,2,\ldots.
\end{equation*}
Furthermore, the Sturmian expansion yields Hostler's integral representation \cite{Hos64}:
\begin{equation*}
\begin{aligned}
 g_\ell(E(\nu); r_1, r_2)
 &=\frac{1}{\sqrt{r_1 r_2}} \int_0^\infty
 \left(\coth \frac{\rho}{2}\right)^{2\nu}
 \exp\left(-\frac{Z(r_1+r_2)}{2\nu}\cosh \rho\right)\\
 &\qquad\times I_{2\ell+1}\left(\frac{Z\sqrt{r_1 r_2}}{\nu}\sinh \rho\right)\,d\rho.
\end{aligned}
\end{equation*}
To reconstruct the three-dimensional Green function, we insert the spherical-harmonic expansion:
\begin{align*}
 G_Z(E; x, y) =\sum_{\ell=0}^{\infty}g_{\ell}(E(\nu);r_1,r_2)\frac{2\ell+1}{4\pi}P_{\ell}(\hat{x}\cdot\hat{y}),\quad \hat{x}=\frac{x}{|x|}.
\end{align*}
where $P_\ell$ is the Legendre polynomial of degree $\ell$. A key ingredient in summing this series is Neumann's expansion \cite{Hos64}:
\begin{equation*}
\begin{split}
(\tfrac12 kz)^{\mu-\nu} I_{\nu}(kz)=&k^{\mu} \sum_{j=0}^\infty \frac{\Gamma(\mu+j)}{j!\,\Gamma(\nu+1)}(2j+\mu)\\
&\times {_2F_1}(-j,j+\mu,\nu+1;k^2)(-1)^j I_{2j+\mu}(z),
\end{split}
\end{equation*}
with $\mu,\nu,\mu-\nu\neq -1,-2,-3,\ldots$. Applying this with $\mu=1$, $\nu=0$, and the relation
$$P_\ell(z)=(-1)^\ell {}_2F_1(-\ell,\ell+1,1;(1+z)/2),$$ which gives
\begin{align*}
\frac{\rho}{2} I_0\left(\rho \sqrt{\frac{1+\tau}{2}}\right) = \sum_{\ell=0}^\infty (2\ell+1) P_\ell(\tau) I_{2\ell+1}(\rho),
\end{align*}
where $\tau=\hat{x}\cdot\hat{y}$. Substitution into the partial-wave expansion gives the Hostler representation below. Substituting the radial integral representation into the partial-wave expansion and applying the Bessel addition theorem, one obtains the following closed form, which is precisely the Hostler representation in \cite[(2.20)]{SD91}:
\begin{equation*}
\begin{aligned}
&G_Z(E; x, y) \\
&=\frac{Z}{8\pi\nu} \int_0^\infty
 \left(\coth \frac{\rho}{2}\right)^{2\nu}\sinh \rho
 \exp\left(-\frac{Z(|x|+|y|)}{2\nu}\cosh \rho\right)\\
&\qquad\times I_0\left(\frac{Z\sinh\rho}{2\nu}\sqrt{2|x||y|(1+\hat{x}\cdot\hat{y})}\right)\,d\rho\\
&=\frac{Z}{8\pi\nu} \int_0^\infty
 \left(\coth \frac{\rho}{2}\right)^{2\nu}\sinh \rho
 \exp\left(-\frac{ZL}{2\nu}\cosh \rho\right) I_0\left(\frac{Z\sinh\rho}{2\nu}R\right)\,d\rho\\
&=\frac{Z}{8\pi\nu} \int_0^\infty
 \exp(2\nu\ell(\rho))\sinh \rho
 \exp\left(-\frac{ZL}{2\nu}\cosh \rho\right) I_0\left(\frac{Z\sinh\rho}{2\nu}R\right)\,d\rho
\end{aligned}
\end{equation*}
Setting $q=Z/(2\nu)=\sqrt{-E(\nu)}$, we obtain
\begin{equation}
\label{eq:green}
\begin{split}
G_Z(E; x,y) =\frac{q}{4\pi}\int_0^\infty \sinh \rho\,e^{-qL\cosh\rho} I_0\!\left(qR\sinh\rho\right) e^{(Z/q)\ell(\rho)}\,d\rho.
\end{split}
\end{equation}

\begin{proposition}
\label{prop:hostler-resolvent}
 Let $Z>0$. For $q=\alpha-i\beta$ with $\alpha,\beta>0$ and $\operatorname{Re}(Z/(2q))<1$, set
$E_q=-q^2$ (so $\operatorname{Im}E_q>0$), $s(u)=\sqrt{u^2-1}$, and
$\ell_u=\tfrac12\log\tfrac{u+1}{u-1}$. For $x,y\in\R^3$, the Schwartz kernel $G_Z(E_q;x,y)$ of the resolvent $(H_Z-E_q)^{-1}$ is the distribution
\begin{align*}
G_Z(E_q; x,y)
=
\frac{e^{-q|x-y|}}{4\pi|x-y|}
+
\frac{q}{4\pi}\int_1^\infty
e^{-qLu}\,I_0\!\bigl(qR\,s(u)\bigr)
\Bigl[\exp\Bigl(\tfrac{Z}{q}\ell_u\Bigr)-1\Bigr]\,du
\end{align*}
\end{proposition}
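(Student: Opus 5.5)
The plan is to start from the Hostler closed form \eqref{eq:green}, substitute $u=\cosh\rho$, and then isolate the $Z=0$ term, which should reproduce the free resolvent. With $u=\cosh\rho$ we have $du=\sinh\rho\,d\rho$ and $\sinh\rho=s(u)$. We also have $\coth(\rho/2)=\sqrt{(u+1)/(u-1)}$, so that $\ell(\rho)=\ell_u$. Hence \eqref{eq:green} becomes
\begin{equation*}
G_Z(E_q;x,y)=\frac{q}{4\pi}\int_1^\infty e^{-qLu}\,I_0\bigl(qR\,s(u)\bigr)\,e^{(Z/q)\ell_u}\,du .
\end{equation*}
We write $e^{(Z/q)\ell_u}=1+\bigl[e^{(Z/q)\ell_u}-1\bigr]$. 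The second piece gives exactly the stated correction term, so it remains to show that the first piece equals $e^{-q|x-y|}/(4\pi|x-y|)$.

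For that identity, I would expand $I_0(z)=\sum_k (z/2)^{2k}/(k!)^2$ and integrate term by term, or simply quote the classical Laplace-transform formula
\begin{equation*}
\int_1^\infty e^{-au}I_0\bigl(b\sqrt{u^2-1}\bigr)\,du=\frac{e^{-\sqrt{a^2-b^2}}}{\sqrt{a^2-b^2}},\qquad \operatorname{Re}a>|\operatorname{Re} b|,
\end{equation*}
valid by analytic continuation in $a$ and $b$. We take $a=qL$ and $b=qR$, so that $\sqrt{a^2-b^2}=q\sqrt{L^2-R^2}=q|x-y|$, using $L^2-R^2=|x-y|^2$ and the branch with positive real part since $\operatorname{Re}q>0$. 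This gives $\frac{q}{4\pi}\cdot\frac{e^{-q|x-y|}}{q|x-y|}$, which is the free Green function. For $x,y$ in general position ($R<L$), convergence is clear: $|I_0(qRs(u))|\le e^{\alpha R s(u)}\le e^{\alpha R u}$, and $\alpha(L-R)>0$.

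The main obstacle is justifying \eqref{eq:green} itself on the stated region of $q$, and handling convergence of the correction integral. The Hostler formula was derived for real $q>0$ with $\nu=Z/(2q)<1$, since $(\coth\frac{\rho}{2})^{2\nu}\sim(2/\rho)^{2\nu}$ near $\rho=0$ and we need $\int_0 \rho^{1-2\nu}$-type integrability. In the $u$-variable, $e^{(Z/q)\ell_u}\sim C(u-1)^{-Z/(2q)}$ near $u=1$, which is integrable exactly when $\operatorname{Re}(Z/(2q))<1$. This explains the hypothesis, and the bracket $[\,e^{(Z/q)\ell_u}-1\,]$ inherits that integrability. At $u\to\infty$ we have $\ell_u\sim 1/u$, so the bracket is $O(1/u)$, and the factor $e^{-q(L-R)u}$ provides decay.

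So I would first prove the formula for real $q>Z/2$, where the Sturmian expansion and Hostler's derivation apply. Both sides are then analytic in $q$ on the connected region $\{\operatorname{Re}q>0,\ \operatorname{Re}(Z/(2q))<1\}$. Local uniform convergence of the integrals, with $x\ne y$ and $R<L$ fixed, is justified by dominated convergence. The resolvent kernel is analytic in $E$ off the spectrum, so uniqueness of analytic continuation extends the identity to $q=\alpha-i\beta$.

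Finally, the kernels are locally integrable in $y$: there is a $|x-y|^{-1}$ singularity from the free part, and the correction is bounded near the diagonal up to a logarithm. The two sides therefore agree as distributions, the remaining set $R=L$ having measure zero.
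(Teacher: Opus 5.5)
Your proposal is correct and follows essentially the same route as the paper: substitute $u=\cosh\rho$ in \eqref{eq:green}, split $e^{(Z/q)\ell_u}=1+[e^{(Z/q)\ell_u}-1]$, and identify the first piece with the Yukawa kernel using the Laplace-transform identity for $I_0$ together with $L^2-R^2=|x-y|^2$. The only difference is how you reach complex $q$: you extend the identity from real $q>Z/2$ by analytic continuation of the resolvent on the connected region $\{\operatorname{Re}q>0,\ \operatorname{Re}(Z/(2q))<1\}$, whereas the paper cites Hostler and Swainson--Drake and sketches a direct check of $(H_Z+q^2)G_Z=\delta$, so your route identifies the kernel with the resolvent without needing a separate uniqueness argument.
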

\begin{remark}

Put $\nu=Z/(2q)$. Near the endpoint $u=1$, write $u=1+v$. Then
\begin{align*}
\ell_{1+v}=\frac12\log\frac{2}{v}+O(v),\qquad
\exp\!\left(\frac{Z}{q}\ell_{1+v}\right)=\left(\frac{2}{v}\right)^{\nu}(1+O(v)).
\end{align*}
Since $I_0(qR\sqrt{u^2-1})=1+O(v)$, the correction integrand is locally of size $v^{-\operatorname{Re}\nu}$. Therefore the endpoint integral converges precisely when
\begin{align*}
\operatorname{Re}\nu<1,
\qquad\text{equivalently}\qquad
\operatorname{Re}\frac{Z}{2q}<1.
\end{align*}
At infinity, $\ell_u=u^{-1}+O(u^{-3})$, so the free subtraction contributes an additional factor $u^{-1}$; together with $\operatorname{Re}q>0$ this gives convergence and the logarithmic spatial bound used below.

The same condition has a spectral meaning. The poles occur at $\nu=n$ and hence at $E_n=-Z^2/(4n^2)$. For real $q>0$, $\operatorname{Re}\nu<1$ means $q>Z/2$, so $E_q<-Z^2/4$, strictly below the ground-state energy. For complex $q=\alpha-i\beta$ with $\alpha,\beta>0$,
\begin{align*}
\operatorname{Im}E_q=2\alpha\beta>0,
\end{align*}
so self-adjointness places $E_q$ in the resolvent set. In the limiting-absorption limit $q\to-ik$, one has $E_q\to k^2+i0$ and $\nu\to iZ/(2k)$; no bound-state pole is crossed. Thus endpoint integrability and analyticity are controlled by $\operatorname{Re}(Z/(2q))<1$, while $\operatorname{Im}E_q>0$ selects the upper resolvent branch.
\end{remark}

\begin{proof}
This is the Coulomb Green function of Hostler \cite{Hos64} and Swainson--Drake \cite{SD91}. The first term is the free Yukawa resolvent of $-\Delta$ at $E_q$; the factor $\exp((Z/q)\ell_u)-1$ subtracts that free part. From \eqref{eq:green}, we obtain
\begin{equation*}
\begin{aligned}
G_Z(E_q; x,y) &= \frac{q}{4\pi}\int_0^\infty \sinh\rho\,e^{-qL\cosh\rho}I_0\!\left(qR\sinh\rho\right)\,d\rho\\
&+\frac{q}{4\pi}\int_0^\infty \sinh\rho\,e^{-qL\cosh\rho}I_0\!\left(qR\sinh\rho\right)\left[e^{(Z/q)\ell(\rho)}-1\right]d\rho.
\end{aligned}
\end{equation*}
Make the substitution $u=\cosh\rho$, so that $du=\sinh\rho\,d\rho$ and $\sinh\rho = \sqrt{u^2-1}$. Then the first term equals
\begin{align*}
\frac{q}{4\pi}\int_1^\infty e^{-qLu} I_0\!\left(qR\sqrt{u^2-1}\right)du=\frac{e^{-q|x-y|}}{4\pi|x-y|},
\end{align*}
where we use the following standard integral identity, initially for real $q>0$ and then for $\operatorname{Re}q>0$ by analytic continuation,
\begin{align*}
\int_1^\infty e^{-a u} I_0\!\left(b\sqrt{u^2-1}\right) du = \frac{e^{-\sqrt{a^2-b^2}}}{\sqrt{a^2-b^2}}, \quad a > |b|,
\end{align*}
and
\begin{align*}
L^2 - R^2 = (|x|+|y|)^2 - 2|x||y|(1+\hat{x}\cdot\hat{y}) = |x-y|^2.
\end{align*}
Recall the $\theta$-average of the Bessel representation (the Schl\"afli representation of $I_0$),
\begin{align*}I_0(z)=\frac1\pi\int_0^\pi e^{z\cos\theta}\, d\theta\end{align*}
Thus the factor $q/(4\pi)$ in \eqref{eq:green} becomes $q/(4\pi^2)$ after angular averaging. Equivalently, if the angular integral is written over $[0,2\pi]$, its coefficient is $q/(8\pi^2)$. The second term becomes
\begin{equation}\label{eq:DZ}
\begin{aligned}
D_Z(E_q;x,y)&:=G_Z(E_q; x,y)-\frac{e^{-q|x-y|}}{4\pi|x-y|}\\&=\frac{q}{4\pi^2}\int_{0}^{\pi}\int_1^\infty
e^{-qh_\theta(u)}\bigl[B_q(u)-1\bigr]\,du\,d\theta,
\end{aligned}
\end{equation}
where
\begin{align*}\qquad
h_\theta(u)=Lu-Rs(u)\cos\theta,\quad B_q(u)=e^{(Z/q)\ell_u}.\end{align*}
For completeness, we record the correction estimate used in the limiting-absorption argument.
Split the $u$-integral at $u=2$. For $u\ge2$, $\ell_u=u^{-1}+O(u^{-3})$ gives
$|B_q(u)-1|\le C/u$ on compact subsets of the admissible $q$-region. Moreover,
\begin{align*}
h_\theta(u)\ge(L-R)u+\frac{R}{2u}+cRu\theta^2.
\end{align*}
After the $\theta$-integration, the resulting $u$-integral is bounded by
\begin{align*}
\int_2^\infty e^{-c(L-R)u}(1+Ru)^{-1/2}u^{-1}\,du
\le C\bigl(1+|\log L|\bigr),
\end{align*}
with the logarithm occurring only at the joint Coulomb singularity $L=R=0$. On the compact interval
$1\le u\le2$, the endpoint estimates for the modified Bessel functions, valid uniformly for $q$
in compact subsets of $\{\operatorname{Re}q>0\}$,
\begin{align*}
|I_0((\alpha-i\beta)w)|\le Ce^{\alpha w}(1+w)^{-1/2},\qquad
|I_1((\alpha-i\beta)w)|\le Ce^{\alpha w}\min\{w,(1+w)^{-1/2}\},
\end{align*}
bound the $u$-integrand by an integrable function on $[1,2]$. The same estimates, with the Abel
regulator retained, pass to the boundary $q\to-ik$ and give
\begin{align*}
|D_Z(E_q;x,y)|\le C\bigl(1+|\log L|\bigr)
\end{align*}
uniformly on compact spatial subsets in the corresponding Abel sense; the only spatial singularity is
the logarithmic Coulomb singularity at $L=0$. The verification
$(H_Z+q^2)G_Z=\delta(x-y)$ is the standard one for this representation (the
off-diagonal differential identity together with a boundary-flux argument at $x=y$ and $x=0$, using the
gradient bound $|\nabla_x D_Z(E_q; x,y)|\le C(1/L+1/|x-y|)$ from the same $I_0,I_1$ estimates); see
\cite{Hos64,SD91}.
\end{proof}

\subsection{The spectral measure}
\label{subsec:spectral-measure}
In this subsection we use the Green function above to derive the spectral-measure representation. We first construct the boundary values of the resolvent.
\begin{proposition}[Boundary values of the resolvent]\label{prop:boundary-values}
For $k>0$, the upper and lower boundary values
\begin{align*}
G_Z^\pm(k;x,y)
=
\lim_{\epsilon\downarrow0}\bigl(H_Z-(k^2\pm i\epsilon)\bigr)^{-1}(x,y),
\end{align*}
are the distributions
\begin{align*}
G_Z^+(k;x,y)
=-\frac{ik}{4\pi}
\left[
\int_0^\infty
\sinh(\rho)\,
J_0\!\bigl(kR\sinh(\rho)\bigr)
\exp\!\left(i kL\cosh(\rho)+i\frac{Z}{k}\ell(\rho)\right)
\,d\rho
\right],
\end{align*}
and $G_Z^-(k;x,y)=\overline{G_Z^+(k;y,x)}$.
Here $J_0$ is the Bessel function of order zero.
\end{proposition}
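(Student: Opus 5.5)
Starting from \eqref{eq:green} (equivalently Proposition~\ref{prop:hostler-resolvent}), we write the resolvent at $E=k^2+i\epsilon$ as $E=-q^2$. The branch with $\operatorname{Re}q>0$ is
\begin{align*}
q=q_\epsilon=-i\sqrt{k^2+i\epsilon}=\alpha_\epsilon-i\beta_\epsilon,\qquad \alpha_\epsilon\downarrow0,\quad \beta_\epsilon\to k .
\end{align*}
Then $Z/q_\epsilon\to iZ/k$, and $\operatorname{Re}(Z/(2q_\epsilon))=Z\alpha_\epsilon/(2|q_\epsilon|^2)\to0<1$, so Proposition~\ref{prop:hostler-resolvent} applies for all small $\epsilon$. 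Substituting formally $q\to-ik$ in \eqref{eq:green} gives the following pieces:
\begin{align*}
e^{-qL\cosh\rho}&\to e^{ikL\cosh\rho}, &
I_0(qR\sinh\rho)&=I_0(-ikR\sinh\rho)=J_0(kR\sinh\rho),\\
e^{(Z/q)\ell(\rho)}&\to e^{i(Z/k)\ell(\rho)}, &
\frac{q}{4\pi}&\to-\frac{ik}{4\pi}.
\end{align*}
Together these are exactly the claimed formula for $G_Z^+$. So the content of the proof is justifying the passage to the limit, and interpreting the resulting integral as an oscillatory (Abel-regularized) integral, which is what ``distribution'' means in the statement.

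\textbf{Step 1: the free part.} I split as in \eqref{eq:DZ} into the free Yukawa kernel and $D_Z$. The free part converges to $e^{ik|x-y|}/(4\pi|x-y|)$, the outgoing free resolvent, by direct inspection. The same Laplace-type identity with $a=-ikL$, $b=-ikR$, taken in the Abel sense, shows that this equals the $Z=0$ version of the claimed integral.

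\textbf{Step 2: the correction $D_Z$.} This is the main obstacle, because at $\epsilon=0$ the integrand is not absolutely integrable. The difficulty sits at both ends.
\begin{itemize}
\item At $u=1$ the factor is $(2/v)^{\nu}$ with $\nu=iZ/(2k)$. This is bounded, so that end is harmless.
\item At $u\to\infty$ the factor $e^{ikLu}$ oscillates. Meanwhile $|J_0(kRs(u))|\sim(Ru)^{-1/2}$ and $|B-1|\lesssim u^{-1}$, so the integrand decays only like $u^{-3/2}$ when $R>0$. That is in fact absolutely integrable. When $R=0$ the integrand decays only like $u^{-1}$ and requires oscillation.
\end{itemize}
My plan is to use the bound already recorded in the proof of Proposition~\ref{prop:hostler-resolvent}, namely $|D_Z(E_{q_\epsilon};x,y)|\le C(1+|\log L|)$. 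This bound is uniform in $\epsilon$, and the constants depend only on compact sets of $q$ with $\operatorname{Re}q\ge0$, which is the ``Abel regulator retained'' remark. The uniform bound supplies a dominating function that is locally integrable in $(x,y)$. Pointwise convergence of the $u$-integrals then follows from dominated convergence on $[1,2]$, and on $[2,\infty)$ from one integration by parts in $u$ using $e^{-q_\epsilon Lu}$. The integration by parts gains a factor $u^{-2}$, uniformly in $\epsilon$, as long as $L>0$. Dominated convergence in the spatial variables then gives convergence of $\langle (H_Z-k^2-i\epsilon)^{-1}f,g\rangle$ for test functions $f,g$, i.e.\ convergence in $\mathcal D'$. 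Changing variables back from $u$ to $\rho$ gives the stated $\rho$-form.

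\textbf{Step 3: the lower boundary value.} For $G_Z^-$ I use self-adjointness: $(H_Z-\bar z)^{-1}=((H_Z-z)^{-1})^*$. At the kernel level this gives $G(\bar z;x,y)=\overline{G(z;y,x)}$, and passing to the limit $\epsilon\downarrow0$ yields $G_Z^-(k;x,y)=\overline{G_Z^+(k;y,x)}$. In fact the kernel is symmetric in $x,y$, since $L$ and $R$ are.
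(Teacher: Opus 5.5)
Your proposal is correct and follows essentially the same route as the paper: the limit $q_\epsilon\to-ik$ in the free-subtracted Hostler representation, local integrability of the free kernel and of the logarithmic bound on $D_Z$ to pass the limit under the pairing with test functions, recombination with the $Z=0$ Hostler representation of $e^{ik|x-y|}/(4\pi|x-y|)$, the substitution $u=\cosh\rho$, and adjointness for $G_Z^-$. One small caution: the $\epsilon$-uniform dominating function should come from the complex Bessel bound $|I_0(w)|\le Ce^{|\operatorname{Re} w|}(1+|w|)^{-1/2}$ with $|w|\approx kRs(u)$, because the decay in the paper's estimate comes from $|e^{-qh_\theta(u)}|=e^{-\operatorname{Re}q\,h_\theta(u)}$ and degenerates as $\operatorname{Re}q_\epsilon\to0$. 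The Bessel bound gives $C\bigl(1+\log_+(1/R)\bigr)$, which is still locally integrable on $\R^6$, so your integration by parts on $[2,\infty)$ is needed only on the null set $\{R=0\}$.
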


\begin{proof} We first consider $G_Z^+(k;x,y)$.
Take $q=\alpha-i\beta$ with $\beta\to k$ and $\alpha\downarrow0$, so that $q\to-ik$ and
$$E_q=-q^2\to k^2$$ from the upper half-plane. By Proposition~\ref{prop:hostler-resolvent}, the
free term tends to the outgoing free Green function $$\frac{e^{ik|x-y|}}{4\pi|x-y|},$$ and the correction,
with $q$ replaced by $-ik$, is\footnote{The notation here is a bit different from \eqref{eq:DZ}, here we replace $E_q$ by $q=-ik$ to emphasize the role of $k$.}
\begin{align*}
D_Z(-ik;x,y)
=
-\frac{ik}{4\pi}\int_1^\infty
e^{ikLu}\,J_0\!\bigl(kR\,s(u)\bigr)
\Bigl[\exp\Bigl(\tfrac{iZ}{k}\ell_u\Bigr)-1\Bigr]\,du ,
\end{align*}
the convergence being distributional in the common free-subtracted Abel prescription (the raw
integral is not asserted to converge pointwise; this is exactly why the free subtraction is
retained).

The free term is locally integrable in $(x,y)\in\R^6$, and the Abel boundary estimate for the
correction is locally integrable as well. Consequently, after pairing with a compactly supported
test function in $(x,y)$, the Abel limit may be taken under the pairing. This is the precise sense in
which the displayed boundary value is understood.

The free term is the value of the same $u$-integral with the factor $1$ in place of
$\exp((iZ/k)\ell_u)$ (equivalently, the free outgoing Green function
$e^{ik|x-y|}/(4\pi|x-y|)$ has the Hostler representation
$$-\frac{ik}{4\pi}\int_1^\infty e^{ikLu}J_0(kR s(u))\,du$$ (i.e., the $Z=0$ case of
Proposition~\ref{prop:hostler-resolvent})), 
so recombining the free term with the correction assigns
the single full Abel representative
\begin{align*}
G_Z^+(k;x,y)
=-\frac{ik}{4\pi}\int_1^\infty
e^{ikLu}\,J_0\!\bigl(kR\,s(u)\bigr)
\exp\Bigl(\tfrac{iZ}{k}\ell_u\Bigr)\,du .
\end{align*}
Substituting $u=\cosh(\rho)$ (whence $du=\sinh(\rho)\,d\rho$, $s(u)=\sinh(\rho)$, and
$\ell_u=\ell(\rho)=\log\coth(\rho/2)$) yields the displayed $\rho$-integral. Because the coefficient
of $\ell(\rho)$ in the exponent is $+iZ/k$, the attractive Coulomb logarithmic phase is
$+(Z/k)\ell(\rho)$. Finally the lower boundary value is the adjoint of the upper one; since
$L$, $R$, $\sinh(\rho)$, and $\ell(\rho)$ are symmetric in $x$ and $y$, it equals
$\overline{G_Z^+(k;y,x)}$.
\end{proof}

\begin{proposition}
\label{prop:spectral-decomp}
For $Z>0$, the positive spectral part of the Coulomb Hamiltonian $H_Z=-\Delta-Z|x|^{-1}$ on $L^2(\R^3)$ is purely absolutely continuous. Moreover, on its absolutely continuous subspace, $H_Z$ is unitarily equivalent to the operator of multiplication by $k^2$ ($k>0$) via the Hankel--Whittaker transforms of Derezinski and Richard \cite[Theorem 3.16 and Eq.~(3.43)]{DR18}.
\end{proposition}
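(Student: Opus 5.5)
The plan is to decompose $H_Z$ into partial waves, prove absolute continuity of the positive spectrum channel by channel, and then quote \cite{DR18} for the diagonalization.

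\textbf{Step 1: reduction to radial operators.} Since $H_Z$ commutes with rotations, $L^2(\R^3)=\bigoplus_{\ell\ge0}L^2(\R_+,r^2dr)\otimes\mathcal{H}_\ell$, where $\mathcal{H}_\ell$ is spanned by $Y_{\ell m}$, $|m|\le\ell$. Conjugating by $u(r)=r\,g(r)$, $H_Z$ becomes the direct sum of the one-dimensional Whittaker-type operators
\begin{align*}
h_\ell=-\partial_r^2+\frac{\ell(\ell+1)}{r^2}-\frac{Z}{r}\quad\text{on } L^2(\R_+,dr),
\end{align*}
each taken with its Friedrichs (form) realization. These agree with the form realization of $H_Z$ restricted to the channel. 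In the notation of \cite{DR18}, $h_\ell=H_{\beta,m}$ with $m=\ell+\tfrac12$ and Coulomb coupling $\beta=-Z$. It therefore suffices to treat each $h_\ell$ separately and then take the direct sum.

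\textbf{Step 2: positive spectrum in each channel.} For $k>0$, the equation $h_\ell u=k^2u$ has a solution regular at $0$. It is given by the Whittaker function $\mathcal{M}_{-iZ/(2k),\,\ell+1/2}(-2ikr)$, up to normalization. This solution is real, nonzero, and oscillates at infinity like $\sin\bigl(kr+\tfrac{Z}{2k}\log(2kr)+\sigma_\ell(k)\bigr)$. Because of this asymptotic it is not in $L^2$, so $h_\ell$ has no eigenvalue in $(0,\infty)$.

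The endpoint $0$ is limit circle nonoscillatory only when $\ell=0$ and limit point otherwise; infinity is limit point. Weyl--Titchmarsh theory then gives a spectral measure for $h_\ell$, which is absolutely continuous on $(0,\infty)$. Its density is $|\text{Jost}(k)|^{-2}$, and the Jost function $\Gamma(\ell+1-iZ/(2k))$ never vanishes for $k>0$. Equivalently, one can use the boundary values of Proposition~\ref{prop:boundary-values}: the imaginary part of $G^+_Z$ is finite on $(0,\infty)$, which rules out singular continuous spectrum there.

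\textbf{Step 3: citation and gluing.} \cite[Theorem 3.16, Eq.~(3.43)]{DR18} constructs the Hankel--Whittaker transform $\mathcal{F}_{\beta,m}$. It is an isometry from $\mathbf{1}_{(0,\infty)}(h_\ell)L^2$ onto $L^2(\R_+,dk)$ that intertwines $h_\ell$ with multiplication by $k^2$. Taking the direct sum over $(\ell,m)$ and composing with the spherical-harmonic decomposition gives the claimed unitary equivalence. The negative spectrum is pure point, consisting of the eigenvalues $E_n$, with $0$ as an accumulation point but not itself an eigenvalue. Hence the absolutely continuous subspace is exactly $\mathbf{1}_{(0,\infty)}(H_Z)L^2$, and it carries no singular continuous part.

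\textbf{Main obstacle.} The hard step is matching conventions. The attractive sign requires the parameter range in \cite{DR18} that allows $\beta<0$. One must also check that the self-adjoint extension in \cite{DR18} for $m=\tfrac12$ (the case $\ell=0$) coincides with the form realization. I would settle this by comparing the form domains: the Friedrichs extension selects the regular solution $\sim r$ at $0$, and this is the choice made in \cite{DR18}.
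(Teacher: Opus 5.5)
Your proposal is correct and follows the paper's route. The paper gives no argument beyond invoking the Derezi\'nski--Richard diagonalization, and your partial-wave reduction with a channelwise appeal to \cite{DR18} makes that invocation explicit; the Weyl--Titchmarsh check in Step~2 is harmless but redundant, since unitary equivalence with multiplication by $k^2$ already forces absolute continuity. One cosmetic sign slip: $\mathcal{M}_{-iZ/(2k),\ell+1/2}(-2ikr)$ solves the \emph{repulsive} equation, while the attractive regular solution is $\mathcal{M}_{iZ/(2k),\ell+1/2}(-2ikr)$, and with the Coulomb term written as $-\beta/x$ in \cite{DR18} the attractive case is $\beta=Z>0$, not $\beta=-Z$ — but since \cite{DR18} treats all real $\beta$, nothing in your argument changes.
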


We next derive the spectral-measure representation.
\begin{proposition}[Spectral measure]
\label{prop:spectral-measure}
For $k>0$, the positive continuous spectral measure of $H_Z$ is the distribution
\begin{equation}\label{eq:spect}
dE(k;x,y)
=-\frac{k^2}{2\pi^2}
\left[
\int_0^\infty
\sinh(\rho)\,
J_0\!\bigl(kR\sinh(\rho)\bigr)
\cos\!\left(kL\cosh(\rho)+\frac{Z}{k}\ell(\rho)\right)
\,d\rho
\right]dk .
\end{equation}
In particular, when $Z=0$, formula \eqref{eq:spect} reduces to the spectral measure of $-\Delta$.
\end{proposition}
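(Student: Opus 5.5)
The plan is to use Stone's formula together with the boundary values from Proposition~\ref{prop:boundary-values}. For a Borel set $I\subset(0,\infty)$ of energies, Stone's formula gives
\begin{align*}
E(I)=\lim_{\epsilon\downarrow0}\frac{1}{2\pi i}\int_I\Bigl[(H_Z-\lambda-i\epsilon)^{-1}-(H_Z-\lambda+i\epsilon)^{-1}\Bigr]\,d\lambda ,
\end{align*}
where endpoint corrections are absent because, by Proposition~\ref{prop:spectral-decomp}, the positive spectrum is purely absolutely continuous. Changing variables $\lambda=k^2$, $d\lambda=2k\,dk$, the density in $k$ becomes
\begin{align*}
dE(k;x,y)=\frac{k}{\pi i}\bigl[G_Z^+(k;x,y)-G_Z^-(k;x,y)\bigr]\,dk .
\end{align*}
This identity is understood distributionally. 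Pairing with compactly supported test functions in $(x,y)$ lets us exchange the $\epsilon$-limit with the pairing, using the locally integrable Abel bound $|D_Z|\le C(1+|\log L|)$ and the local integrability of the free term recorded in the proof of Proposition~\ref{prop:boundary-values}. We also need the $\epsilon$-limit to be locally uniform in $k$ on compact subsets of $(0,\infty)$, so that it can be taken inside the $\lambda$-integral. I would obtain this from the same Bessel estimates, which hold uniformly for $q$ in compact subsets of the admissible region.

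Next comes the algebra. Since $L$, $R$, $\sinh\rho$ and $\ell(\rho)$ are symmetric in $x,y$ and real, and $J_0$ is real on the real axis, Proposition~\ref{prop:boundary-values} gives $G_Z^-(k;x,y)=\overline{G_Z^+(k;x,y)}$. Hence $G_Z^+-G_Z^-=2i\operatorname{Im}G_Z^+$. Writing $G_Z^+=-\frac{ik}{4\pi}\int_0^\infty \sinh\rho\,J_0(kR\sinh\rho)\,e^{i\Phi}\,d\rho$ with $\Phi=kL\cosh\rho+\frac Zk\ell(\rho)$, we get
\begin{align*}
\operatorname{Im}G_Z^+=-\frac{k}{4\pi}\int_0^\infty\sinh\rho\,J_0(kR\sinh\rho)\cos\Phi\,d\rho .
\end{align*}
Combining this with the prefactor yields $\frac{k}{\pi i}\cdot 2i\cdot\bigl(-\frac{k}{4\pi}\bigr)=-\frac{k^2}{2\pi^2}$, which is exactly \eqref{eq:spect}. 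The $\rho$-integral itself is interpreted in the same free-subtracted Abel sense as before.

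For the case $Z=0$, the phase reduces to $kL\cosh\rho$. Using the Hostler representation of the free outgoing Green function (the $Z=0$ case already noted), the bracket equals $\operatorname{Im}$ of $\frac{e^{ik|x-y|}}{4\pi|x-y|}$ times the constant $-4\pi/k$, that is,
\begin{align*}
-\frac{4\pi}{k}\cdot\frac{\sin(k|x-y|)}{4\pi|x-y|}.
\end{align*}
The formula then gives $\frac{k^2}{2\pi^2}\frac{\sin(k|x-y|)}{k|x-y|}\,dk$, which is the known spectral measure of $-\Delta$ in $\R^3$. As a cross-check, the same value follows from the identity $\int_1^\infty \cos(au)J_0(b\sqrt{u^2-1})\,du=-\sin\sqrt{a^2-b^2}/\sqrt{a^2-b^2}$ in the Abel sense.

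The main obstacle is the justification of the limits. Because the $\rho$-integral does not converge absolutely, the boundary values $G_Z^\pm$ exist only as distributions, and Stone's formula must be applied in the weak (paired) form with locally uniform control in $k$. I would handle this by first restricting to test functions supported away from $x=y$ and $L=0$, and then removing that restriction via the local integrability bounds.
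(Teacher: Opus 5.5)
Your proposal is correct and follows the paper's proof: you apply Stone's formula to the boundary values $G_Z^\pm$ of Proposition~\ref{prop:boundary-values}, use the conjugation symmetry $G_Z^-=\overline{G_Z^+}$, and use the Jacobian $d\lambda=2k\,dk$ to obtain the coefficient $-k^2/(2\pi^2)$. The differences are only in detail: for $Z=0$ you evaluate the $\rho$-integral through the Hostler representation of $e^{ik|x-y|}/(4\pi|x-y|)$, whereas the paper just asserts that value before checking it against the spherical Fourier integral, and you outline a distributional, locally uniform justification of the $\epsilon$-limit that the paper leaves implicit.
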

\begin{remark}
The fact that the positive spectral part of $H_Z$ is purely absolutely continuous, and hence that the measure above is the absolutely continuous spectral measure, follows from the Derezi\'nski--Richard diagonalization in Proposition~\ref{prop:spectral-decomp}.
\end{remark}
\begin{proof}
By Proposition~\ref{prop:boundary-values},
\begin{align*}
G_Z^-(k;x,y)
=
\frac{ik}{4\pi}
\left[
\int_0^\infty
\sinh(\rho)\,
J_0\!\bigl(kR\sinh(\rho)\bigr)
\exp\!\left(-i kL\cosh(\rho)-i\frac{Z}{k}\ell(\rho)\right)
\,d\rho
\right].
\end{align*}
Writing
\begin{align*}
\Phi(k;\rho)=kL\cosh(\rho)+\frac{Z}{k}\ell(\rho)
\end{align*}
and subtracting the two boundary values yields
\begin{align*}
G_Z^+(k;x,y)-G_Z^-(k;x,y)
=-\frac{ik}{2\pi}
\left[
\int_0^\infty
\sinh(\rho)\,
J_0\!\bigl(kR\sinh(\rho)\bigr)\cos(\Phi)\,d\rho
\right].
\end{align*}
For the convention $G_Z(E)=(H_Z-E)^{-1}$, Stone's formula gives, at energy $E=k^2$,
\begin{align*}
dE(k^2)
= \frac{1}{2\pi i}\bigl(G_Z^+(E)-G_Z^-(E)\bigr)\,dk^2.
\end{align*}
Since $E=k^2$ and $dE=2k\,dk$, the scalar coefficient is
\begin{align*}
\frac{1}{2\pi i}\left(-\frac{ik}{2\pi}\right)(2k)
=-\frac{k^2}{2\pi^2},
\end{align*}
which proves the formula \eqref{eq:spect}. 

When $Z=0$, \eqref{eq:spect} becomes
\begin{equation*}
\begin{split}
dE(k;x,y)
&=-\frac{k^2}{2\pi^2}
\left[
\int_0^\infty
\sinh(\rho)\,
J_0\!\bigl(kR\sinh(\rho)\bigr)
\cos\!\left(kL\cosh(\rho)\right)
\,d\rho
\right]dk\\
&=\frac{k}{2\pi^2}\frac{\sin(k|x-y|)}{|x-y|}\,dk
\end{split}
\end{equation*}
which agrees with the spectral measure of $-\Delta$.
\begin{equation*}
\begin{split}
dE(k;x,y)&=\frac{k^2}{(2\pi)^3}\int_{\mathbb{S}^2} e^{ik\omega\cdot(x-y)}\,d\omega \, dk
 =\frac{k}{2\pi^2}\frac{\sin(k|x-y|)}{|x-y|}\,dk.
\end{split}
\end{equation*}
Here we used
\begin{align*}
\int_{\mathbb{S}^2}e^{ik\omega\cdot z}\,d\omega
=4\pi\frac{\sin(k|z|)}{k|z|},
\qquad z=x-y,
\end{align*}
so the factor $4\pi$ from the spherical integral is essential for the coefficient $k/(2\pi^2)$.

\end{proof}

\section{Dispersive estimates for the Schr\"odinger propagator}\label{sec:est-Schrodinger-propagator}

We prove the pointwise estimate for the Schr\"odinger propagator. Since $H_Z$ is self-adjoint, the kernel at negative time is the complex conjugate transpose of the kernel at positive time; it therefore suffices to work with $t>0$. The argument below is uniform in the coupling and in the long-time parameter.

\subsection{Reduction to estimating the difference $\mathfrak{D}$}
Let $H_Z$ be as in \eqref{eq:HZ-def} and let $H_0=-\Delta$. We define the kernel difference
\begin{equation*}
\mathfrak{D}(t;x,y):=(e^{-itH_Z}P_{\mathrm{ac}}(H_Z))(x,y)-e^{-itH_0}(x,y).
\end{equation*}
By Proposition \ref{prop:spectral-measure}, we obtain
\begin{equation}\label{eq:kernel-difference-rho}
\begin{aligned}
&\mathfrak{D}(t;x,y)\\
  &=-\frac{1}{2\pi^2}\int_0^\infty k^2e^{-itk^2}\int_0^\infty
  \sinh(\rho)\,J_0\!\bigl(kR\sinh(\rho)\bigr)\\
  &\qquad\times\Big[\cos\!\Bigl(kL\cosh(\rho)+\frac{Z}{k}\ell(\rho)\Bigr)
  -\cos\!\bigl(kL\cosh(\rho)\bigr)\Big] \,d\rho\,dk.
\end{aligned}
\end{equation}
For $t>0$, set
\begin{align}
\label{eq:scaled-variables}
\overline{x}=\frac{x}{\sqrt{2t}}, \quad &\overline{y}=\frac{y}{\sqrt{2t}}, \quad a=Z\sqrt{2t}, \\ \mathrm{L}=|\overline{x}|+|\overline{y}|=L/\sqrt{2t}, 
\quad &\mathrm{R}= \sqrt{2(|\overline{x}||\overline{y}|+\overline{x}\cdot\overline{y})}=R/\sqrt{2t}. \nonumber
\end{align}
Then
\begin{align*}
\mathrm{L}^2-\mathrm{R}^2=|\overline{x}-\overline{y}|^2,
\end{align*}
and hence $0\leq \mathrm{R}\leq \mathrm{L}$. Make the change of variable $p=\sqrt{2t}\,k$. Then
\begin{align}
k^2\,dk=(2t)^{-3/2}p^2\,dp,\qquad e^{-itk^2}=e^{-ip^2/2}.
\label{eq:momentum-scaling}
\end{align}
By \eqref{eq:scaled-variables}, we have
\begin{align}
|x|+|y|=\sqrt{2t}\,\mathrm{L},\qquad \sqrt{2(|x||y|+x\cdot y)}=\sqrt{2t}\,\mathrm{R},\qquad \frac{Z}{k}\ell(\rho)=\frac{a}{p}\ell(\rho).
\label{eq:scaled-phases}
\end{align}
Substituting \eqref{eq:momentum-scaling} and \eqref{eq:scaled-phases} into \eqref{eq:kernel-difference-rho} gives
\begin{align}
\mathfrak{D}(t;x,y)
&=-\frac{(2t)^{-3/2}}{2\pi^2}\int_0^\infty p^2e^{-ip^2/2}\int_0^\infty \sinh\rho\,J_0(p\mathrm{R}\sinh\rho) \nonumber\\
&\qquad\times\left[\cos(p\mathrm{L}\cosh\rho+\frac{a}{p}\log\coth\frac{\rho}{2})-\cos(p\mathrm{L}\cosh\rho)\right]\,d\rho\,dp.
\label{eq:scaled-kernel-difference}
\end{align}

Setting $s=\ell(\rho)=\log\coth\frac{\rho}{2}$, we have
\begin{align}
\cosh\rho=\coth s,\qquad \sinh\rho=\operatorname{csch}s,\qquad \sinh\rho\,d\rho=-\operatorname{csch}^2s\,ds.
\label{eq:hostler-substitution}
\end{align}
As $\rho$ increases from $0$ to $\infty$, $s$ decreases from $\infty$ to $0$, so reversing the endpoints cancels the minus sign in \eqref{eq:hostler-substitution}.
We denote the right-hand side of \eqref{eq:scaled-kernel-difference}, up to the rescaling, by\footnote{This notation is different from $\mathfrak{D}(t;x,y)$.}
\begin{align}
\mathfrak{D}(a,\mathrm{L},\mathrm{R})
&=\int_0^\infty\int_0^\infty p^2\operatorname{csch}^2s\,e^{-ip^2/2}J_0(p\mathrm{R}\operatorname{csch}s) \nonumber\\
&\qquad\times\left[\cos(p\mathrm{L}\coth s+\frac{as}{p})-\cos(p\mathrm{L}\coth s)\right]\,ds\,dp.
\label{eq:direct-difference}
\end{align}
All oscillatory integrals in this section are understood in the common ordered Abel sense. More precisely, for $\varepsilon,\sigma>0$ and $0<\delta<N$, we first insert the factor
\begin{align*}
e^{-\varepsilon(s+s^{-1})}e^{-\sigma p^2}
\end{align*}
and replace the $p$-integration by $\int_\delta^N$; denote the resulting absolutely convergent integral by $\mathfrak{D}_{\varepsilon,\delta,N,\sigma}(a,\mathrm{L},\mathrm{R})$. The unregularized expression in \eqref{eq:direct-difference} denotes the ordered limit
\begin{align*}
\lim_{\sigma\downarrow0}\lim_{\substack{\delta\downarrow0\\N\uparrow\infty}}
\lim_{\varepsilon\downarrow0}
\mathfrak{D}_{\varepsilon,\delta,N,\sigma}(a,\mathrm{L},\mathrm{R}).
\end{align*}
All differentiations, Fubini interchanges, and changes of variables below are first performed at finite regulators; the estimates are then passed to this ordered limit.
The free Schr\"odinger kernel satisfies
$$\big| e^{-itH_0}(x,y)\big|\leq C|t|^{-3/2}.$$ 
Therefore,
\begin{align}\label{eq:positive-time-reduction}
\big| (e^{-itH_Z}P_{\mathrm{ac}}(H_Z))(x,y)\big|&\leq \big| e^{-itH_0}(x,y)\big|+C|t|^{-\frac32}|\mathfrak{D}(a,\mathrm{L},\mathrm{R})|\\
&\leq C|t|^{-\frac32}\big(1+|\mathfrak{D}(a,\mathrm{L},\mathrm{R})|\big). \nonumber
\end{align}
Therefore, recalling the rescaled variables in \eqref{eq:scaled-variables}, in particular
$a=Z\sqrt{2|t|}$, it suffices to prove
\begin{align}
|\mathfrak{D}(a,\mathrm{L},\mathrm{R})|\leq Ca,\qquad a>0,\qquad 0\leq \mathrm{R}\leq \mathrm{L},
\label{eq:mkD-est}
\end{align}
with a constant uniform in $a,\mathrm{L},\mathrm{R}$. The remainder of this section proves this estimate. We use $\mathrm{L}$ and $\mathrm{R}$ throughout the scaled argument.

\subsection[The case L=0]{{The case $\mathrm{L}=0$}}
We first prove \eqref{eq:mkD-est} when $\mathrm{L}=0$. Then $\mathrm{R}=0$, and the quantity to be estimated is
\begin{align}\label{Da00}
\mathfrak{D}(a,0,0)
&=\int_0^\infty\int_0^\infty p^2\operatorname{csch}^2s\,e^{-ip^2/2}\left[\cos(\frac{as}{p})-1\right]\,ds\,dp.
\end{align}
where we use $J_0(0)=1$.

\begin{proposition}Let $\mathfrak{D}(a,0,0)$ be defined by \eqref{Da00}. Then there exists a constant $C$ independent of $a$ such that
\begin{align}\label{eq:mkD-est'}
|\mathfrak{D}(a,0,0)|\leq Ca.
\end{align}

\end{proposition}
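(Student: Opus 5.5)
The plan is to evaluate the inner $s$-integral in \eqref{Da00} in closed form. This reduces $\mathfrak{D}(a,0,0)$ to a one-dimensional oscillatory $p$-integral carrying an explicit factor $a$. The remaining integral is then bounded uniformly in $a$ by one integration by parts together with a monotonicity (total variation) argument.

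\emph{Step 1 (the $s$-integral).} For $b>0$ set
\begin{align*}
F(b)=\int_0^\infty \operatorname{csch}^2 s\,\bigl(1-\cos(bs)\bigr)\,ds .
\end{align*}
Write $\operatorname{csch}^2 s=-\frac{d}{ds}(\coth s-1)$ and integrate by parts. The boundary terms vanish: at $s=0$ we have $(1-\cos bs)(\coth s-1)=O(s)$, and at $s=\infty$ the factor $\coth s-1$ decays exponentially. This gives
\begin{align*}
F(b)=b\int_0^\infty(\coth s-1)\sin(bs)\,ds=b\Bigl(\frac{\pi}{2}\coth\frac{\pi b}{2}-\frac1b\Bigr)=:b\,g(b),
\end{align*}
where the last equality is the classical Fourier sine transform of $\coth s-1$. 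The function $g$ is smooth and increasing on $(0,\infty)$. It satisfies $g(b)=\frac{\pi^2}{12}b+O(b^3)$ as $b\to0$ and $g(b)\to\frac{\pi}{2}$ exponentially fast as $b\to\infty$.

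With $b=a/p$ (and the $\varepsilon$-regulator removed first, which is legitimate since the $s$-integrand is absolutely integrable), we obtain
\begin{align*}
\mathfrak{D}(a,0,0)=-\int_0^\infty p^2e^{-ip^2/2}\,\frac{a}{p}\,g\Bigl(\frac ap\Bigr)dp
=-a\int_0^\infty p\,e^{-ip^2/2}\,g\Bigl(\frac ap\Bigr)dp .
\end{align*}
This exhibits the factor $a$ explicitly. It therefore remains to bound the last integral by an absolute constant.

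\emph{Step 2 (uniform bound).} Use $p\,e^{-ip^2/2}=i\frac{d}{dp}e^{-ip^2/2}$ and integrate by parts on $[\delta,N]$, keeping the factor $e^{-\sigma p^2}$. The boundary terms are:
\begin{itemize}
\item at $p=N$: $e^{-iN^2/2}g(a/N)\to g(0)=0$;
\item at $p=\delta$: a term tending to $-i\cdot\frac{\pi}{2}$ in modulus.
\end{itemize}
The remaining integral is $-i\int e^{-ip^2/2}\,\partial_p[g(a/p)]\,dp$, plus terms from differentiating $e^{-\sigma p^2}$ that vanish as $\sigma\downarrow0$. Since $g$ is monotone, this integral is bounded by
\begin{align*}
\int_0^\infty \frac{a}{p^2}\,\Bigl|g'\Bigl(\frac ap\Bigr)\Bigr|\,dp=\int_0^\infty |g'(b)|\,db=\frac{\pi}{2},
\end{align*}
uniformly in $a$. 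Hence $|\mathfrak{D}(a,0,0)|\le \pi a$, which gives \eqref{eq:mkD-est'}.

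\emph{Main obstacle.} I expect the main work to be the justification within the ordered Abel prescription rather than the computation itself. Specifically, one must check three things:
\begin{itemize}
\item the $\varepsilon$-limit of the $s$-integral equals $F(a/p)$ uniformly for $p\in[\delta,N]$ (dominated convergence, since $\operatorname{csch}^2 s(1-\cos bs)\lesssim \min(b^2,s^{-2})e^{-2s}$-type bounds are integrable);
\item the integration by parts at finite $\sigma,\delta,N$ is valid;
\item the $\sigma$-derivative terms $\sigma p\,e^{-\sigma p^2}$ contribute $o(1)$, which follows because $g$ is bounded and $\int_0^\infty \sigma p\,e^{-\sigma p^2}dp$ is $O(1)$. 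A better check is needed here, since this alone gives boundedness but not vanishing; in any case boundedness suffices for the estimate.
\end{itemize}
The closed form for $F$ is standard, so I do not expect difficulty there.
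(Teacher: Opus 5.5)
Your argument is correct, but it takes a different route from the paper's.

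\textbf{The paper's route.} It substitutes $p=su$, $y=s^2$ and does the $y$-integral first. This produces the Fresnel transform $W_0(u)=\tfrac12\int_0^\infty A(y)e^{-iyu^2/2}\,dy$ of $A(y)=y\operatorname{csch}^2\sqrt y$, and one integration by parts gives $|W_0(u)|\le C_0u^{-2}$. The remaining integral $\int_0^\infty u^2W_0(u)[\cos(a/u)-1]\,du$ is then bounded in absolute value by splitting at $u=a$ and using $|\cos(a/u)-1|\le\min\{2,\tfrac12(a/u)^2\}$.

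\textbf{Your route.} You keep the original order. The $s$-integral converges absolutely for each $p>0$, and you evaluate it exactly as $(a/p)\,g(a/p)$. All the oscillation is then spent in $p$: one integration by parts against $pe^{-ip^2/2}$, plus the total variation $\int_0^\infty g'=\pi/2$.

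\textbf{What your route buys.} It needs no interchange of integrations. The paper's swap of $u$ and $y$ is legitimate only with the Gaussian regulator in place, since the unregularized integrand has $u^2|\cos(a/u)-1|\to a^2/2$ as $u\to\infty$. Your route also gives the explicit constant $\pi$. Finally, it exhibits the size-$a$ term as the boundary contribution $\tfrac{i\pi}{2}a$ from $p\to0$, i.e.\ from the threshold. Indeed the remaining term is $-ia\int_0^\infty e^{-ia^2/(2b^2)}g'(b)\,db=o(a)$ by Riemann--Lebesgue, so $\mathfrak D(a,0,0)=\tfrac{i\pi}{2}a+o(a)$ as $a\to\infty$. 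This is consistent with the threshold mechanism behind the lower bound in Section~\ref{sec:lower}.

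\textbf{What the paper's route buys.} It uses only soft properties of the amplitude ($A(0)=1$, $A,A'\in L^1$), not an exact Fourier sine transform of $\coth s-1$. It is also phrased in the same $y=s^2$ variables as the $\mathrm L>0$ analysis, where no closed form is available.

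\textbf{Two small points.} First, the $\sigma$-term you flag causes no trouble. The function $e^{-\sigma p^2}g(a/p)$ is itself decreasing in $p$, being a product of nonnegative decreasing functions. So its total variation is $\pi/2$ uniformly in $\sigma$, and you get the bound $\pi a$ directly. Second, $g(b)=\tfrac{\pi}{2}-\tfrac1b+O(e^{-\pi b})$, so $g\to\tfrac{\pi}{2}$ only at rate $1/b$, not exponentially; you never use this, so nothing breaks.
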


\begin{proof}
To estimate \eqref{eq:mkD-est'}, make the change of variables $p=su$ and $y=s^2$. Then $dp\,ds=\tfrac12\,du\,dy$,
$p^2/2=y u^2/2$, and, with $A(y)=y\operatorname{csch}^2(\sqrt y)$,
\begin{align*}
p^2\operatorname{csch}^2(s)\,dp\,ds=\tfrac12 u^2 A(y)\,du\,dy .
\end{align*}
We consider the integration in $y$ first. Define the endpoint Fresnel transform
\begin{align*}
W_0(u)=\frac12\int_0^\infty A(y)\,e^{-iyu^2/2}\,dy .
\end{align*}
Since $A$ is smooth with $A(0)=1$ and $A,A'\in L^1([0,\infty))$ (both decay like a polynomial in
$\sqrt y$ times $e^{-2\sqrt y}$), one has $\sup_{u>0}u^2|W_0(u)|\le C_0$. Indeed, for $0<u\le1$,
$$|u^2W_0(u)|\le\tfrac12 u^2\|A\|_1\le C_0,$$ while for $u\ge1$ integration by parts using
$$e^{-iyu^2/2}=(2i/u^2)\partial_y e^{-iyu^2/2}$$ gives $|W_0(u)|\le(A(0)+\|A'\|_1)/u^2\le C_0/u^2$.

We next consider the integration in $u$. The region $p\le as$ (that is, $u\le a$) contributes
\begin{align*}
G_{<}(a)=\int_0^a u^2W_0(u)\bigl[\cos(a/u)-1\bigr]\,du ,
\end{align*}
with $|\cos(a/u)-1|\le2$, so $|G_{<}(a)|\le2C_0a$. The region $p\ge as$ ($u\ge a$) contributes
\begin{align*}
G_{>}(a)=\int_a^\infty u^2W_0(u)\bigl[\cos(a/u)-1\bigr]\,du ,
\end{align*}
and here $a/u\le1$ gives $|\cos(a/u)-1|\le\tfrac12(a/u)^2$, so
$$|G_{>}(a)|\le\tfrac{C_0}{2}\int_a^\infty a^2 u^{-2}\,du=\tfrac{C_0}{2}a.$$ Hence
$|\mathfrak{D}(a,0,0)|\le C a$ which proves \eqref{eq:mkD-est'}.
\end{proof}

\begin{remark}\label{rem:L0}
For the remainder of this section we assume $\mathrm{L}>0$,
so that $q=\mathrm{R}/\mathrm{L}$ is defined and $0\le q\le1$; the case $\mathrm{R}=0$ (i.e.\ $q=0$) is included in the $\mathrm{L}>0$
analysis below. 
\end{remark}

\subsection{The estimates \eqref{eq:mkD-est} when $\mathrm{L}>0$}
In the case $\mathrm{L}>0$, it is enough to prove a uniform bound for the $a$-derivative of the common Abel-regularized integral and then pass to the limit. We compute this derivative first.
\begin{lemma}
Let $0\leq \mathrm{R}\leq \mathrm{L}$, and $a\geq 0$. Let $\mathfrak{D}$ be as in \eqref{eq:direct-difference}.
Then
\begin{align}
\label{eq:direct-difference-derivative}
\frac{d}{da}\mathfrak{D}(a,\mathrm{L},\mathrm{R})
=-\frac{1}{4\pi}\int_0^{2\pi}\widehat P_\theta(a)\,d\theta,
\end{align}
where 
\begin{align}
\begin{split}
\widehat{P}_\theta(a)
= & \int_0^\infty
\nu \int_0^\infty y\operatorname{csch}^2(\sqrt y)e^{-iy\nu^2/2}  \sin\left(
\nu\sqrt y\frac{\mathrm{L}\cosh\sqrt y+\mathrm{R}\cos\theta}{\sinh\sqrt y}
+\frac a\nu
\right)dyd\nu.
\end{split}
\label{eq:fixed-angle-paired-sine}
\end{align}
\end{lemma}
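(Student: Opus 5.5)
The plan is a direct computation performed at the level of the regularized integral $\mathfrak{D}_{\varepsilon,\delta,N,\sigma}(a,\mathrm{L},\mathrm{R})$, followed by passage to the ordered Abel limit. With the factor $e^{-\varepsilon(s+s^{-1})}e^{-\sigma p^2}$ inserted and $p\in[\delta,N]$, the integrand of \eqref{eq:direct-difference} is absolutely integrable. Its $a$-derivative is
\begin{align*}
-\,p^2\operatorname{csch}^2 s\,e^{-ip^2/2}J_0(p\mathrm{R}\operatorname{csch}s)\,\frac{s}{p}\,\sin\Bigl(p\mathrm{L}\coth s+\frac{as}{p}\Bigr)
\end{align*}
times the regulators. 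This is dominated, uniformly in $a$, by $p\,s\operatorname{csch}^2 s$ times the regulators, which is integrable on the truncated region. Hence differentiation under the integral sign is legitimate, and the free term $\cos(p\mathrm{L}\coth s)$ drops out.

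Next, I will remove the Bessel function with the Bessel integral
\begin{align*}
J_0(z)=\frac1{2\pi}\int_0^{2\pi}\cos(z\cos\theta)\,d\theta .
\end{align*}
The product-to-sum formula gives $\sin\Phi\cos(z\cos\theta)=\tfrac12[\sin(\Phi+z\cos\theta)+\sin(\Phi-z\cos\theta)]$. The shift $\theta\mapsto\theta+\pi$ maps the second term onto the first after integration over $[0,2\pi]$. Therefore
\begin{align*}
J_0(z)\sin\Phi=\frac1{2\pi}\int_0^{2\pi}\sin(\Phi+z\cos\theta)\,d\theta,
\end{align*}
with $z=p\mathrm{R}\operatorname{csch}s$ and $\Phi=p\mathrm{L}\coth s+as/p$. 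Fubini at finite regulators lets me put the $\theta$-integral outside.

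Then I will change variables by $p=\nu s$ and $y=s^2$, so that $dp\,ds=s\,d\nu\,ds=\tfrac12\,d\nu\,dy$. The amplitude becomes
\begin{align*}
p\,s\operatorname{csch}^2 s\,dp\,ds=\tfrac12\,\nu\,y\operatorname{csch}^2(\sqrt y)\,d\nu\,dy .
\end{align*}
The phases become $p^2/2=y\nu^2/2$, $as/p=a/\nu$, and
\begin{align*}
p\mathrm{L}\coth s+p\mathrm{R}\operatorname{csch}s\cos\theta=\nu\sqrt y\,\frac{\mathrm{L}\cosh\sqrt y+\mathrm{R}\cos\theta}{\sinh\sqrt y}.
\end{align*}
Collecting the constants $-1\cdot\frac{1}{2\pi}\cdot\frac12=-\frac1{4\pi}$ gives \eqref{eq:direct-difference-derivative}, with $\widehat P_\theta(a)$ as in \eqref{eq:fixed-angle-paired-sine}. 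At this stage $\widehat P_\theta(a)$ still carries the transformed regulators $e^{-\varepsilon(\sqrt y+1/\sqrt y)}e^{-\sigma\nu^2 y}$ on the region $\delta\le\nu\sqrt y\le N$. The identity holds exactly for every choice of regulators.

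The main obstacle is the last step: interpreting the unregularized identity, which requires commuting $d/da$ with the ordered limit $\sigma\downarrow0$, $(\delta,N)\to(0,\infty)$, $\varepsilon\downarrow0$. My plan is to read \eqref{eq:direct-difference-derivative} in the same ordered Abel sense, i.e.\ $\widehat P_\theta(a)$ is defined as that limit. I will then argue that the regularized derivatives converge locally uniformly in $a$. That follows once the uniform estimates on $\widehat P_\theta$ proved afterward are shown to hold uniformly in the regulators. With $\mathfrak{D}(0,\mathrm{L},\mathrm{R})=0$, the fundamental theorem of calculus at finite regulators then transfers to the limit.
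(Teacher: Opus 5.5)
Your proposal is correct and follows the paper's proof step for step: you differentiate in $a$ under the integral, rewrite $J_0(b)\sin C$ as $\frac{1}{2\pi}\int_0^{2\pi}\sin(C+b\cos\theta)\,d\theta$, and substitute $y=s^2$, $\nu=p/s$ (Jacobian $\tfrac12$, giving the constant $-\tfrac{1}{4\pi}$). Your product-to-sum argument with $\theta\mapsto\theta+\pi$ is equivalent to the paper's angle-addition step with $\int_0^{2\pi}\sin(b\cos\theta)\,d\theta=0$. Your finite-regulator treatment is more explicit than the paper, which relegates it to a footnote as standard, but one step does not follow as stated: uniform-in-regulator bounds on the derivative give the later bound $|\mathfrak D|\le Ca$ via the fundamental theorem of calculus at finite regulators, yet they do not by themselves give convergence of the regularized derivatives.
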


\begin{proof}
Differentiating \eqref{eq:direct-difference} with respect to $a$ gives\footnote{Rigorously, one should regularize the integrand by introducing $e^{-\epsilon s^2 - \epsilon p^2}$ first and then send $\epsilon\to 0$, which is standard and we omit this step here and below.}
\begin{align}
\begin{split}
\frac{d}{da}\mathfrak{D}(a,\mathrm{L},\mathrm{R})
={}&-\int_0^\infty\int_0^\infty
ps\operatorname{csch}^2(s)e^{-ip^2/2}J_0(p\mathrm{R}\operatorname{csch}s) \\
&\quad\times
\sin\left(p\mathrm{L}\coth s+\frac{as}{p}\right)\,ds\,dp.
\end{split}
\label{eq:direct-difference-first-derivative}
\end{align}
Recall that
\begin{align*}
J_0(b)
=\frac{1}{2\pi}\int_0^{2\pi}e^{ib\cos\theta}\,d\theta
=\frac{1}{2\pi}\int_0^{2\pi}\cos(b\cos\theta)\,d\theta. 
\end{align*}
We have
\begin{align}
\begin{split}
&\frac{1}{2\pi}\int_0^{2\pi}\sin(C+b\cos\theta)\,d\theta
\\={}&\sin C\frac{1}{2\pi}\int_0^{2\pi}\cos(b\cos\theta)\,d\theta +\cos C\frac{1}{2\pi}\int_0^{2\pi}\sin(b\cos\theta)\,d\theta 
\\
=&J_0(b) \sin C,
\end{split}
\label{eq:bessel-sine-identity}
\end{align}
where we used $\int_0^{2\pi}\sin(b\cos\theta)\,d\theta=0$.
Using \eqref{eq:bessel-sine-identity} with $$b=p\mathrm{R}\operatorname{csch}s,\quad  C=p\mathrm{L}\coth s+\frac{as}{p},$$ then \eqref{eq:direct-difference-first-derivative} becomes
\begin{align*}
\begin{split}
\frac{d}{da}\mathfrak{D}(a,\mathrm{L},\mathrm{R})
={}&-\frac{1}{2\pi}
\int_0^{2\pi}\int_0^\infty\int_0^\infty
ps\operatorname{csch}^2(s)e^{-ip^2/2} \\
&\quad\times
\sin\left(
p\mathrm{L}\coth s+p\mathrm{R}\operatorname{csch}s\cos\theta+\frac{as}{p}
\right)\,ds\,dp\,d\theta.
\end{split}
\end{align*}
Now we make the change of variables
\begin{align*}
y=s^2, \quad
\nu=\frac{p}{s},
\end{align*}
to obtain
\begin{align*}
\begin{split}
\frac{d}{da}\mathfrak{D}(a,\mathrm{L},\mathrm{R})
={}&-\frac{1}{4\pi}
\int_0^{2\pi}\int_0^\infty
\nu\int_0^\infty y\operatorname{csch}^2(\sqrt y)e^{-iy\nu^2/2} 
\\
&\quad\times
\sin\left(
\nu\sqrt y\frac{\mathrm{L}\cosh\sqrt y+\mathrm{R}\cos\theta}{\sinh\sqrt y} + \frac a\nu
\right)\,dy\,d\nu\,d\theta,
\end{split}
\end{align*}
which is \eqref{eq:direct-difference-derivative}.
\end{proof}

We now estimate $\frac{d}{da}\mathfrak{D}(a,\mathrm{L},\mathrm{R})$ using its oscillatory structure. Writing $\sin(\bullet)=(2i)^{-1}(e^{i\bullet}-e^{-i\bullet})$ gives the two phases.
\begin{align}
\Phi_\pm(y,\nu)=-\frac{y\nu^2}{2}\pm\left(\mathrm{L}\nu K_z(y)+\frac{a}{\nu}\right).
\label{eq:phase-branches}
\end{align}
where
\begin{align}
z=\frac{\mathrm{R}}{\mathrm{L}}\cos\theta\in[-1,1], \quad K_z(y)=\frac{\sqrt y(\cosh\sqrt y+z)}{\sinh\sqrt y}\, (y>0), \quad K_z(0)=1+z.
\label{eq:Kz-definition}
\end{align}

The following elementary fact controls the critical-point geometry of the scaled oscillatory integrals.
\begin{proposition}
\label{prop:phase-prop}
Let $z\in[-1,1]$ and $y\geq0$. Let $\Phi_\pm(y,\nu)$ be given by \eqref{eq:phase-branches}, and let $K_z(y)$ be as in \eqref{eq:Kz-definition}.
Then $K_z\in C^\infty([0,\infty))$,
\begin{align*}
K_z'(y)>0,\qquad K_z''(y)<0\qquad (y>0),
\end{align*}
\begin{align*}
K_z'(0)=\frac{2-z}{6},
\qquad
\lim_{y\to\infty}K_z'(y)=0.
\end{align*}
Consequently, the phase $\Phi_-$ has no $y$-critical point, and the phase $\Phi_+$ has only one critical point $y\in(0,\infty)$ when $0<\nu<2\mathrm{L}K'_z(0)$, has the critical point $y=0$ when $\nu=2\mathrm{L}K'_z(0)$, and has no critical point when $\nu>2\mathrm{L}K'_z(0)$.

\end{proposition}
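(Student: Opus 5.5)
The plan is to derive every property of $K_z$ from the Mittag-Leffler expansions of $t\coth t$ and $t/\sinh t$, with $t=\sqrt y$. The expansions show that $K_z$ is a nonnegative combination of the elementary functions $y\mapsto y/(y+c)$. Each such function is smooth, strictly increasing and strictly concave on $[0,\infty)$. The critical-point statements for $\Phi_\pm$ then follow from monotonicity of $K_z'$.

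\textbf{Step 1 (series representation).} Recall
\begin{align*}
\coth t=\frac1t+\sum_{n\ge1}\frac{2t}{t^2+n^2\pi^2},\qquad
\frac{1}{\sinh t}=\frac1t+\sum_{n\ge1}\frac{2(-1)^n t}{t^2+n^2\pi^2}.
\end{align*}
Multiplying by $t$ and setting $y=t^2$ gives, for $y\ge0$,
\begin{align*}
K_z(y)=(1+z)+\sum_{n\ge1}\frac{2\bigl(1+(-1)^n z\bigr)\,y}{y+n^2\pi^2}.
\end{align*}
The value at $y=0$ is consistent with $K_z(0)=1+z$. Put $c_n=n^2\pi^2$ and $\beta_n=1+(-1)^nz$. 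Since $z\in[-1,1]$, we have $\beta_n\in[0,2]$. Moreover, $\beta_1$ and $\beta_2$ cannot both vanish, so some $\beta_n>0$.

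\textbf{Step 2 (smoothness and the sign properties).} For $j\ge1$ one has $\partial_y^j\bigl[y/(y+c_n)\bigr]=(-1)^{j-1}j!\,c_n(y+c_n)^{-j-1}$. On $[0,\infty)$ this is bounded by $j!\,c_n^{-j}\lesssim n^{-2j}$. Hence every term-by-term differentiated series converges uniformly on $[0,\infty)$, which gives $K_z\in C^\infty([0,\infty))$. The same formula yields
\begin{align*}
K_z'(y)=\sum_{n\ge1}\frac{2\beta_n c_n}{(y+c_n)^2}>0,\qquad
K_z''(y)=-\sum_{n\ge1}\frac{4\beta_n c_n}{(y+c_n)^3}<0 .
\end{align*}
Both inequalities are strict because some $\beta_n>0$. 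At $y=0$, using $\sum_{n\ge1}n^{-2}=\pi^2/6$ and $\sum_{n\ge1}(-1)^n n^{-2}=-\pi^2/12$,
\begin{align*}
K_z'(0)=\frac{2}{\pi^2}\sum_{n\ge1}\frac{\beta_n}{n^2}=\frac13-\frac z6=\frac{2-z}{6}.
\end{align*}
Each summand of $K_z'(y)$ tends to $0$ as $y\to\infty$ and is dominated by $2\beta_n/c_n$, which is summable. Dominated convergence therefore gives $K_z'(y)\to0$. One can also see this directly from $K_z(y)\sim\sqrt y$.

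\textbf{Step 3 (critical points).} Differentiating \eqref{eq:phase-branches} in $y$ gives
\begin{align*}
\partial_y\Phi_-=-\frac{\nu^2}{2}-\mathrm{L}\nu K_z'(y)<0,\qquad
\partial_y\Phi_+=\nu\Bigl(\mathrm{L}K_z'(y)-\frac\nu2\Bigr).
\end{align*}
So $\Phi_-$ has no $y$-critical point. For $\Phi_+$, a critical point is a solution of $K_z'(y)=\nu/(2\mathrm{L})$. By Step 2, $K_z'$ is continuous and strictly decreasing from $K_z'(0)$ to $0$ on $[0,\infty)$. It follows that:
\begin{itemize}
\item there is exactly one solution $y\in(0,\infty)$ when $0<\nu<2\mathrm{L}K_z'(0)$;
\item the only solution is $y=0$ when $\nu=2\mathrm{L}K_z'(0)$;
\item there is no solution when $\nu>2\mathrm{L}K_z'(0)$.
\end{itemize}

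\textbf{Expected obstacle.} The only delicate point is the uniform-in-$z$ sign of $K_z''$. Differentiating the closed form of $K_z$ directly gives messy hyperbolic expressions. Writing $K_z$ as a combination of the terms $y/(y+c_n)$ with coefficients $\beta_n\ge0$ avoids this entirely. If the coefficient signs needed further justification, one can note that $K_z$ is affine in $z$, so it suffices to treat the endpoint cases $K_1=t\coth(t/2)$ and $K_{-1}=t\tanh(t/2)$.
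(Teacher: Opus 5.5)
Your proof is correct, but it takes a genuinely different route from the paper's.

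\textbf{The paper's route.} The paper writes $K_z=\frac{1+z}{2}K_1+\frac{1-z}{2}K_{-1}$, with $K_1=\sqrt y\coth(\sqrt y/2)$ and $K_{-1}=\sqrt y\tanh(\sqrt y/2)$. It computes $K_{\pm1}'$ explicitly in $u=\sqrt y/2$. It gets the signs of $K_{\pm1}'$ and $K_{\pm1}''$ from elementary hyperbolic inequalities: $\tanh u>u\operatorname{sech}^2u$, $\sinh u\cosh u>u$, $\coth u<\frac1u+\frac u3$, and $\sinh u\cosh u>u+\frac23u^3$. It then passes to all $z$ by convex interpolation. The value $K_z'(0)$ comes from Taylor expansions of $\coth$ and $\tanh$, and the limit from the asymptotics $K_z'(y)=\frac1{2\sqrt y}+O(e^{-\sqrt y})$.

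\textbf{What your route buys.} Your Mittag-Leffler representation $K_z(y)=(1+z)+\sum_{n\ge1}2\beta_n y/(y+n^2\pi^2)$, with $\beta_n=1+(-1)^nz\in[0,2]$, replaces all of those inequalities by one structural fact. $K_z'$ is a positive combination of $c_n(y+c_n)^{-2}$, so it is completely monotone. This gives, in one stroke and uniformly in $z$:
\begin{itemize}
\item smoothness up to $y=0$ (in fact real-analyticity on $y>-\pi^2$);
\item strict monotonicity and strict concavity;
\item the exact value $K_z'(0)=(2-z)/6$, via $\sum n^{-2}=\pi^2/6$ and $\sum(-1)^nn^{-2}=-\pi^2/12$;
\item the limit $K_z'(y)\to0$ by dominated convergence.
\end{itemize}
It also gives for free the uniform bounds $\sup_{z,y}|K_z^{(j)}(y)|\le 4\,j!\sum_n(n\pi)^{-2j}$ for $j\ge1$. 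The paper later invokes such bounds in \eqref{eq:ordered-tail-regularity} without separate proof.

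\textbf{What the paper's route buys.} Its hands-on computation yields the precise large-$y$ rate $K_z'(y)\sim\frac1{2\sqrt y}$ used in \eqref{eq:ps-asym}. Your series does not give this rate immediately, though it is not needed for this proposition.

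Your Step 3 (critical points of $\Phi_\pm$) is the same argument as the paper's. Like the paper, you tacitly use $\mathrm{L}>0$ and $\nu>0$ when dividing.
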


\begin{proof}
Write $s=\sqrt y$ and $u=s/2$. At the two endpoint values of $z$,
\begin{align*}
K_1(y)=s\coth(u),\qquad K_{-1}(y)=s\tanh(u)=2u\tanh(u),
\end{align*}
and
\begin{align*}
K_z=\frac{1+z}{2}K_1+\frac{1-z}{2}K_{-1}.
\end{align*}
For $K_{-1}$, differentiation with respect to $y$ gives
\begin{align*}
K_{-1}'(y)=\frac14\left(\frac{\tanh u}{u}+\operatorname{sech}^2u\right)>0.
\end{align*}
The derivative of the expression in parentheses is
\begin{align*}
\frac{u\operatorname{sech}^2u-\tanh u}{u^2}-2\operatorname{sech}^2u\tanh u<0,
\end{align*}
because $\tanh u-u\operatorname{sech}^2u>0$ for $u>0$. Hence $K_{-1}''(y)<0$.

For $K_1$,
\begin{align*}
K_1'(y)=\frac14\left(\frac{\coth u}{u}-\operatorname{csch}^2u\right)>0,
\end{align*}
since $\sinh u\cosh u>u$. If
\begin{align*}
h(u)=\frac{\coth u}{u}-\operatorname{csch}^2u,
\end{align*}
then
\begin{align*}
h'(u)=\frac{-u-\sinh u\cosh u+2u^2\coth u}{u^2\sinh^2u}.
\end{align*}
The inequalities
\begin{align*}
\coth u<\frac1u+\frac u3,
\qquad
\sinh u\cosh u>u+\frac23u^3
\end{align*}
imply $h'(u)<0$, and therefore $K_1''(y)<0$. The first inequality follows by differentiating
$(3+u^2)\sinh u-3u\cosh u$, and the second follows by differentiating
$\sinh u\cosh u-u-\frac23u^3$.

Convex interpolation now yields $K_z'(y)>0$ and $K_z''(y)<0$ for every $z\in[-1,1]$ and $y>0$. The expansions
\begin{align*}
\coth(s/2)=\frac2s+\frac s6+O(s^3),
\qquad
\tanh(s/2)=\frac s2-\frac{s^3}{24}+O(s^5)
\end{align*}
give $K_z(0)=1+z$ and $K_z'(0)=(2-z)/6$. They also show that $K_z$ is smooth as a function of $y$ at $0$. Finally,
\begin{align*}
K_z(y)=\sqrt y\bigl(1+O(e^{-\sqrt y})\bigr),
\qquad
K_z'(y)=\frac1{2\sqrt y}+O(e^{-\sqrt y}),
\end{align*}
as $y\to\infty$, so $K_z'(y)\to0$. In addition, direct differentiation gives
\begin{align*}
K_z(y)-2yK_z'(y)=\frac{y(1+z\cosh\sqrt y)}{\sinh^2\sqrt y},
\end{align*}
and the monotonicity property in \eqref{eq:Kz-property} follows by differentiating this expression on the region where it is nonnegative.
For reference, the derivative formula is
\begin{align*}
\begin{split}
K'_z(y)
&=\frac{\sinh\sqrt y(\cosh\sqrt y+z)-\sqrt y(1+z\cosh\sqrt y)}{2\sqrt y\sinh^2\sqrt y}.
\end{split}
\end{align*}
The identities used below are
\begin{align}
\label{eq:K'z-positive}
K'_z(y)>0.
\end{align}
Moreover, a direct computation shows
\begin{align}
\begin{split}
K'_z(y)>0,
\qquad K''_z(y)<0,
\qquad &K'_z(0)=\frac{2-z}{6},
\qquad \lim_{y\to\infty}K'_z(y)=0,\\
K_z(y)-2yK'_z(y)
&=
\frac{y(1+z\cosh\sqrt y)}{\sinh^2\sqrt y},\\
\frac{d}{dy}
\big(
4K'_z(y)^2\big(K_z(y)-2yK'_z(y)\big)
\big)
&<0
\quad\text{whenever}\quad
K_z(y)-2yK'_z(y)\geq0.
\end{split}
\label{eq:Kz-property}
\end{align}

Since $a/\nu$ is independent of $y$, the $y$-derivatives of these phases are
\begin{align}
\partial_y\Phi_+(y,\nu)=\nu\left(\mathrm{L}K'_z(y)-\frac{\nu}{2}\right), \qquad
\partial_y\Phi_-(y,\nu)=-\nu\left(\mathrm{L}K'_z(y)+\frac{\nu}{2}\right).
\label{eq:phase-derivatives}
\end{align}

By \eqref{eq:K'z-positive}, $\partial_y\Phi_-(y,\nu)<0$ for every $y\geq0$ and $\nu>0$, so $\Phi_-$ has no $y$-critical point. For $\Phi_+$, the critical-point equation is
\begin{align}
\partial_y\Phi_+(y,\nu)=0 \quad\Longleftrightarrow\quad \nu=2\mathrm{L}K'_z(y).
\label{eq:critical-equation}
\end{align}

Since $K'_z$ is strictly decreasing from $K'_z(0)$ to $0$, equation \eqref{eq:critical-equation} has exactly one solution $y\in(0,\infty)$ when $0<\nu<2\mathrm{L}K'_z(0)$, has the boundary solution $y=0$ when $\nu=2\mathrm{L}K'_z(0)$, and has no solution when $\nu>2\mathrm{L}K'_z(0)$. 

\end{proof}

The critical frequency from Proposition~\ref{prop:phase-prop} is
$\nu=2\mathrm{L}K'_z(0)$. We use the enlarged splitting point
\begin{align*}
\nu = 4\mathrm{L}K'_z(0)=\frac{2}{3}(2\mathrm{L}-\mathrm{R}\cos\theta),
\end{align*}
which has distance to the critical point of the phase comparable to its own size. Then, from \eqref{eq:fixed-angle-paired-sine}, we have
\begin{align}
\begin{split}
\widehat P_\theta(a)={}&\int_0^{4\mathrm{L}K'_z(0)}\nu\int_0^\infty y\operatorname{csch}^2(\sqrt y)e^{-iy\nu^2/2}\sin(\mathrm{L}\nu K_z(y)+a/\nu)\,dy\,d\nu\\
&+\lim_{V\to\infty}\int_{4\mathrm{L}K'_z(0)}^V\nu\int_0^\infty y\operatorname{csch}^2(\sqrt y)e^{-iy\nu^2/2}\sin(\mathrm{L}\nu K_z(y)+a/\nu)\,dy\,d\nu.
\end{split}
\label{eq:Phat-decomposition}
\end{align}

\subsection[Low-frequency contribution]{Low-frequency contribution}
We now consider the first integral in \eqref{eq:Phat-decomposition}; we will prove the required angularly integrated bound.

\begin{lemma} \label{lem:ps}
Let $A(y)=y\operatorname{csch}^2(\sqrt y)$.
For every $\ell>0$, $\rho\geq0$, and $q\in[0,1]$ with $z=q\cos\theta$, define
\begin{align}
\widehat P^\ell(\rho;z) = \int_0^{4\ell K'_z(0)}
\nu\int_0^\infty
A(y)e^{-iy\nu^2/2}
\sin\big(\ell\nu K_z(y)+\rho/\nu\big)
\,dy\,d\nu.
\label{eq:ps-def}
\end{align}
Then there is an absolute constant $C$ such that
\begin{align}
\Big| \int_0^{2\pi}
\widehat P^\ell(\rho;q\cos\theta)\,d\theta
\Big| \leq C.
\label{eq:ps-est}
\end{align}
\end{lemma}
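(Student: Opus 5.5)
The plan is to bound $\widehat P^\ell(\rho;z)$ uniformly in $\ell,\rho$ and $z\in[-1,1]$, up to an angular average that is needed only near the degenerate configuration. The inner $y$-integral is a Fresnel-type transform of the amplitude $A(y)$. Since $\nu$ is bounded by $4\ell K_z'(0)\le 2\ell$, the window is bounded in $\nu$ once we measure it against the scale $\ell$.

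\textbf{Step 1 (rescaling).} Substitute $\nu=\ell\mu$, so that $\mu\in[0,4K_z'(0)]\subset[0,2]$. The integral becomes
\[
\ell^2\int_0^{4K'_z(0)}\mu\int_0^\infty A(y)\,e^{i\ell^2(-y\mu^2/2\pm \mu K_z(y))}e^{\pm i\rho/(\ell\mu)}\,dy\,d\mu,
\]
up to the factor $(2i)^{-1}$. The phase in $(y,\mu)$ is therefore multiplied by the large parameter $\lambda=\ell^2$, while the $\rho$-factor depends only on $\mu$ and is unimodular. This gives the size estimates we need.

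\textbf{Step 2 (the case $\ell\lesssim1$).} Here we bound trivially. Using $|A(y)|\lesssim e^{-\sqrt y}$, the inner integral is $O(1)$, so $|\widehat P^\ell|\lesssim \ell^2\lesssim 1$.

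\textbf{Step 3 (the case $\ell\gg1$, minus branch).} Proposition~\ref{prop:phase-prop} gives $|\partial_y\Phi_-|\ge \ell\nu K'_z(y)$. Integrate by parts in $y$. The boundary term at $y=0$ is $A(0)/(\ell\nu K'_z(0)+\nu^2/2)\cdot\nu$, which is $O(\ell^{-1})$. Integrating this over $\nu\le 2\ell$ gives $O(1)$. The bulk term needs $A'/\partial_y\Phi$, which is integrable since $K'_z(y)\sim y^{-1/2}$ and $A'$ decays exponentially.

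\textbf{Step 4 (the case $\ell\gg1$, plus branch).} Here there is a nondegenerate critical point $y_c(\nu)$, determined by $\nu=2\ell K'_z(y_c)$. Its Hessian is $\partial_y^2\Phi_+=\nu\ell K''_z(y_c)$. Stationary phase in $y$ gives a leading term of size
\[
\nu A(y_c)\,\bigl(\nu\ell|K''_z(y_c)|\bigr)^{-1/2},
\]
with a harmless oscillating factor $e^{i\Phi_+(y_c,\nu)+i\rho/\nu}$. We then change variables $\nu\mapsto y_c$, so that $d\nu=2\ell|K''_z(y_c)|\,dy_c$. The leading contribution becomes
\[
\int A(y_c)\,(2\ell K'_z)^{1/2}\,\ell^{1/2}|K''_z|^{1/2}\,dy_c\sim \ell\int A(y_c)\sqrt{K'_zK''_z}\,dy_c,
\]
which grows linearly in $\ell$. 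A size estimate therefore does not suffice. We must exploit oscillation in $\nu$ (equivalently in $y_c$): the critical value satisfies $\frac{d}{d\nu}\Phi_+(y_c(\nu),\nu)=-y_c\nu+\ell K_z(y_c)-\rho/\nu^2$.

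This is where the $\theta$-average enters. The phase $\ell\nu K_z(y_c)$ depends on $z=q\cos\theta$ through $\ell\nu(\cosh\sqrt{y}+z)\sqrt y/\sinh\sqrt y$. Its $\theta$-derivative is of size $\ell\nu q\,\sqrt y/\sinh\sqrt y\,|\sin\theta|$. We would therefore integrate by parts in $\theta$, or use the stationary-phase bound for $\int e^{ib\cos\theta}\,d\theta\lesssim b^{-1/2}$ with $b\sim \ell^2 q$, to gain $(\ell^2q)^{-1/2}$. We combine this with one integration by parts in $\nu$ using $\frac{d}{d\nu}\Phi_+\approx \ell K_z(y_c)-\nu y_c$. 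By Proposition~\ref{prop:phase-prop} and the identity $K_z-2yK'_z=y(1+z\cosh\sqrt y)/\sinh^2\sqrt y$, this derivative equals $\ell(K_z-2y_cK'_z)$, which is bounded below by $\ell$ for small $y_c$ when $z>-1$. The gain is then $\ell^{-1}$, which offsets the growth.

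\textbf{Main obstacle.} The main difficulty is the degenerate set where $K_z-2yK'_z$ vanishes. This happens for $z<0$ at the point where $1+z\cosh\sqrt y=0$, and for $z=-1$ at $y=0$. There the $\nu$-integration by parts fails. The monotonicity of $4K'^2_z(K_z-2yK'_z)$ in \eqref{eq:Kz-property} should make this a simple degenerate (cubic-type) critical point, giving a van der Corput gain of $\ell^{-2/3}$ at least. The $\rho/\nu$ term must be absorbed, either as part of the phase handled by van der Corput, which is uniform in $\rho$, or by noting that it is monotone. The remaining loss is to be covered by the $\theta$-average, since the degenerate configuration occurs only for $q\cos\theta$ near a specific value, a set of $\theta$-measure controlled via $|\sin\theta|$. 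Balancing these two gains uniformly in $q$ is the step I expect to be hardest.
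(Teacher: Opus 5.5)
Your Steps 1--3 agree with the paper: the rescaling $\nu=\ell w$ with $\Lambda=\ell^2$, the trivial bound for $\ell<1$, and one integration by parts in $y$ for the minus branch. Step 4 carries the whole difficulty, and it does not close.

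\textbf{Step 4 fails as written.} Your $\nu$-integration by parts uses $\frac{d}{d\nu}\Phi_+(y_c(\nu),\nu)\approx\ell(K_z-2y_cK_z')$ and drops the term $-\rho/\nu^2$. With $\widetilde\rho=\rho/\ell^3>0$, this derivative vanishes on the whole curve $\widetilde\rho=4K_z'(y)^2\bigl(K_z(y)-2yK_z'(y)\bigr)$, for every $z$. So a first-derivative argument gains nothing there, and the monotonicity of $\rho/\nu$ does not help, since it is exactly what cancels the other term. Two further problems:
\begin{itemize}
\item For small $w=\nu/\ell$ the critical point $y_c=s^2$ is large and $K_z-2y_cK_z'=O(s^2e^{-s})$. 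Your lower bound $\ell$, which for small $y_c$ is in any case only $\ell(1+z)$, is therefore lost.
\item Near $\nu=2\ell K_z'(0)$ the critical point reaches the endpoint $y=0$, where $A(0)=1$. One-dimensional stationary phase followed by the substitution $\nu\mapsto y_c$ is not uniform there.
\end{itemize}
The $\theta$-average cannot repair any of this. It is void at $q=0$, where $z\equiv0$ and the lemma is a fixed-$z$ statement. In general, the $\theta$-derivative of the phase carries the factor $\sqrt y/\sinh\sqrt y$, which is exponentially small exactly in the large-$y_c$ regime.

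\textbf{The obstacle is misidentified.} After stationary phase in $y$, the reduced phase $S_{z,\widetilde\rho}(s)$ has $S'$ equal to a nonvanishing factor times $4K_z'^2(K_z-2yK_z')-\widetilde\rho$; see \eqref{eq:ps-Sd}. By \eqref{eq:Kz-property}, the function $4K_z'^2(K_z-2yK_z')$ is strictly decreasing wherever it is nonnegative. Hence every critical point with $\widetilde\rho\ge0$ is nondegenerate, including those where $K_z-2yK_z'=0$. Equivalently, the $(y,w)$-Hessian determinant $\frac{1}{4K_z'}\frac{d}{dy}\bigl(4K_z'^2(K_z-2yK_z')\bigr)$ is nonzero. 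There is no cubic degeneracy, and a cubic $\ell^{-2/3}$ gain would in any case leave $O(\ell^{1/3})$. Also, the set $1+z\cosh\sqrt y=0$ meets every $z\in(-1,0)$, so it is not a thin set of $\theta$.

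\textbf{The missing mechanism} is the one the paper uses:
\begin{itemize}
\item For $w\le w_0$ and $z\ge-1/2$: stationary phase in $y$, then van der Corput in $s=\sqrt{y_c}$ on cells $[j,j+1]$. Matching $|S''|\gtrsim e^{-2j}$ against the amplitude bound $(1+s)e^{-2s}$ gives $(1+j)e^{-j}$ per cell. The first-derivative test handles the remaining cells up to $s\le\frac13\log(2+\Lambda)$, and the tail beyond that is trivial.
\item For $w\le w_0$ and $z\le-1/2$: integration by parts in $s$ at fixed $u=sw$, using $1+z\cosh s<0$ for large $s$.
\item For $w\ge w_0$: nondegenerate two-dimensional stationary phase in $(y,w)$, giving $\Lambda^{-1}$, plus integration by parts in $w$ for large $\widetilde\rho$.
\end{itemize}
The angle is used only for $z<-3/4$, i.e.\ near the configuration $z=-1$, $y=0$ that you correctly flagged. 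There it enters as a third stationary-phase variable $\varphi$ (with $\theta=\pi\pm\varphi$) with nondegenerate Hessian, giving $\Lambda^{-3/2}$.
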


\begin{proof}
The main tool for proving this lemma is the one-dimensional van der Corput lemma and stationary phase in their standard forms; see \cite[Chapter VIII, \S1.2, Proposition~2 and the following corollary, pp.~332--334; \S2.3, Proposition~6, pp.~344--345]{Stein93}.

A direct computation shows
\begin{align}
\begin{split}
& A,A',A'' \in L^1(0,\infty), \quad A(s^2) = 4s^2e^{-2s}\big(1+O(e^{-2s})\big),\\
& K'_z(s^2) = \frac{1}{2s}+O(e^{-s}), \quad 
K''_z(s^2) = -\frac{1}{4s^3}+O(e^{-s}/s).
\end{split}
\label{eq:ps-asym}
\end{align}
If $0<\ell<1$, then \eqref{eq:Kz-property} gives $4\ell K'_z(0)\leq2\ell$, and hence
\begin{align}
\begin{split}
|\widehat P^\ell(\rho;z)|
&\leq
\int_0^{4\ell K'_z(0)}\nu\,d\nu
\int_0^\infty A(y)\,dy
\leq C\ell^2
\leq C.
\end{split}
\label{eq:ps-smell}
\end{align}
Integrating \eqref{eq:ps-smell} in $\theta$ proves \eqref{eq:ps-est} for $0<\ell<1$.

From now on, we assume $\ell\geq1$. Set
\begin{align*}
\Lambda=\ell^2,
\qquad
\widetilde\rho=\frac{\rho}{\ell^3},
\qquad
\nu=\ell w.
\end{align*}
Then \eqref{eq:ps-def} becomes
\begin{align}
\begin{split}
\widehat P^\ell(\rho;z)
=
\Lambda
\int_0^{4K'_z(0)}
w\int_0^\infty
A(y)e^{-i\Lambda yw^2/2}
\sin\Big(
\Lambda\big(wK_z(y)+\frac{\widetilde\rho}{w}\big)
\Big)
\,dy\,dw.
\end{split}
\label{eq:ps-scaled}
\end{align}
The large parameter in \eqref{eq:ps-scaled} is
\begin{align}
\Lambda=\ell^2\geq1.
\label{eq:ps-lam}
\end{align}
Define
\begin{align}
\Phi_\pm(y,w;z)
=
-\frac{yw^2}{2}
\pm\big(wK_z(y)+\frac{\widetilde\rho}{w}\big).
\label{eq:ps-ph}
\end{align}
By \eqref{eq:ps-scaled} and \eqref{eq:ps-ph},
\begin{align}
\begin{split}
\widehat P^\ell(\rho;z)
=
\frac{\Lambda}{2i}
\int_0^{4K'_z(0)}
w\int_0^\infty
A(y)
\big(
e^{i\Lambda\Phi_+(y,w;z)}
-
e^{i\Lambda\Phi_-(y,w;z)}
\big)
\,dy\,dw.
\end{split}
\label{eq:ps-dec}
\end{align}

For the minus phase,
\begin{align}
\partial_y\Phi_-(y,w;z)
=
-w\big(\frac{w}{2}+K'_z(y)\big)<0.
\label{eq:ps-md}
\end{align}
One integration by parts in $y$, using \eqref{eq:ps-lam}, \eqref{eq:ps-md} and $A(0)=1$, gives
\begin{align}
\begin{split}
&
|
\Lambda
\int_0^{4K'_z(0)}
w\int_0^\infty
A(y)e^{i\Lambda\Phi_-(y,w;z)}
\,dy\,dw
|
\\
&\leq
\int_0^{4K'_z(0)}
\frac{dw}{w/2+K'_z(0)}
+
\int_0^\infty
|A'(y)|
\int_0^{4K'_z(0)}
\frac{dw}{w/2+K'_z(y)}
\,dy
\\ & \quad+
\int_0^\infty
A(y)(-K''_z(y))
\int_0^{4K'_z(0)}
\frac{dw}{\big(w/2+K'_z(y)\big)^2}dy.
\end{split}
\label{eq:ps-mibp}
\end{align}
The three $w$-integrals in \eqref{eq:ps-mibp} satisfy
\begin{align}
\begin{split}
& \int_0^{4K'_z(0)}
\frac{dw}{w/2+K'_z(0)}
 \lesssim 1, \quad \int_0^{4K'_z(0)}
\frac{dw}{w/2+K'_z(y)}
\leq C\log(2+\sqrt y),
\\ & (-K''_z(y))
\int_0^{4K'_z(0)}
\frac{dw}{\big(w/2+K'_z(y)\big)^2}
\leq\frac{C}{1+y}.
\end{split}
\label{eq:ps-mw}
\end{align}
Equations \eqref{eq:ps-asym}, \eqref{eq:ps-mibp}, and \eqref{eq:ps-mw} yield
\begin{align}
\sup_{\Lambda\geq1,\ \widetilde\rho\geq0,\ z\in[-1,1]}
\Big|
\Lambda
\int_0^{4K'_z(0)}
w\int_0^\infty
A(y)e^{i\Lambda\Phi_-(y,w;z)}
\,dy\,dw\Big|
\leq C.
\label{eq:ps-mbd}
\end{align}

We next consider the contribution of the phase $\Phi_+$.

\subsubsection{The phase $\Phi_+$ with $w\in[0,w_0]$. } Fix $w_0\in(0,1/12)$ sufficiently small. For $z\in[-1,-1/2]$, choose $S_0\geq2$ so that
\begin{align}
1+z\cosh s\leq-c\cosh s
\qquad
(s\geq S_0),
\label{eq:ps-neg0}
\end{align}
which will be used later in the region $\sqrt{y}=s\geq S_0$.
By \eqref{eq:Kz-property}, we know $K'_z(y)>0$ and $\lim_{y\to\infty}K'_z(y)=0$, which gives a constant $c_0$ such that $K'_z(y) \geq c_0$ when $y \in [0,S_0^2]$.
After decreasing $w_0$,  this gives
\begin{align}
\partial_y\Phi_+(y,w;z)
=
w\big(K'_z(y)-\frac{w}{2}\big)
\geq cw
\label{eq:ps-negc}
\end{align}
for $0\leq y\leq S_0^2$, $0<w\leq w_0$, and $z\in[-1,-1/2]$. One integration by parts in $y$, using \eqref{eq:ps-lam} and \eqref{eq:ps-negc}, gives
\begin{align}
\sup_{\Lambda\geq1,\ \widetilde\rho\geq0,\ z\in[-1,-1/2]}
|
\Lambda
\int_0^{w_0}
w\int_0^{S_0^2}
A(y)e^{i\Lambda\Phi_+(y,w;z)}
\,dy\,dw
|
\leq C.
\label{eq:ps-negcb}
\end{align}
On $y=s^2\geq S_0^2$, set $u=sw$. The transformed integral is
\begin{align}
\begin{split}
&
\Lambda
\int_0^{w_0}
w\int_{S_0^2}^\infty
A(y)e^{i\Lambda\Phi_+(y,w;z)}
\,dy\,dw
\\
&=
\Lambda
\int_0^\infty
\int_{\max\{S_0,u/w_0\}}^\infty
2us\operatorname{csch}^2s
\exp\big(
i\Lambda\big(
-\frac{u^2}{2}
+u\frac{\cosh s+z}{\sinh s}
+\frac{\widetilde\rho s}{u}
\big)
\big)
\,ds\,du.
\end{split}
\label{eq:ps-negt}
\end{align}
By \eqref{eq:ps-neg0}, the $s$-derivative of the phase in \eqref{eq:ps-negt} satisfies
\begin{align}
-u\frac{1+z\cosh s}{\sinh^2s}
+\frac{\widetilde\rho}{u}
\geq cue^{-s}+\frac{\widetilde\rho}{u}.
\label{eq:ps-negs}
\end{align}
Moreover,
\begin{align}
\begin{split}
&
\frac{
2us\operatorname{csch}^2s
}{
-u(1+z\cosh s)/\sinh^2s+\widetilde\rho/u
}
\\
&\quad+
\Big|
\partial_s
\big(
\frac{
2us\operatorname{csch}^2s
}{
-u(1+z\cosh s)/\sinh^2s+\widetilde\rho/u
}
\big)
\Big|
\leq C(1+s)e^{-s}.
\end{split}
\label{eq:ps-negq}
\end{align}
One integration by parts in $s$, using \eqref{eq:ps-lam}, \eqref{eq:ps-negs}, and \eqref{eq:ps-negq}, gives
\begin{align}
\begin{split}
&
|
\Lambda
\int_0^{w_0}
w\int_{S_0^2}^\infty
A(y)e^{i\Lambda\Phi_+(y,w;z)}
\,dy\,dw
|
\\
&\leq
C
\int_0^\infty
\big(1+\max\{S_0,u/w_0\}\big)
e^{-\max\{S_0,u/w_0\}}
\,du
\leq C.
\end{split}
\label{eq:ps-negb}
\end{align}
Equations \eqref{eq:ps-negcb} and \eqref{eq:ps-negb} imply
\begin{align}
\sup_{\Lambda\geq1,\ \widetilde\rho\geq0,\ z\in[-1,-1/2]}
\Big|
\Lambda
\int_0^{w_0}
w\int_0^\infty
A(y)e^{i\Lambda\Phi_+(y,w;z)}
\,dy\,dw
\Big|
\leq C.
\label{eq:ps-neg}
\end{align}

For $z\in[-1/2,1]$ and $0<w\leq w_0$, the equation
\begin{align}
w=2K'_z(y_c)
\label{eq:ps-yc}
\end{align}
has exactly one solution $y_c=y_c(w,z)>0$. After decreasing $w_0$, one has $y_c\geq S_0^2$. In the coordinate $y=y_cr^2$, the critical point is $r=1$, and
\begin{align*}
\partial_r^2
\Big(
-\frac{y_cr^2w^2}{2}+wK_z(y_cr^2)
\Big)
\big|_{r=1}
=
4y_c^2wK''_z(y_c).
\end{align*}
Equations \eqref{eq:ps-asym} and \eqref{eq:ps-yc} give
\begin{align*}
-c_2\leq4y_c^2wK''_z(y_c)\leq-c_1<0.
\end{align*}
Applying the stationary phase in the $r$-variable with the large parameter \eqref{eq:ps-lam} gives
\begin{align}
\begin{split}
&
\Lambda w
\int_0^\infty
A(y)e^{i\Lambda(-yw^2/2+wK_z(y))}
\,dy
\\
&=
c_0\sqrt\Lambda\,
A(y_c)
\sqrt{\frac{w}{-K''_z(y_c)}}
e^{i\Lambda(-y_cw^2/2+wK_z(y_c))}
+R(w,z,\Lambda),
\end{split}
\label{eq:ps-ysp}
\end{align}
where
\begin{align}
\sup_{z\in[-1/2,1],\ \Lambda\geq1}
\int_0^{w_0}|R(w,z,\Lambda)|\,dw
\leq C.
\label{eq:ps-yr}
\end{align}
The bound \eqref{eq:ps-yr} follows by applying the first-derivative form of the van der Corput lemma in the $r$-variable outside a fixed neighborhood of $r=1$ and using \eqref{eq:ps-asym}.

Parameterize \eqref{eq:ps-yc} by
\begin{align*}
y_c=s^2,
\qquad
w=2K'_z(s^2).
\end{align*}
The phase of the leading term in \eqref{eq:ps-ysp}, including the reciprocal term in \eqref{eq:ps-ph}, is
\begin{align}
S_{z,\widetilde\rho}(s)
=
-2s^2K'_z(s^2)^2
+2K'_z(s^2)K_z(s^2)
+\frac{\widetilde\rho}{2K'_z(s^2)}.
\label{eq:ps-S}
\end{align}
The change of variables $w=2K'_z(s^2)$ transforms the leading contribution of \eqref{eq:ps-ysp} into
\begin{align}
\begin{split}
c_0\sqrt\Lambda
\int_{S_0}^\infty
4sA(s^2)
\sqrt{2K'_z(s^2)(-K''_z(s^2))}
e^{i\Lambda S_{z,\widetilde\rho}(s)}
\,ds.
\end{split}
\label{eq:ps-Sint}
\end{align}
Differentiating \eqref{eq:ps-S} gives
\begin{align}
\begin{split}
S'_{z,\widetilde\rho}(s)
=
\frac{sK''_z(s^2)}{K'_z(s^2)^2}
\big(
4K'_z(s^2)^2
\big(K_z(s^2)-2s^2K'_z(s^2)\big)
-\widetilde\rho
\big).
\end{split}
\label{eq:ps-Sd}
\end{align}
The exact identities
\begin{align}
\begin{split}
K_z(s^2)
&=
s\frac{1+2ze^{-s}+e^{-2s}}{1-e^{-2s}},\\
K_z(s^2)-2s^2K'_z(s^2)
&=
\frac{2s^2e^{-s}\big(z(1+e^{-2s})+2e^{-s}\big)}
{(1-e^{-2s})^2}
\end{split}
\label{eq:ps-ex}
\end{align}
and direct differentiation give, after increasing $S_0$,
\begin{align}
\begin{split}
&
\big|
4sA(s^2)
\sqrt{2K'_z(s^2)(-K''_z(s^2))}
\big|
\\
&\quad+
\Big|
\frac{d}{ds}
\big(
4sA(s^2)
\sqrt{2K'_z(s^2)(-K''_z(s^2))}
\big)
\Big|
\leq C(1+s)e^{-2s},
\end{split}
\label{eq:ps-B}\\
\begin{split}
\frac12
\leq
-\frac{sK''_z(s^2)}{K'_z(s^2)^2}
\leq2,
\qquad
\Big|
\frac{d}{ds}
\frac{sK''_z(s^2)}{K'_z(s^2)^2}
\Big|
\leq C(1+s)^2e^{-s}.
\end{split}
\nonumber
\end{align}
Equations \eqref{eq:Kz-property}, \eqref{eq:ps-Sd}, and \eqref{eq:ps-ex} imply that each interval $[j,j+1]$, $j\geq S_0$, is the union of at most an absolute number of intervals $J$ such that either
\begin{align}
|S''_{z,\widetilde\rho}(s)|\geq ce^{-2j}
\qquad
(s\in J),
\label{eq:ps-cell2}
\end{align}
with constant sign, or
\begin{align}
|S'_{z,\widetilde\rho}(s)|\geq ce^{-2j}
\qquad
(s\in J),
\label{eq:ps-cell1}
\end{align}
with $S'_{z,\widetilde\rho}$ monotone. The function
\begin{align*}
4K'_z(s^2)^2
\big(K_z(s^2)-2s^2K'_z(s^2)\big)
\end{align*}
is strictly decreasing while it is nonnegative by \eqref{eq:Kz-property}. When it is negative, its difference from $\widetilde\rho\geq0$ cannot vanish. The factor $z(1+e^{-2s})+2e^{-s}$ in \eqref{eq:ps-ex} has at most one zero. Direct differentiation of \eqref{eq:ps-ex} shows that failure of \eqref{eq:ps-cell1} implies \eqref{eq:ps-cell2}.

On an interval satisfying \eqref{eq:ps-cell2}, the second-derivative form of the van der Corput lemma, together with \eqref{eq:ps-lam} and \eqref{eq:ps-B}, gives
\begin{align}
\sqrt\Lambda
| \int_J
4sA(s^2)
\sqrt{2K'_z(s^2)(-K''_z(s^2))}
e^{i\Lambda S_{z,\widetilde\rho}(s)}
ds | \leq C(1+j)e^{-j}.
\label{eq:ps-cell2b}
\end{align}
On an interval satisfying \eqref{eq:ps-cell1}, the first-derivative form of the van der Corput lemma (so $-1$ power of the lower bound in \eqref{eq:ps-cell1} enters), together with \eqref{eq:ps-lam} and \eqref{eq:ps-B}, gives
\begin{align}
\begin{split}
&
\sqrt\Lambda
| \int_J
4sA(s^2) \sqrt{2K'_z(s^2)(-K''_z(s^2))}
e^{i\Lambda S_{z,\widetilde\rho}(s)} \,ds |
\leq \frac{C(1+j)}{\sqrt\Lambda}.
\end{split}
\label{eq:ps-cell1b}
\end{align}
Summing \eqref{eq:ps-cell2b} over $j\geq S_0$ and summing \eqref{eq:ps-cell1b} over
\begin{align*}
j\leq\frac13\log(2+\Lambda)
\end{align*}
give
\begin{align}
\sum_J
\sqrt\Lambda
|
\int_J
4sA(s^2)
\sqrt{2K'_z(s^2)(-K''_z(s^2))}
e^{i\Lambda S_{z,\widetilde\rho}(s)}
\,ds
|
\leq C
\label{eq:ps-cells}
\end{align}
for the intervals contained in $S_0\leq s\leq\frac13\log(2+\Lambda)$. On $s\geq\frac13\log(2+\Lambda)$, \eqref{eq:ps-B} gives
\begin{align}
\sqrt\Lambda
\int_{\frac13\log(2+\Lambda)}^\infty
(1+s)e^{-2s}\,ds
\leq C.
\label{eq:ps-Stail}
\end{align}
Equations \eqref{eq:ps-yr}, \eqref{eq:ps-Sint}, \eqref{eq:ps-cells}, and \eqref{eq:ps-Stail} imply
\begin{align}
\sup_{\Lambda\geq1,\ \widetilde\rho\geq0,\ z\in[-1/2,1]}
\Big|
\Lambda
\int_0^{w_0}
w\int_0^\infty
A(y)e^{i\Lambda\Phi_+(y,w;z)}
\,dy\,dw
\Big|
\leq C.
\label{eq:ps-pos}
\end{align}
Equations \eqref{eq:ps-neg} and \eqref{eq:ps-pos} control the plus phase for $0<w\leq w_0$ and every $z\in[-1,1]$.

\subsubsection{The phase $\Phi_+$ with $w_0\leq w\leq4K'_z(0)$. } Since $K'_z(y)>0$ and $\lim_{y\to\infty}K'_z(y)=0$, we can choose $Y>0$ so that
\begin{align}
K'_z(y)\leq\frac{w_0}{4}
\qquad
(y\geq Y,\ z\in[-1,1]).
\label{eq:ps-Y}
\end{align}
For $y\geq Y$ and $w_0\leq w\leq4K'_z(0)$, equations \eqref{eq:ps-ph} and \eqref{eq:ps-Y} give
\begin{align}
|\partial_y\Phi_+(y,w;z)|
=
w\Big(\frac{w}{2}-K'_z(y)\Big)
\geq\frac{w^2}{4}.
\label{eq:ps-yd}
\end{align}
One integration by parts in $y$, using \eqref{eq:ps-lam}, \eqref{eq:ps-asym}, and \eqref{eq:ps-yd}, gives
\begin{align}
\sup_{\Lambda\geq1,\ \widetilde\rho\geq0,\ z\in[-1,1]}
|
\Lambda
\int_{w_0}^{4K'_z(0)}
w\int_Y^\infty
A(y)e^{i\Lambda\Phi_+(y,w;z)}
\,dy\,dw
|
\leq C.
\label{eq:ps-ybd}
\end{align}

Consider $z\in[-3/4,1]$ and $0\leq y\leq2Y$. The critical equations for \eqref{eq:ps-ph} are
\begin{align}
\begin{split}
w&=2K'_z(y),\\
\widetilde\rho
&=
4K'_z(y)^2\big(K_z(y)-2yK'_z(y)\big).
\end{split}
\label{eq:ps-c2}
\end{align}
Equation \eqref{eq:Kz-property} shows that \eqref{eq:ps-c2} has at most one solution. At an interior solution of \eqref{eq:ps-c2},
\begin{align}
\begin{split}
\det D^2_{y,w}\Phi_+(y,w;z)
=
\frac{1}{4K'_z(y)}
\frac{d}{dy}
\big(
4K'_z(y)^2\big(K_z(y)-2yK'_z(y)\big)
\big)
<0.
\end{split}
\label{eq:ps-h2}
\end{align}
At the boundary $y=0$, the critical values are
\begin{align*}
\begin{split}
w&=\frac{2-z}{3},
\qquad
\widetilde\rho=\frac{(1+z)(2-z)^2}{9},
\end{split}
\end{align*}
and the Hessian determinant is
\begin{align}
\begin{split}
\det D^2_{y,w}\Phi_+(0,w;z)
&=\frac{z^2+2z-4}{20}.
\end{split}
\label{eq:ps-h20}
\end{align}
The determinant in \eqref{eq:ps-h20} is uniformly separated from zero for $z\in[-3/4,1]$. Equations \eqref{eq:ps-c2}, \eqref{eq:ps-Y}, \eqref{eq:ps-h2}, and \eqref{eq:ps-h20} give a uniform inverse-Hessian bound for every critical point in
\begin{align*}
0\leq y\leq2Y,
\qquad
w_0\leq w\leq4K'_z(0),
\qquad
z\in[-3/4,1].
\end{align*}
For bounded $\widetilde\rho$, stationary phase in $(y,w)$ with the large parameter \eqref{eq:ps-lam} gives a factor $\Lambda^{-1}$. This cancels the exterior factor $\Lambda$ in \eqref{eq:ps-dec}, and therefore
\begin{align}
\sup_{\substack{\Lambda\geq1,\ 0\leq\widetilde\rho\leq C_0,\\ z\in[-3/4,1]}}
\Big|
\Lambda
\int_{w_0}^{4K'_z(0)}
w\int_0^{2Y}
A(y)e^{i\Lambda\Phi_+(y,w;z)}
\,dy\,dw
\Big|
\leq C.
\label{eq:ps-mid0}
\end{align}
For $\widetilde\rho\geq C_0$ with $C_0$ sufficiently large,
\begin{align}
\partial_w\Phi_+(y,w;z)
=
-yw+K_z(y)-\frac{\widetilde\rho}{w^2}
\label{eq:ps-rhod}
\end{align}
satisfies
\begin{align}
|\partial_w\Phi_+(y,w;z)|\geq c\widetilde\rho
\label{eq:ps-rhol}
\end{align}
on the compact domain of \eqref{eq:ps-mid0}. One integration by parts in $w$, using \eqref{eq:ps-lam}, \eqref{eq:ps-rhod}, and \eqref{eq:ps-rhol}, gives
\begin{align}
\sup_{\substack{\Lambda\geq1,\ \widetilde\rho\geq C_0,\\ z\in[-3/4,1]}}
\Big|
\Lambda
\int_{w_0}^{4K'_z(0)}
w\int_0^{2Y}
A(y)e^{i\Lambda\Phi_+(y,w;z)}
\,dy\,dw
\Big|
\leq C.
\label{eq:ps-mid1}
\end{align}
Equations \eqref{eq:ps-ybd}, \eqref{eq:ps-mid0}, and \eqref{eq:ps-mid1} imply
\begin{align}
\sup_{\Lambda\geq1,\ \widetilde\rho\geq0,\ z\in[-3/4,1]}
\Big|
\Lambda
\int_{w_0}^{4K'_z(0)}
w\int_0^\infty
A(y)e^{i\Lambda\Phi_+(y,w;z)}
\,dy\,dw
\Big|
\leq C.
\label{eq:ps-mid}
\end{align}

Since the angular set $q\cos\theta<-3/4$ is empty for $q\leq3/4$, we may assume $q>3/4$. Write
\begin{align*}
\theta=\pi\pm\varphi,
\qquad
z=-q\cos\varphi,
\qquad
0\leq\varphi\leq\arccos\big(3/(4q)\big).
\end{align*}
Define
\begin{align}
\Phi_{q,\widetilde\rho}(y,w,\varphi)
=
-\frac{yw^2}{2}
+wK_{-q\cos\varphi}(y)
+\frac{\widetilde\rho}{w}.
\label{eq:ps-ap}
\end{align}
The derivatives of \eqref{eq:ps-ap} are
\begin{align}
\begin{split}
\partial_y\Phi_{q,\widetilde\rho}
&= w \big(K'_{-q\cos\varphi}(y)-\frac{w}{2}\big),
\quad  \partial_w\Phi_{q,\widetilde\rho}
= -yw+K_{-q\cos\varphi}(y)-\frac{\widetilde\rho}{w^2}, 
\\ & \partial_\varphi\Phi_{q,\widetilde\rho} =
wq\sin\varphi\,
\frac{\sqrt y}{\sinh\sqrt y}.
\end{split}
\label{eq:ps-apd}
\end{align}
Hence every critical point of \eqref{eq:ps-ap} satisfies
\begin{align}
\begin{split}
\varphi&=0, \quad w = 2K'_{-q}(y), \quad 
\widetilde\rho =
4K'_{-q}(y)^2\big(K_{-q}(y)-2yK'_{-q}(y)\big).
\end{split}
\label{eq:ps-apc}
\end{align}
Since
\begin{align*}
K_{-q}(y)-2yK'_{-q}(y)
=
\frac{y(1-q\cosh\sqrt y)}{\sinh^2\sqrt y},
\end{align*}
every solution of \eqref{eq:ps-apc} satisfies
\begin{align}
0\leq y
\leq
\big(\operatorname{arcosh}(1/q)\big)^2
\leq
\big(\operatorname{arcosh}(4/3)\big)^2.
\label{eq:ps-apy}
\end{align}
At an interior critical point, all mixed second derivatives containing exactly one $\varphi$-derivative vanish, and
\begin{align}
\begin{split}
\det D^2_{y,w,\varphi}\Phi_{q,\widetilde\rho}
=
\frac{wq\sqrt y}{\sinh\sqrt y}
\frac{1}{4K'_{-q}(y)}
\frac{d}{dy}
\big(
4K'_{-q}(y)^2
\big(K_{-q}(y)-2yK'_{-q}(y)\big)
\big)
\neq0.
\end{split}
\label{eq:ps-h3}
\end{align}
At $y=0$,
\begin{align}
\begin{split}
w&=\frac{2+q}{3},\\
\det D^2_{y,w,\varphi}\Phi_{q,\widetilde\rho}(0,w,0)
&=
\frac{wq(q^2-2q-4)}{20}.
\end{split}
\label{eq:ps-h30}
\end{align}
Equations \eqref{eq:Kz-property}, \eqref{eq:ps-apy}, \eqref{eq:ps-h3}, and \eqref{eq:ps-h30} give a uniform inverse-Hessian bound for $q\in[3/4,1]$.

For bounded $\widetilde\rho$, stationary phase in $(y,w,\varphi)$ with the large parameter \eqref{eq:ps-lam} gives a factor $\Lambda^{-3/2}$. Multiplication by the exterior factor $\Lambda$ in \eqref{eq:ps-dec} gives
\begin{align*}
\Lambda
\Big|
\iiint
e^{i\Lambda\Phi_{q,\widetilde\rho}(y,w,\varphi)}
A(y)w
\,dy\,dw\,d\varphi
\Big|
\leq C\Lambda^{-1/2}
\leq C
\end{align*}
on a fixed neighborhood of the critical set \eqref{eq:ps-apc}. On the complement of that fixed neighborhood in the compact domain determined by \eqref{eq:ps-apy}, the gradient in \eqref{eq:ps-apd} is uniformly separated from zero, and the identity
\begin{align*}
e^{i\Lambda\Phi_{q,\widetilde\rho}}
=
\frac{\nabla\Phi_{q,\widetilde\rho}}
{i\Lambda|\nabla\Phi_{q,\widetilde\rho}|^2}
\cdot
\nabla e^{i\Lambda\Phi_{q,\widetilde\rho}}
\end{align*}
gives an $O(1)$ contribution after multiplication by the exterior factor $\Lambda$. For sufficiently large $\widetilde\rho$, the second equation in \eqref{eq:ps-apd} satisfies
\begin{align}
|\partial_w\Phi_{q,\widetilde\rho}|
\geq c\widetilde\rho
\label{eq:ps-apr}
\end{align}
on the compact $(y,w,\varphi)$-domain, and one integration by parts in $w$, using \eqref{eq:ps-lam} and \eqref{eq:ps-apr}, gives an $O(1)$ contribution. The domain $y\geq Y$ is controlled by \eqref{eq:ps-ybd}. Therefore
\begin{align}
\begin{split}
&
\Big|
\int_{\{\theta:q\cos\theta<-3/4\}}
\Lambda
\int_{w_0}^{4K'_{q\cos\theta}(0)}
w\int_0^\infty
A(y)e^{i\Lambda\Phi_+(y,w;q\cos\theta)}
\,dy\,dw\,d\theta
\Big|
\leq C.
\end{split}
\label{eq:ps-apbd}
\end{align}

The minus branch is bounded by \eqref{eq:ps-mbd}. The plus branch on $0<w\leq w_0$ is bounded by \eqref{eq:ps-neg} and \eqref{eq:ps-pos}. The plus branch on $w_0\leq w\leq4K'_z(0)$ and $z\in[-3/4,1]$ is bounded by \eqref{eq:ps-mid}. The angularly integrated plus branch on $w_0\leq w\leq4K'_z(0)$ and $z<-3/4$ is bounded by \eqref{eq:ps-apbd}. Substitution of \eqref{eq:ps-mbd}, \eqref{eq:ps-neg}, \eqref{eq:ps-pos}, \eqref{eq:ps-mid}, and \eqref{eq:ps-apbd} into \eqref{eq:ps-dec} gives
\begin{align}
\Big|
\int_0^{2\pi}
\widehat P^\ell(\rho;q\cos\theta)\,d\theta
\Big|
\leq C
\qquad
(\ell\geq1).
\label{eq:ps-lg}
\end{align}
Equations \eqref{eq:ps-smell} and \eqref{eq:ps-lg} prove \eqref{eq:ps-est}.
\end{proof}

\subsection[High-frequency contribution]{High-frequency contribution}
In the region $\nu \geq 4\mathrm{L}K'_z(0)$, we have $|\mathrm{L}K'_z(y)-\frac{\nu}{2}| \geq \frac{\nu}{4}$.
Combining this with \eqref{eq:phase-derivatives}, we have
\begin{align*}
|\partial_y\Phi_+(y,\nu)|\geq\frac{\nu^2}{4}, \qquad
|\partial_y\Phi_-(y,\nu)|\geq\frac{\nu^2}{2}.
\end{align*}

Next we consider the second integral in \eqref{eq:Phat-decomposition}. We will prove that there is a constant $C>0$, independent of all parameters, such that
\begin{align}
\begin{split}
\Bigg|\int_0^{2\pi}\lim_{V\to\infty}\int_{4\mathrm{L}K'_z(0)}^V\nu\int_0^\infty y\operatorname{csch}^2(\sqrt y)e^{-iy\nu^2/2}\sin(\mathrm{L}\nu K_z(y)+a/\nu)\,dy\,d\nu\,d\theta\Bigg| \leq C.
\end{split}
\label{eq:Phat-second-bound}
\end{align}
To this end, we first record the following one-dimensional oscillatory estimate.

\begin{lemma}\label{lem:uniform-oscillatory-tails}
Let $A,B,C\geq0$ and $U>0$ satisfy $U\geq2C$. Define
\begin{align}
\begin{split}
E(A,B;U) = \int_U^\infty\sin(A\xi+B/\xi)\frac{d\xi}{\xi},
\\ G(A,B,C;U) = \int_U^\infty\frac{\sin(A\xi+B/\xi)}{\xi-C}\,d\xi.
\end{split}
\label{eq:uniform-oscillatory-tails}
\end{align}
Then the improper integrals in \eqref{eq:uniform-oscillatory-tails} exist and satisfy
\begin{align*}
|E(A,B;U)|+|G(A,B,C;U)|\leq C_0,
\end{align*}
where $C_0$ is a constant independent of $A,B,C,U$.
\end{lemma}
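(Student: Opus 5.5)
We first reduce $G$ to $E$. For $\xi\ge U\ge 2C$ we have
\[
\frac{1}{\xi-C}-\frac1\xi=\frac{C}{\xi(\xi-C)},\qquad 0\le \frac{C}{\xi(\xi-C)}\le \frac{2C}{\xi^2}.
\]
Hence
\[
|G(A,B,C;U)-E(A,B;U)|\le \int_U^\infty \frac{2C}{\xi^2}\,d\xi=\frac{2C}{U}\le 1.
\]
This difference integral converges absolutely, so $G$ exists exactly when $E$ does. It therefore suffices to bound $E$ uniformly.

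For $E$, substitute $\xi=e^{t}$. Since $d\xi/\xi=dt$,
\[
E=\int_{\log U}^\infty \sin(\phi(t))\,dt,\qquad \phi(t)=Ae^{t}+Be^{-t}.
\]
\textbf{Trivial cases.} If $A=0$, then $E=\int_U^\infty \sin(B/\xi)\,d\xi/\xi=\int_0^{B/U}\sin(\eta)\,d\eta/\eta$, which is bounded by $\sup_{x>0}|\mathrm{Si}(x)|$. If $A>0$ and $B=0$, then $E=\int_{AU}^\infty \sin(\eta)\,d\eta/\eta$, which is also uniformly bounded. Both reduce to the standard bounded sine integral.

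\textbf{Main case $A,B>0$.} Use the dimensionless variable $\eta=A\xi$ and set $\lambda=AB>0$. Then
\[
E=\int_{AU}^\infty \sin\!\Big(\eta+\frac{\lambda}{\eta}\Big)\frac{d\eta}{\eta}.
\]
The phase $\psi(\eta)=\eta+\lambda/\eta$ has derivative $\psi'(\eta)=1-\lambda/\eta^2$. This is monotone increasing in $\eta$ and vanishes only at $\eta_*=\sqrt\lambda$. Split the range into three pieces: $\eta\ge 2\sqrt\lambda$, $\eta\le \sqrt\lambda/2$, and the middle window, each intersected with $[AU,\infty)$.

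On $[2\sqrt\lambda,\infty)$ we have $\psi'\ge 3/4$ and $\psi'$ is monotone. Write the integrand as $(1/\eta)\sin\psi$ and integrate by parts: $\sin\psi=-(\cos\psi)'/\psi'$ and $1/(\eta\psi')$ is monotone, so the boundary terms plus the total variation give a bound of $O(1/\max(AU,2\sqrt\lambda))$. This is fine when $\max(AU,2\sqrt\lambda)\ge 1$; on any part of $[0,1]$ we instead note $|\sin(\eta+\lambda/\eta)|\le 1$ and gain nothing directly. So the cleaner route is to revert to the $t$-variable on each piece and apply van der Corput there.

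In $t$, $\phi''(t)=Ae^t+Be^{-t}\ge 2\sqrt{AB}=2\sqrt\lambda$ and $\phi'(t)=Ae^t-Be^{-t}$ is monotone. Also $|\phi'(t)|\ge \phi(t)\cdot\tfrac35$ away from $t_*$ by a fixed multiplicative factor, say where $e^{t}\ge 2e^{t_*}$ or $e^t\le e^{t_*}/2$.

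\textbf{Region $e^t\ge 2e^{t_*}$.} Here $\phi'(t)\ge \tfrac34 Ae^t$. First-derivative van der Corput on dyadic $t$-blocks $[t_0+k,t_0+k+1]$ gives bounds of $C/(Ae^{t_0+k})$, which are summable and dominated by $C/(Ae^{t_0})$. Once $Ae^t\ge 1$ this is $O(1)$. The remaining part of the region has $Ae^t<1$, and there $\sin\phi$ is small relative to length only if $\phi$ is small, so I will instead compare with the pure tail $\int\sin(Ae^t)\,dt$. The error is $|Be^{-t}|\le \tfrac14 Ae^t$, and $\int_{Ae^t<1}Ae^t\,dt\le 1$. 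The pure tail is a sine integral and hence bounded.

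\textbf{Region $e^t\le e^{t_*}/2$.} This is symmetric: compare with $\int\sin(Be^{-t})\,dt$ where $Be^{-t}<1$, and use van der Corput where $Be^{-t}\ge 1$.

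\textbf{Window $|t-t_*|\le\log 2$.} This window has bounded length $2\log2$, so its contribution is trivially at most $2\log 2$.

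Existence of the improper limits follows from the same integration by parts at infinity. The main obstacle is organizing the transitions (where $Ae^t$ or $Be^{-t}$ crosses $1$) so that every constant is independent of $A,B,U$. The comparison-with-sine-integral trick handles these transitions.
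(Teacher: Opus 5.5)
Your proof is correct. It follows a genuinely different route from the paper in the main case $A,B>0$; the reduction of $G$ to $E$ and the degenerate cases $A=0$ or $B=0$ are exactly as in the paper.

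The paper rescales $\xi=\sqrt{B/A}\,x$ to obtain $\int_u^\infty\sin(c(x+x^{-1}))\,dx/x$ with $c=\sqrt{AB}$. It then splits twice: into $c\le1$ (global comparison with $\sin(cx)$, error $c/u$) versus $c\ge1$ (a sublevel-set bound of length $4c^{-1/2}$ near the critical point $x=1$, plus integration by parts away from it), and into $u\ge1$ versus $u<1$, handled by the reflection $x\mapsto x^{-1}$.

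Your logarithmic variable $t=\log\xi$ turns $d\xi/\xi$ into $dt$ and the phase into $2\sqrt{AB}\cosh(t-t_*)$ with $e^{t_*}=\sqrt{B/A}$. This gives three simplifications:
\begin{itemize}
\item A multiplicative neighbourhood of the critical point has $t$-length $2\log 2$, so it is bounded trivially.
\item The two outer regions are mirror images under $t\mapsto 2t_*-t$, which is the paper's reflection in disguise.
\item On each outer region a single threshold (the dominant exponential equal to $1$) separates first-derivative van der Corput from comparison with the sine integral, with error at most $\int_{Ae^t<1}Ae^t\,dt=1$.
\end{itemize}
As a result you need no case analysis in $c$ or $u$.

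What the paper's finer treatment of the critical point buys is quantitative decay $O(c^{-1/2})$ when $c\ge1$. Your trivial window bound discards this, but the lemma does not need it. Your unused remark $\phi''\ge 2\sqrt{AB}$ would in fact recover that decay via the second-derivative van der Corput lemma on the window.

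In a write-up, define $t_*$ explicitly. Also delete the abandoned paragraph in the variable $\eta=A\xi$ and the unused remark $|\phi'|\ge\frac{3}{5}\phi$.
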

\begin{proof}
We first estimate $E(A,B;U)$. If $A=B=0$, then $E(A,B;U)=0$. If $A=0$ and $B>0$, the substitution $t=B/\xi$ gives
\begin{align*}
E(0,B;U)=\int_0^{B/U}\frac{\sin t}{t}\,dt.
\end{align*}
If $A>0$ and $B=0$, the substitution $t=A\xi$ gives
\begin{align*}
E(A,0;U)=\int_{AU}^\infty\frac{\sin t}{t}\,dt.
\end{align*}
For $R\geq r\geq1$, integration by parts gives
\begin{align*}
\int_r^R\frac{\sin t}{t}\,dt=\frac{\cos r}{r}-\frac{\cos R}{R}-\int_r^R\frac{\cos t}{t^2}\,dt,
\end{align*}
and therefore
\begin{align}
\big|\int_r^R\frac{\sin t}{t}\,dt\big|\leq\frac{3}{r}.
\label{eq:sine-integral-bound}
\end{align}
This proves the required bound when $A=0$ or $B=0$.
Assume now that $A,B>0$, and set
\begin{align*}
\xi_*=\sqrt{B/A},\quad c=\sqrt{AB},\quad u=U/\xi_*.
\end{align*}
Changing the variables by $\xi=\xi_* x$, we have
\begin{align}
E(A,B;U)=\int_u^\infty\sin\big(c(x+x^{-1})\big)\frac{dx}{x}.
\label{eq:uniform-tail-scaled}
\end{align}
Suppose first that $u\geq1$. If $0<c \leq 1$, then
\begin{align*}
|\sin\big(c(x+x^{-1})\big)-\sin(cx)|\leq\frac{c}{x},
\end{align*}
and consequently
\begin{align*}
\Big|E(A,B;U)-\int_u^\infty\frac{\sin(cx)}{x}\,dx\Big|\leq c\int_u^\infty\frac{dx}{x^2}=\frac{c}{u}\leq1.
\end{align*}
The substitution $t=cx$ gives
\begin{align*}
\int_u^\infty\frac{\sin(cx)}{x}\,dx=\int_{cu}^\infty\frac{\sin t}{t}\,dt,
\end{align*}
which is uniformly bounded by \eqref{eq:sine-integral-bound} and $|\sin t|/t\leq1$.

Now we consider the case $c\geq1$, and set
\begin{align*}
\psi(x)=c(x+x^{-1}).
\end{align*}
We have  
\begin{align*}
\psi'(x)=c(1-x^{-2}),\quad \psi''(x)=2cx^{-3}.
\end{align*}
For $1\leq x\leq2$, one has $\psi''(x)\geq c/4$ and $\psi'(1)=0$. Therefore the subset of $u\leq x\leq2$ on which $\psi'(x)\leq\sqrt c$ has length at most $4c^{-1/2}$ and contributes at most $4c^{-1/2}$ to \eqref{eq:uniform-tail-scaled}. On its complement, $\psi'(x)\geq\sqrt c$, and $x\psi'(x)=c(x-x^{-1})$
is strictly increasing. 
Integrating by parts, for $\alpha<\beta$, we have
\begin{align}
\begin{split}
\int_\alpha^\beta\frac{\sin\psi(x)}{x}\,dx&=\frac{\cos\psi(\alpha)}{\alpha\psi'(\alpha)}-\frac{\cos\psi(\beta)}{\beta\psi'(\beta)} +\int_\alpha^\beta\cos\psi(x)\frac{d}{dx}\big(\frac{1}{x\psi'(x)}\big)\,dx.
\end{split}
\label{eq:uniform-tail-ibp}
\end{align}
Since $1/(x\psi'(x))$ is positive and decreasing, \eqref{eq:uniform-tail-ibp} is bounded in absolute value by $3c^{-1/2}$. For $x\geq2$, one has $\psi'(x)\geq3c/4$. Applying \eqref{eq:uniform-tail-ibp} from $\max(u,2)$ to $R$ and letting $R\to\infty$ gives a bound $C_0c^{-1}$. Consequently, \eqref{eq:uniform-tail-scaled} is uniformly bounded when $u\geq1$.
If $0<u<1$, the substitution $x\mapsto x^{-1}$ in the integral from $u$ to $1$ gives
\begin{align*}
\int_u^1\sin\big(c(x+x^{-1})\big)\frac{dx}{x}=\int_1^{1/u}\sin\big(c(x+x^{-1})\big)\frac{dx}{x}.
\end{align*}
Consequently, we have
\begin{align*}
E(A,B;U)=2\int_1^\infty\sin\big(c(x+x^{-1})\big)\frac{dx}{x}-\int_{1/u}^\infty\sin\big(c(x+x^{-1})\big)\frac{dx}{x}.
\end{align*}
The estimate for \eqref{eq:uniform-tail-scaled} with the lower bound $\geq1$ applies to both integrals. Consequently, we obtain
\begin{align*}
|E(A,B;U)|\leq C_0.
\end{align*}
It remains to estimate $G(A,B,C;U)$ in \eqref{eq:uniform-oscillatory-tails}. Since $U\geq2C$ and $\frac{1}{\xi-C}=\frac{1}{\xi}+\frac{C}{\xi(\xi-C)}\quad (\xi\geq U)$, we have
\begin{align*}
G(A,B,C;U)=E(A,B;U)+\int_U^\infty\frac{C\sin(A\xi+B/\xi)}{\xi(\xi-C)}\,d\xi.
\end{align*}
For $\xi\ge U\ge2C$,
\begin{align*}
0\le \frac{C}{\xi(\xi-C)}\le\frac{2C}{\xi^2},
\end{align*}
and therefore
\begin{align*}
\left|\int_U^\infty\frac{C\sin(A\xi+B/\xi)}{\xi(\xi-C)}\,d\xi\right|
\le\frac{2C}{U}\le1.
\end{align*}
The estimate for $E(A,B;U)$ now gives $|G(A,B,C;U)|\lesssim 1$.
\end{proof}
Now we prove the estimate controlling the integrand in \eqref{eq:Phat-second-bound} for fixed $\theta$, which implies \eqref{eq:Phat-second-bound}.

\begin{lemma}\label{lem:ordered-high-frequency-tail}
Let $-1\leq z\leq1$, $\ell>0$, $a\geq0$, and $U>0$ satisfy
\begin{align*}
U\geq4\ell K'_z(0).
\end{align*}
Define the amplitude by
\begin{align*}
A(y)=y\operatorname{csch}^2(\sqrt y),\quad A(0)=1.
\end{align*}
Then the ordered tail
\begin{align}
\begin{split}
P_U(a,\ell,z)=\lim_{V\to\infty}\int_U^V\xi\int_0^\infty A(y)e^{-iy\xi^2/2}\sin(\ell\xi K_z(y)+a/\xi)\,dy\,d\xi
\end{split}
\label{eq:ordered-high-frequency-tail}
\end{align}
exists and satisfies
\begin{align*}
|P_U(a,\ell,z)|\leq C,
\end{align*}
uniformly in $a,\ell,z,U$.
\end{lemma}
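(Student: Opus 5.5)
The plan is to do one integration by parts in $y$ for each fixed $\xi\ge U$, and then integrate the resulting explicit one-dimensional expressions in $\xi$ using Lemma~\ref{lem:uniform-oscillatory-tails}. Write $\sin(\bullet)=(2i)^{-1}(e^{i\bullet}-e^{-i\bullet})$, so the inner $y$-integral becomes $\frac{1}{2i}\int_0^\infty A(y)\bigl(e^{i\xi\Phi_+(y,\xi)}-e^{i\xi\Phi_-(y,\xi)}\bigr)dy$, with phases $\Phi_\pm=-y\xi/2\pm\ell K_z(y)$ and the factor $e^{\pm ia/\xi}$ pulled out. Since $\xi\ge U\ge4\ell K_z'(0)\ge4\ell K_z'(y)$ by Proposition~\ref{prop:phase-prop}, we have
\[
\partial_y\bigl(-y\xi^2/2\pm\ell\xi K_z(y)\bigr)=-\xi\bigl(\xi/2\mp\ell K'_z(y)\bigr),\qquad \xi/2\mp\ell K'_z(y)\in[\xi/4,\,3\xi/4].
\]
So the phase is nonstationary with derivative of size $\xi^2$. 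Integrating by parts once, with the Abel regulator $e^{-\varepsilon y}$ kept, gives a boundary term at $y=0$ and a remainder:
\[
\xi\int_0^\infty A e^{i(\cdots)}dy=\frac{A(0)\,e^{\pm i a/\xi}e^{\pm i\ell\xi(1+z)}}{-i(\xi/2\mp\ell K_z'(0))}\;+\;\frac{1}{i}\int_0^\infty e^{i(\cdots)}\,\partial_y\Bigl(\frac{A(y)}{\xi/2\mp\ell K'_z(y)}\Bigr)dy .
\]
The remainder is not absolutely integrable in $\xi$ by itself, since $\partial_y(\cdot)=O(|A'|/\xi+A\ell|K''_z|/\xi^2)$. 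I would therefore integrate it by parts once more in $y$. This gains another factor $\xi^{-2}$, and the new boundary term at $y=0$ is $O(\xi^{-3})$ times bounded quantities, using $A,A',A''\in L^1$ from \eqref{eq:ps-asym}. A term of size $O(\xi^{-2})$ contributes at most $\int_U^\infty\xi^{-2}\,d\xi=U^{-1}$, which is not uniform when $U$ is small. The fix is to bound each remainder by $\min(C,\,C\xi^{-2})$: for $\xi\le1$ I use the trivial bound $\xi\|A\|_1$ on the whole inner integral, and for $\xi\ge1$ the two integrations by parts. Care is needed here, because $\ell K'_z$ appears in the remainder amplitudes and $\ell$ is unbounded. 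The constraint $\ell K_z'(y)\le\xi/4$, together with $|K''_z|\lesssim K'_z/(1+y)$ from \eqref{eq:ps-asym}, keeps those terms of size $\xi^{-2}\cdot\int|A|/(1+y)$, so they remain uniform in $\ell$.

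The boundary terms need their own treatment. Their $\xi$-integrals have exactly the form of Lemma~\ref{lem:uniform-oscillatory-tails}: writing $\xi/2\mp\ell K'_z(0)=\tfrac12(\xi-C_\pm)$ with $C_\pm=\pm2\ell K'_z(0)$, we get
\[
\int_U^\infty \frac{e^{\pm i(\ell(1+z)\xi+a/\xi)}}{\xi-C_\pm}\,d\xi .
\]
For the $+$ branch, $C_+=2\ell K'_z(0)\le U/2$, so the hypothesis $U\ge2C$ of Lemma~\ref{lem:uniform-oscillatory-tails} holds and the imaginary part is exactly $G(A,B,C;U)$ with $A=\ell(1+z)\ge0$ and $B=a$. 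For the $-$ branch $C<0$, and I write $1/(\xi+|C|)=1/\xi-|C|/(\xi(\xi+|C|))$; the correction term has size $|C|/\xi^2$ and integrates to at most $|C|/U\le1$. The real (cosine) parts are not covered by the lemma as stated. Recombining the two branches according to the sine structure of \eqref{eq:ordered-high-frequency-tail}, the leading $1/\xi$ pieces of the two exponentials combine into $\int_U^\infty\sin(A\xi+B/\xi)\,d\xi/\xi$, which is $E(A,B;U)$. The leftover terms differ by $O(\ell K'_z(0)/\xi^2)$ and integrate to $O(1)$ by the same $C/U$ argument.

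Existence of the limit $V\to\infty$ follows from the same decomposition, since each piece converges: the $\xi^{-2}$ remainders converge absolutely, and the $E$, $G$ integrals converge as improper integrals by Lemma~\ref{lem:uniform-oscillatory-tails}.

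I expect the main obstacle to be the bookkeeping. The key point is that after the first integration by parts the only non-absolutely-integrable piece in $\xi$ is the explicit $y=0$ boundary term. This requires the second integration by parts and uniform control of the amplitude derivatives in $\ell$, which uses $\xi\ge4\ell K'_z(0)$ and the monotonicity $K'_z(y)\le K'_z(0)$.
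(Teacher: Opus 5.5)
Your proposal is correct and is essentially the paper's proof: you split the sine into two exponentials, integrate by parts twice in $y$ using $\xi/2\mp\ell K'_z(y)\ge\xi/4$, use the trivial bound $\xi\|A\|_{L^1}$ on $[U,1]$, and handle the explicit $y=0$ boundary term with Lemma~\ref{lem:uniform-oscillatory-tails}, its cosine part being $O(\ell K'_z(0)/\xi^2)$ and hence integrable since $U\ge4\ell K'_z(0)$. The one bookkeeping point is that when $U<1$ the boundary-term integral should run over $[1,\infty)$ rather than $[U,\infty)$, which is still admissible because $1>U\ge4\ell K'_z(0)$; the paper handles this case the same way.
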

\begin{proof}
Recall the property of $K_z$ from Proposition~\ref{prop:phase-prop} and \eqref{eq:Kz-property}:
\begin{align*}
0<K'_z(y)\leq K'_z(0),\quad K'_z(0)=\frac{2-z}{6},\quad \frac{1}{6}\leq K'_z(0)\leq\frac{1}{2}.
\end{align*}
By \eqref{eq:Kz-definition}, $K_z(0)=1+z\geq0$. By \eqref{eq:ps-asym} and direct differentiation of \eqref{eq:Kz-definition},
\begin{align}
A,A',A''\in L^1(0,\infty),\quad A(0)=1,\quad \sup_{\substack{-1\leq z\leq1\\y\geq0}}\big(|K''_z(y)|+|K'''_z(y)|\big)\leq C.
\label{eq:ordered-tail-regularity}
\end{align}
Set the following quantities:
\begin{align}
\begin{split}
c&=\ell K'_z(0),\quad S(\xi)=\ell K_z(0)\xi+\frac{a}{\xi},\\
d_-(y,\xi)&=\frac{\xi}{2}-\ell K'_z(y),\quad d_+(y,\xi)=\frac{\xi}{2}+\ell K'_z(y).
\end{split}
\label{eq:ordered-tail-notation}
\end{align}
Since $K'_z(y)\leq K'_z(0)$, $K'_z(0)\geq1/6$, and $\xi\geq U\geq4c$,
\begin{align}
\begin{split}
d_-(y,\xi) \geq \frac{\xi}{2}-c\geq\frac{\xi}{4},\quad d_+(y,\xi)\geq\frac{\xi}{2}, \quad
\ell \leq\frac{3}{2}U\leq\frac{3}{2}\xi.
\end{split}
\label{eq:ordered-tail-denominators}
\end{align}
The sine in \eqref{eq:ordered-high-frequency-tail} satisfies
\begin{align*}
\begin{split}
e^{-iy\xi^2/2}\sin(\ell\xi K_z(y)+a/\xi)=\frac{1}{2i}\big(e^{i(-y\xi^2/2+\ell\xi K_z(y)+a/\xi)}-e^{i(-y\xi^2/2-\ell\xi K_z(y)-a/\xi)}\big).
\end{split}
\end{align*}
By \eqref{eq:ordered-tail-notation},
\begin{align*}
\partial_y e^{i(-y\xi^2/2+\ell\xi K_z(y)+a/\xi)}&=-i\xi d_-(y,\xi)e^{i(-y\xi^2/2+\ell\xi K_z(y)+a/\xi)},\\
\partial_y e^{i(-y\xi^2/2-\ell\xi K_z(y)-a/\xi)}&=-i\xi d_+(y,\xi)e^{i(-y\xi^2/2-\ell\xi K_z(y)-a/\xi)}.
\end{align*}
One integration by parts in $y$, using \eqref{eq:ordered-tail-regularity} and \eqref{eq:ordered-tail-denominators}, gives
\begin{align}
\begin{split}
&\xi\int_0^\infty A(y)e^{-iy\xi^2/2}\sin(\ell\xi K_z(y)+a/\xi)\,dy\\
&=B(\xi)-\frac{1}{2}\int_0^\infty\big(\frac{A(y)}{d_-(y,\xi)}\big)'e^{i(-y\xi^2/2+\ell\xi K_z(y)+a/\xi)}\,dy\\
&\quad+\frac{1}{2}\int_0^\infty\big(\frac{A(y)}{d_+(y,\xi)}\big)'e^{i(-y\xi^2/2-\ell\xi K_z(y)-a/\xi)}\,dy,
\end{split}
\label{eq:ordered-tail-first-ibp}
\end{align}
where the boundary terms at $y=\infty$ vanish and the boundary term at $y=0$ is
\begin{align}
B(\xi) =\frac{1}{2}\big(\frac{e^{-iS(\xi)}}{\xi/2+c}-\frac{e^{iS(\xi)}}{\xi/2-c}\big) =-\frac{4c\cos S(\xi)}{\xi^2-4c^2}-i\big(\frac{1}{\xi-2c}+\frac{1}{\xi+2c}\big)\sin S(\xi).
\label{eq:ordered-tail-boundary}
\end{align}
We integrate each of the two integrals on the right-hand side of \eqref{eq:ordered-tail-first-ibp} once more in $y$. The integral containing $d_-$ satisfies
\begin{align}
\begin{split}
&\int_0^\infty\big(\frac{A}{d_-}\big)'e^{i(-y\xi^2/2+\ell\xi K_z(y)+a/\xi)}\,dy\\
&=-\frac{i e^{iS(\xi)}}{\xi d_-(0,\xi)}\big(\frac{A}{d_-}\big)'(0)-\frac{i}{\xi}\int_0^\infty\big(\frac{1}{d_-}\big(\frac{A}{d_-}\big)'\big)'e^{i(-y\xi^2/2+\ell\xi K_z(y)+a/\xi)}\,dy.
\end{split}
\label{eq:ordered-tail-second-ibp-minus}
\end{align}
The integral containing $d_+$ satisfies
\begin{align}
\begin{split}
&\int_0^\infty\big(\frac{A}{d_+}\big)'e^{i(-y\xi^2/2-\ell\xi K_z(y)-a/\xi)}\,dy\\
&=-\frac{i e^{-iS(\xi)}}{\xi d_+(0,\xi)}\big(\frac{A}{d_+}\big)'(0)-\frac{i}{\xi}\int_0^\infty\big(\frac{1}{d_+}\big(\frac{A}{d_+}\big)'\big)'e^{i(-y\xi^2/2-\ell\xi K_z(y)-a/\xi)}\,dy.
\end{split}
\label{eq:ordered-tail-second-ibp-plus}
\end{align}
The boundary terms at $y=\infty$ again vanish by \eqref{eq:ordered-tail-regularity}. 
We compute
\begin{align*}
\begin{split}
\big(\frac{A}{d_\pm}\big)'&=\frac{A'}{d_\pm}-\frac{A d'_\pm}{d_\pm^2},\\
\big(\frac{1}{d_\pm}\big(\frac{A}{d_\pm}\big)'\big)'&=\frac{A''}{d_\pm^2}-\frac{3A'd'_\pm}{d_\pm^3}-\frac{A d''_\pm}{d_\pm^3}+\frac{3A(d'_\pm)^2}{d_\pm^4}.
\end{split}
\end{align*}
Here the derivatives satisfy
\begin{align*}
d'_\pm(y,\xi)=\pm\ell K''_z(y),\quad d''_\pm(y,\xi)=\pm\ell K'''_z(y).
\end{align*}
Equations \eqref{eq:ordered-tail-regularity} and \eqref{eq:ordered-tail-denominators} imply
\begin{align}
\big|\big(\frac{A}{d_\pm}\big)'(0)\big| \leq\frac{C}{\xi},
\quad \int_0^\infty\big|\big(\frac{1}{d_\pm}\big(\frac{A}{d_\pm}\big)'\big)'\big|\,dy \leq\frac{C}{\xi^2}.
\label{eq:ordered-tail-remainder-estimates}
\end{align}
Substituting \eqref{eq:ordered-tail-remainder-estimates} into \eqref{eq:ordered-tail-second-ibp-minus} and \eqref{eq:ordered-tail-second-ibp-plus}, and then into \eqref{eq:ordered-tail-first-ibp}, gives
\begin{align}
\begin{split}
\xi\int_0^\infty A(y)e^{-iy\xi^2/2}\sin(\ell\xi K_z(y)+a/\xi)\,dy=B(\xi)+O(\xi^{-3})\quad (\xi\geq\max(U,1)),
\end{split}
\label{eq:ordered-tail-expansion}
\end{align}
where the implicit constant is independent of $a,\ell,z,U$.
The contribution from the first term on the right hand side of \eqref{eq:ordered-tail-boundary} is finite since the lower-bound $U\geq4c$.
By Lemma~\ref{lem:uniform-oscillatory-tails}, applied with
\begin{align*}
A=\ell K_z(0),\quad B=a,\quad C=2c,
\end{align*}
the integral
\begin{align*}
\int_U^\infty\frac{\sin S(\xi)}{\xi-2c}\,d\xi=G(\ell K_z(0),a,2c;U)
\end{align*}
is uniformly bounded, since $K_z(0)=1+z \geq 0$ and $U\geq4c=2(2c)$. For the term with denominator $\xi+2c$ in \eqref{eq:ordered-tail-boundary}, we have
\begin{align}
\begin{split}
\int_U^\infty\frac{\sin S(\xi)}{\xi+2c}\,d\xi&=E(\ell K_z(0),a;U)-\int_U^\infty\frac{2c\sin S(\xi)}{\xi(\xi+2c)}\,d\xi.
\end{split}
\label{eq:ordered-tail-positive-denominator}
\end{align}
The first term on the right-hand side of \eqref{eq:ordered-tail-positive-denominator} is uniformly bounded by Lemma~\ref{lem:uniform-oscillatory-tails}. For the second term, $\xi+2c\ge\xi$ and hence
\begin{align*}
\left|\int_U^\infty\frac{2c\sin S(\xi)}{\xi(\xi+2c)}\,d\xi\right|
\le 2c\int_U^\infty\frac{d\xi}{\xi^2}
\le\frac{2c}{U}\le\frac12,
\end{align*}
because $U\ge4c$. Therefore, we have
\begin{align}
\big|\int_U^\infty B(\xi)\,d\xi\big|\leq C.
\label{eq:ordered-tail-boundary-integral}
\end{align}
If $U<1$, then
\begin{align}
\begin{split}
\int_U^1\big|\xi\int_0^\infty A(y)e^{-iy\xi^2/2}\sin(\ell\xi K_z(y)+a/\xi)\,dy\big|\,d\xi&\leq\|A\|_{L^1}\int_U^1\xi\,d\xi\leq\frac{1}{2}\|A\|_{L^1}.
\end{split}
\label{eq:ordered-tail-compact-frequency}
\end{align}
If $U\geq1$, equations \eqref{eq:ordered-tail-expansion} and
\eqref{eq:ordered-tail-boundary-integral} prove the existence of
\eqref{eq:ordered-high-frequency-tail} and the uniform bound
\begin{align*}
|P_U(a,\ell,z)|\leq C.
\end{align*}
If $U<1$, apply \eqref{eq:ordered-tail-boundary-integral} with $U$ replaced by $1$.
The remainder in \eqref{eq:ordered-tail-expansion} is absolutely integrable for $\xi\geq1$,
while \eqref{eq:ordered-tail-compact-frequency} controls the integral from $U$ to $1$.
This proves the same conclusion when $U<1$.
\end{proof}


\begin{proposition}
\label{prop:difference-est}
For every $a\ge0$ and every $0\le \mathrm{R}\le \mathrm{L}$,
\begin{align*}
|\mathfrak D(a,\mathrm{L},\mathrm{R})|\leq Ca,
\end{align*}
where $C$ is an absolute constant independent of $a$, $\mathrm{L}$, and $\mathrm{R}$.
\end{proposition}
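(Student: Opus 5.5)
The estimate will follow by integrating a uniform bound on the $a$-derivative from $a=0$, where the difference vanishes identically.

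\emph{Case $\mathrm{L}=0$.} Then $\mathrm{R}=0$, and the bound $|\mathfrak D(a,0,0)|\le Ca$ is exactly \eqref{eq:mkD-est'}.

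\emph{Case $\mathrm{L}>0$.} Write $q=\mathrm{R}/\mathrm{L}\in[0,1]$ and $z=q\cos\theta$, as in Remark~\ref{rem:L0}. The argument has three steps.

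\textbf{Step 1 (vanishing at $a=0$).} At $a=0$ the two cosines in \eqref{eq:direct-difference} coincide. Hence every regularized integral $\mathfrak D_{\varepsilon,\delta,N,\sigma}(0,\mathrm{L},\mathrm{R})$ vanishes.

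\textbf{Step 2 (uniform derivative bound).} Work at finite regulators and apply the fundamental theorem of calculus:
\begin{align*}
\mathfrak D_{\varepsilon,\delta,N,\sigma}(a,\mathrm{L},\mathrm{R})=\int_0^a \partial_b\mathfrak D_{\varepsilon,\delta,N,\sigma}(b,\mathrm{L},\mathrm{R})\,db .
\end{align*}
By \eqref{eq:direct-difference-derivative}, the $b$-derivative is $-\frac1{4\pi}\int_0^{2\pi}\widehat P_\theta(b)\,d\theta$. Split each $\widehat P_\theta(b)$ at $\nu=4\mathrm{L}K'_z(0)$ as in \eqref{eq:Phat-decomposition}.
\begin{itemize}
\item The low-frequency piece is $\widehat P^{\mathrm{L}}(b;q\cos\theta)$ in the notation of \eqref{eq:ps-def}, with $\ell=\mathrm{L}$ and $\rho=b$. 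Lemma~\ref{lem:ps} bounds its $\theta$-integral by an absolute constant $C$, uniformly in $b\ge0$, $\mathrm{L}>0$ and $q\in[0,1]$.
\item The high-frequency piece is $P_U(b,\mathrm{L},z)$ with $U=4\mathrm{L}K'_z(0)$. Lemma~\ref{lem:ordered-high-frequency-tail} bounds it uniformly for each fixed $\theta$. Integrating over $\theta\in[0,2\pi]$ costs only a factor $2\pi$, which gives \eqref{eq:Phat-second-bound}.
\end{itemize}
Together these give $|\partial_b\mathfrak D(b,\mathrm{L},\mathrm{R})|\le C$ for all $b\ge0$, uniformly in $\mathrm{L}$ and $\mathrm{R}$. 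Integrating in $b$ from $0$ to $a$ then gives $|\mathfrak D(a,\mathrm{L},\mathrm{R})|\le Ca$.

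\textbf{Step 3 (passage to the ordered Abel limit).} I expect this to be the main obstacle. The derivative bounds in Lemmas~\ref{lem:ps} and~\ref{lem:ordered-high-frequency-tail} are proved for the unregularized, ordered limits. The fundamental theorem of calculus, however, is legitimate only at finite regulators. One therefore needs either of the following:
\begin{itemize}
\item the same uniform bounds for the regularized derivatives. Inserting $e^{-\varepsilon(s+s^{-1})}e^{-\sigma p^2}$ and truncating to $[\delta,N]$ only multiplies the amplitude by smooth factors with uniformly bounded derivatives. The integration-by-parts, van der Corput and stationary-phase steps should go through verbatim, since each uses only $L^1$ control of the amplitude and its first two derivatives.
\item alternatively, locally uniform convergence in $b$ of the regularized derivatives to $-\frac1{4\pi}\int\widehat P_\theta(b)\,d\theta$, together with the uniform bound. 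Dominated convergence in $b$ over $[0,a]$ then lets the ordered limit pass through $\int_0^a\,db$.
\end{itemize}
I would try the first route: check that each estimate in the proofs of Lemmas~\ref{lem:ps} and~\ref{lem:ordered-high-frequency-tail} depends on the amplitude only through norms that remain bounded under these cutoffs. The truncation endpoints at $\delta$ and $N$ contribute boundary terms that are harmless for the same reasons as in \eqref{eq:ordered-tail-compact-frequency} and \eqref{eq:ordered-tail-expansion}. Taking the limits in the prescribed order yields $|\mathfrak D(a,\mathrm{L},\mathrm{R})|\le Ca$ with $C$ absolute.
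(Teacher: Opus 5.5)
Your argument is the paper's proof. It handles $\mathrm{L}=0$ via \eqref{eq:mkD-est'}; for $\mathrm{L}>0$ it splits the derivative formula \eqref{eq:direct-difference-derivative} at $\nu=4\mathrm{L}K'_z(0)$, bounds the low-frequency part after angular integration by Lemma~\ref{lem:ps} and the tail for each $\theta$ by Lemma~\ref{lem:ordered-high-frequency-tail}, and concludes from $\mathfrak D(0,\mathrm{L},\mathrm{R})=0$ and the fundamental theorem of calculus.

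Your Step~3 concerns a point the paper only dismisses in a footnote as standard, and if you carry it out, prefer your second route (qualitative convergence at fixed $\mathrm{L},\mathrm{R}$, with the uniform bound needed only for the limit), because the claim that the estimates transfer verbatim to the regularized integrals is too optimistic: in the $(y,\nu)$ variables the factor $e^{-\varepsilon/\sqrt y}$ has $y$-derivative of size $\varepsilon^{-2}$ near $y\sim\varepsilon^{2}$, the factor $e^{-\sigma p^2}=e^{-\sigma y\nu^2}$ is not a function of $y$ alone, and the cutoff $\delta\le p\le N$ becomes the non-product region $\delta\le\nu\sqrt y\le N$.
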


\begin{proof}
If $\mathrm{L}=0$, then $\mathrm{R}=0$ and the estimate follows from \eqref{eq:mkD-est'}. Assume $\mathrm{L}>0$. The differentiation formula \eqref{eq:direct-difference-derivative} and the decomposition \eqref{eq:Phat-decomposition} imply
\begin{align*}
\partial_a\mathfrak D(a,\mathrm{L},\mathrm{R})
=-\frac1{4\pi}\int_0^{2\pi}\widehat P_\theta(a)\,d\theta.
\end{align*}
The first term in \eqref{eq:Phat-decomposition} is bounded after angular integration by Lemma~\ref{lem:ps}, while the second term is bounded by \eqref{eq:Phat-second-bound}, which follows from Lemma~\ref{lem:ordered-high-frequency-tail}. Consequently,
\begin{align*}
\sup_{a\ge0}\sup_{0\le \mathrm{R}\le \mathrm{L}}|\partial_a\mathfrak D(a,\mathrm{L},\mathrm{R})|\le C.
\end{align*}
Since the two cosine terms in \eqref{eq:direct-difference} coincide when $a=0$, one has $\mathfrak D(0,\mathrm{L},\mathrm{R})=0$. The fundamental theorem of calculus therefore gives
\begin{align*}
|\mathfrak D(a,\mathrm{L},\mathrm{R})|
=\left|\int_0^a\partial_\alpha\mathfrak D(\alpha,\mathrm{L},\mathrm{R})\,d\alpha\right|
\le Ca.
\end{align*}
This proves the proposition.
\end{proof}

The preceding proposition closes the proof of the upper bound. Indeed, for $t>0$, \eqref{eq:positive-time-reduction} gives
\begin{align*}
\bigl|(e^{-itH_Z}P_{\mathrm{ac}}(H_Z))(x,y)\bigr|
\le C t^{-3/2}(1+a)
\le C\bigl(t^{-3/2}+Zt^{-1}\bigr),
\qquad a=Z\sqrt{2t}.
\end{align*}
Taking the supremum in $x,y$ proves \eqref{est:dis} for positive time. The negative-time estimate follows by taking adjoints, since $H_Z$ and $P_{\mathrm{ac}}(H_Z)$ are self-adjoint.

\section{Threshold lower bound and long-time sharpness}\label{sec:lower}

The upper bound in Theorem~\ref{thm:main} follows from the direct-difference estimate in Section~\ref{sec:est-Schrodinger-propagator}. We now prove that the inverse-time term is also necessary when $Z^2|t|$ is large. The argument uses the zero-energy behavior of the radial Coulomb spectral measure and the scaling of the full-Laplacian Hamiltonian.

\begin{lemma}
\label{lem:radial-spectral}
Let $Z>0$ and define
\begin{align*}
L_Z=-\frac{d^2}{dr^2}-\frac{Z}{r}
\end{align*}
on $L^2((0,\infty),dr)$ with the regular (Dirichlet) boundary condition at $r=0$. Let $\phi_Z(k,r)$ be the solution of
\begin{align*}
-\phi_Z''(r)-\frac{Z}{r}\phi_Z(r)=k^2\phi_Z(r),
\qquad \phi_Z(k,0)=0,\quad \partial_r\phi_Z(k,0)=1.
\end{align*}
Then the absolutely continuous spectral measure in the $k$-variable is
\begin{align*}
d\rho_Z(k)=\rho_Z'(k)\,dk,
\qquad
\rho_Z'(k)=\frac{2Zk}{1-e^{-\pi Z/k}},\qquad k>0.
\end{align*}
Moreover,
\begin{align*}
\phi_Z(0,r)=\sqrt{\frac rZ}\,J_1\!\bigl(\sqrt{4Zr}\bigr),
\end{align*}
and, for every $R_0<\infty$,
\begin{align*}
\sup_{0\le r\le R_0}|\phi_Z(k,r)-\phi_Z(0,r)|\le C_{Z,R_0}k^2
\qquad (0<k\le1).
\end{align*}
 More precisely, on every bounded $r$-interval the regular solution is smooth (in fact analytic)
 as a function of the parameter $k^2$ near $k=0$.
\end{lemma}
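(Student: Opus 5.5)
The plan is to treat $L_Z$ as a Sturm--Liouville operator that is regular (limit-circle, with the Dirichlet condition) at $r=0$ and limit-point at $\infty$. The spectral density then comes from the Weyl--Titchmarsh formula. If the regular solution has the asymptotics
\begin{align*}
\phi_Z(k,r)=\mathcal A(k)\sin\bigl(kr+\eta\log(2kr)+\sigma\bigr)+o(1),\qquad r\to\infty,
\end{align*}
then the spectral measure in $\lambda=k^2$ is $d\lambda/(\pi k\mathcal A(k)^2)$. As a check in the free case $Z=0$, one has $\phi=\sin(kr)/k$, and this gives the familiar $\sqrt\lambda\,d\lambda/\pi$. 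The logarithmic phase correction does not change the argument: the standard Titchmarsh--Kodaira computation uses only the Wronskian with the Jost-type solution $f(k,r)\sim e^{i(kr+\eta\log 2kr)}$, and $|W(f,\bar f)|=2k$ still holds. The negative eigenvalues are pure point, and the measure on $(0,\infty)$ is absolutely continuous, consistent with Proposition~\ref{prop:spectral-decomp}.

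The main computation is the amplitude $\mathcal A(k)$. I will identify $\phi_Z$ with the regular Coulomb wave function $F_0(\eta,kr)$, whose Sommerfeld parameter is $\eta=-Z/(2k)$; the sign is negative because the potential is attractive. The function $F_0$ has the expansion $F_0=C_0(\eta)\,kr\,(1+O(r))$ at $r=0$, and it is asymptotic to a unit-amplitude sine at infinity. Here $C_0(\eta)^2=2\pi\eta/(e^{2\pi\eta}-1)$ is the Gamow factor. Comparing the normalisations at $r=0$ gives $\phi_Z=F_0/(kC_0)$, so $\mathcal A=1/(kC_0)$. I would derive these facts from the Whittaker representation $F_0\propto M_{i\eta,1/2}(-2ikr)$, using the connection formula for $M$ in terms of $W_{\pm}$ together with $|\Gamma(1+i\eta)|^2=\pi\eta/\sinh(\pi\eta)$. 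The density in $k$ is then
\begin{align*}
\frac{kC_0^2}{\pi}\cdot 2k=\frac{2k^2}{\pi}\cdot\frac{2\pi|\eta|}{1-e^{-2\pi|\eta|}}=\frac{2Zk}{1-e^{-\pi Z/k}}.
\end{align*}
I expect the main obstacle to be careful bookkeeping of signs and normalisations: the factor $2k$ from $d\lambda=2k\,dk$, and the attractive sign of $\eta$, which is exactly what turns $e^{\pi Z/k}-1$ into $1-e^{-\pi Z/k}$ and makes the density tend to $2Zk$ rather than vanish as $k\to0$.

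For the zero-energy solution, I substitute $x=\sqrt{4Zr}$ into $-\phi''-(Z/r)\phi=0$, writing $\phi=\sqrt r\,g(x)$. This reduces the equation to Bessel's equation of order $1$. The regular branch is $J_1$, and the expansion $J_1(x)=x/2+O(x^3)$ gives $\sqrt{r/Z}\,J_1(\sqrt{4Zr})=r+O(r^2)$, which matches the conditions $\phi(0)=0$ and $\phi'(0)=1$.

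For the regularity in $k$, I write the Volterra equation
\begin{align*}
\phi_Z(k,r)=r-\int_0^r(r-s)\Bigl(\frac Zs+k^2\Bigr)\phi_Z(k,s)\,ds.
\end{align*}
The kernel satisfies $|(r-s)(Z/s+k^2)|\le Zr/s\cdot s/r\ldots\le Z+k^2R_0^2$ in the sense that $(r-s)/s\cdot Z\, s\le Z r$ after the weight $\phi\sim s$ is factored out. Concretely, I set $\phi=r\psi$ and get a bounded kernel on $[0,R_0]$. Picard iteration then converges uniformly on $0\le r\le R_0$, and each iterate is a polynomial in $k^2$. The solution is therefore entire in $k^2$, uniformly on $[0,R_0]$, and Taylor's theorem gives $\sup_{r\le R_0}|\phi_Z(k,r)-\phi_Z(0,r)|\le C_{Z,R_0}k^2$ for $0<k\le1$.
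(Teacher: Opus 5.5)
Your proposal is correct and follows essentially the same route as the paper. You identify $\phi_Z$ with $F_0(-Z/(2k),kr)/(kC_0)$ and turn the amplitude $\mathcal A=1/(kC_0)$ and the Gamow factor into the density $\frac{2}{\pi}k^2C_0^2=\frac{2Zk}{1-e^{-\pi Z/k}}$. You then reach Bessel's equation of order one through the substitution $\sqrt{4Zr}$, and get analyticity in $k^2$ by the same Volterra iteration, where your rescaling $\phi=r\psi$ makes the bounded kernel explicit.

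The only real difference is that you read off the density from the Titchmarsh--Kodaira formula $d\lambda/(\pi k\mathcal A^2)$. The paper instead uses the Wronskian/delta-normalization $\int_0^\infty\phi_Z(k,r)\phi_Z(k',r)\,dr=\frac{\pi}{2k^2C_0^2}\delta(k-k')$. The two are equivalent, and yours has the advantage of building in completeness of the expansion. One small slip: with $\eta=-Z/(2k)$ the logarithmic phase should read $kr-\eta\log(2kr)$, not $kr+\eta\log(2kr)$, but this does not affect the argument.
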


\begin{remark}
The operator $L_Z$ is unitarily equivalent to the radial $\ell=0$ part of $H_Z$ on $L^2((0,\infty),r^2dr)$ through
\begin{align*}
U:L^2((0,\infty),r^2dr)\longrightarrow L^2((0,\infty),dr),
\qquad (Uf)(r)=rf(r).
\end{align*}
Indeed, $U H_Z U^{-1}=L_Z$ on regular radial functions.
\end{remark}

\begin{proof}
Put $\eta=-Z/(2k)$ and let $F_0(\eta,x)$ denote the regular Coulomb wave. It satisfies
\begin{align*}
F_0(\eta,x)=C_0(\eta)x+O(x^2),
\qquad C_0(\eta)=e^{-\pi\eta/2}|\Gamma(1+i\eta)|,
\end{align*}
at the origin, and has unit sine amplitude at infinity. Hence
\begin{align*}
\phi_Z(k,r)=\frac{F_0(-Z/(2k),kr)}{C_0(-Z/(2k))k}.
\end{align*}
The large-$r$ asymptotic phase is
\begin{align*}
kr+\frac{Z}{2k}\log(2kr)+\sigma_0\!\left(-\frac{Z}{2k}\right),
\qquad \sigma_0(\eta)=\arg\Gamma(1+i\eta).
\end{align*}
The Wronskian identity for two regular solutions, followed by the standard delta-sequence argument at infinity, gives
\begin{align*}
\int_0^\infty\phi_Z(k,r)\phi_Z(k',r)\,dr
=\frac{\pi}{2k^2C_0(-Z/(2k))^2}\,\delta(k-k').
\end{align*}
Thus $\rho_Z'(k)=\frac2\pi k^2C_0(-Z/(2k))^2$. Since
\begin{align*}
C_0(\eta)^2=\frac{2\pi\eta}{e^{2\pi\eta}-1},
\end{align*}
substitution of $\eta=-Z/(2k)$ yields
\begin{align*}
\rho_Z'(k)=\frac{2Zk}{1-e^{-\pi Z/k}}.
\end{align*}
Completeness of this generalized eigenfunction expansion follows from the Hankel--Whittaker diagonalization of the radial Coulomb operator; equivalently, it is the radial restriction of Proposition~\ref{prop:spectral-decomp}.

For the zero-energy solution, set $s=\sqrt{4Zr}$ and
\begin{align*}
\phi_0(r)=\sqrt{\frac rZ}J_1(s)=\frac{s}{2Z}J_1(s).
\end{align*}
The Bessel equation and $ds/dr=2Z/s$ give
\begin{align*}
-\phi_0''(r)-\frac Zr\phi_0(r)=0.
\end{align*}
Since $J_1(s)=s/2+O(s^3)$, one has $\phi_0(r)=r+O(r^2)$, so $\phi_0$ is the regular zero-energy solution.

The regular solution satisfies the Volterra equation
\begin{align*}
\phi_Z(k,r)=r-\int_0^r(r-s)\left(\frac Zs+k^2\right)\phi_Z(k,s)\,ds.
\end{align*}
On every bounded interval, the Volterra series is uniformly convergent in $k^2$, and $\phi_Z(k,s)=O(s)$ removes the apparent singularity at $s=0$. Subtracting the equation at $k=0$ and iterating the resulting Volterra inequality gives the stated $O(k^2)$ bound.
\end{proof}

\begin{proposition}
\label{prop:threshold-lower}
Let $H_Z=-\Delta-Z|x|^{-1}$ on $\mathbb R^3$. There are universal constants $c>0$ and $T_0>0$ such that
\begin{align*}
Z^2|t|\ge T_0
\quad\Longrightarrow\quad
\bigl\|e^{-itH_Z}P_{\mathrm{ac}}(H_Z)\bigr\|_{L^1\to L^\infty}
\ge c\,\frac Z{|t|}.
\end{align*}
\end{proposition}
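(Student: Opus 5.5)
We will prove Proposition~\ref{prop:threshold-lower} by testing the propagator against radial data localized near the origin, so that only the $\ell=0$ channel matters, and by reading off the threshold behaviour of the radial spectral density from Lemma~\ref{lem:radial-spectral}.

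\textbf{Step 1: scaling.} If $(U_\lambda f)(x)=\lambda^{3/2}f(\lambda x)$, then $U_\lambda H_Z U_\lambda^{-1}=\lambda^{2}H_{Z/\lambda}$. Choosing $\lambda=Z$ gives $H_Z=Z^2\,U_Z^{-1}H_1U_Z$. Since the $L^1\to L^\infty$ norm rescales by the factor $Z^3$, we get
\begin{align*}
\|e^{-itH_Z}P_{\mathrm{ac}}(H_Z)\|_{L^1\to L^\infty}=Z^3\,\|e^{-iZ^2tH_1}P_{\mathrm{ac}}(H_1)\|_{L^1\to L^\infty}.
\end{align*}
The claim therefore reduces to showing $\|e^{-i\tau H_1}P_{\mathrm{ac}}\|_{L^1\to L^\infty}\ge c\,\tau^{-1}$ for $\tau=Z^2|t|\ge T_0$. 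Indeed, $Z^3\tau^{-1}=Z/|t|$.

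\textbf{Step 2: radial reduction and a kernel formula.} Fix a radial $f=\chi(|x|)\ge0$ in $C_c^\infty$, supported in $\{1\le |x|\le 2\}$ and not identically zero, and take a radial test function $g$ of the same type. By Lemma~\ref{lem:radial-spectral} and the remark after it, with $Z=1$,
\begin{align*}
\langle e^{-i\tau H_1}P_{\mathrm{ac}}f,g\rangle=c_0\int_0^\infty e^{-i\tau k^2}\,\widehat f(k)\,\overline{\widehat g(k)}\,\rho_1'(k)\,dk,
\end{align*}
where $\widehat f(k)=\int_0^\infty \phi_1(k,r)\,r\chi(r)\,dr$. Here $\rho_1'(k)=2k/(1-e^{-\pi/k})$. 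Define the amplitude $m(k)=\widehat f(k)\overline{\widehat g(k)}/(1-e^{-\pi/k})$, so the integral is $\int_0^\infty e^{-i\tau k^2}\,2k\,m(k)\,dk$. By the last assertion of Lemma~\ref{lem:radial-spectral}, $\widehat f(k)\to \widehat f(0)=\int\phi_1(0,r)r\chi\,dr$ with an $O(k^2)$ error. Since $\phi_1(0,r)=\sqrt r J_1(2\sqrt r)>0$ on $r\in[1,2]$ (because $2\sqrt r<3.83$), we get $\widehat f(0)>0$. Also, $1-e^{-\pi/k}=1+O(e^{-\pi/k})$. Hence $m(0)=\widehat f(0)\widehat g(0)>0$, and $m$ is $C^1$ near $0$.

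\textbf{Step 3: asymptotics.} Substituting $E=k^2$ turns the integral into $\int_0^\infty e^{-i\tau E}\,\widetilde m(E)\,dE$, with $\widetilde m(E)=m(\sqrt E)$. We split it with a cutoff $\psi(E)$ equal to $1$ near $0$. The low-energy piece equals $m(0)/(i\tau)+O(\tau^{-2})$ after one integration by parts, using that $\widetilde m'$ is integrable near $0$; this follows from the smooth dependence on $k^2$. The high-energy piece is $O(\tau^{-N})$, because $\widehat f(k)$ decays rapidly in $k$ for smooth compactly supported $f$ and is smooth in $E$ for $E>0$.

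\textbf{Step 4: conclusion.} Steps 2 and 3 give $|\langle e^{-i\tau H_1}P_{\mathrm{ac}}f,g\rangle|\ge \tfrac12 m(0)\tau^{-1}$ for $\tau\ge T_0$. The left side is at most $\|e^{-i\tau H_1}P_{\mathrm{ac}}\|_{L^1\to L^\infty}\|f\|_1\|g\|_1$, which gives the lower bound. Since $Z$ has been scaled out and $f,g$ are fixed, the constants are universal.

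\textbf{Main obstacle.} The hardest step is the high-energy control in Step 3. It requires decay and smoothness of the distorted radial transform $\widehat f(k)$ as $k\to\infty$, uniformly. We would try to obtain this first by writing $\widehat f(k)=k^{-2N}\langle \phi_1(k,\cdot),L_1^N(r\chi)\rangle$. This uses self-adjointness of $L_1$ and the fact that $r\chi$ lies in the domain of every power of $L_1$, which holds because $\chi$ vanishes near $0$. We would combine this with the bound $|\phi_1(k,r)|\lesssim C_0^{-1}k^{-1}$ coming from the Coulomb-wave normalization used in the proof of Lemma~\ref{lem:radial-spectral}.
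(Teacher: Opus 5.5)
Your proposal is correct and is essentially the paper's argument. You scale to $Z=1$, work in the $\ell=0$ channel with a bump supported where $\phi_1(0,\cdot)$ has a fixed sign, use that $\rho_1'(k)\,dk=dE/(1-e^{-\pi/\sqrt{E}})$ has nonvanishing density at $E=0$ so that the endpoint integration by parts in $E=k^2$ yields the $\tau^{-1}$ term with coefficient given by the zero-energy transform, and discard the high-energy part using rapid decay of the distorted transform.

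The differences are cosmetic. You pair against a second radial bump $g$ and use $|\langle Tf,g\rangle|\le\|T\|_{L^1\to L^\infty}\|f\|_1\|g\|_1$ instead of evaluating at a point $r_0$ and invoking continuity in $r$, which is slightly cleaner since it avoids pointwise evaluation of the spectral integral. Your conjugation formula also has the dilation reversed: in fact $U_\lambda H_ZU_\lambda^{-1}=\lambda^{-2}H_{\lambda Z}$, so $H_Z=Z^2U_ZH_1U_Z^{-1}$, but the factor $Z^3$ you state in the norm identity is the right one.
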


\begin{proof}
We first consider $Z=1$ and $t>0$. Choose $r_0>0$ with $\phi_0(r_0)\ne0$, and choose a nonnegative smooth radial function $f$ supported in an annulus on which $\phi_0$ has fixed nonzero sign. Set
\begin{align*}
A_0=\int_0^\infty\phi_0(s)s f(s)\,ds\ne0.
\end{align*}
The radial spectral resolution gives
\begin{align*}
E(t,r_0)=\frac1{r_0}\int_0^\infty e^{-itk^2}\phi_1(k,r_0)A_f(k)\rho_1'(k)\,dk,
\qquad
A_f(k)=\int_0^\infty\phi_1(k,s)s f(s)\,ds.
\end{align*}
Choose a smooth cutoff $\chi$ supported near zero and equal to one there. With $u=k^2$,
\begin{align*}
\frac1{r_0}\int_0^\infty e^{-itk^2}\chi(k^2)\phi_1(k,r_0)A_f(k)\rho_1'(k)\,dk
=\frac1{r_0}\int_0^\infty e^{-itu}g(u)\,du,
\end{align*}
where
\begin{align*}
g(u)=\frac{\chi(u)\phi_1(\sqrt u,r_0)A_f(\sqrt u)}{1-e^{-\pi/\sqrt u}},
\end{align*}
with $e^{-\pi/\sqrt u}$ defined to be zero at $u=0$. Since this factor is flat at $u=0$ and $\chi$ is supported in a sufficiently small neighborhood of zero, $g\in C_c^2([0,\infty))$ and
\begin{align*}
g(0)=\phi_0(r_0)A_0.
\end{align*}
Integrating by parts twice, with the boundary terms at infinity vanishing and the endpoint at $u=0$ retained, yields
\begin{align*}
\int_0^\infty e^{-itu}g(u)\,du=-\frac{i g(0)}{t}+O(t^{-2}).
\end{align*}

For the complementary part, which is supported in $k\ge k_0>0$, the exact Volterra equation is
\begin{align*}
\phi_1(k,r)=\frac{\sin(kr)}{k}-\frac1k\int_0^r\sin(k(r-s))\frac{\phi_1(k,s)}s\,ds.
\end{align*}
Repeated differentiation in $k$ gives polynomial bounds on compact $r$-intervals. Since $sf(s)$ is smooth and compactly supported away from zero, repeated Green's identities imply rapid decay of $A_f(k)$ and all of its derivatives. Because $k\ge k_0$ on this part, repeated integration by parts in $k$, using $\partial_k e^{-itk^2}=-2itk e^{-itk^2}$, gives an $O(t^{-2})$ contribution. Consequently,
\begin{align*}
E(t,r_0)=-\frac{i\phi_0(r_0)A_0}{r_0t}+O(t^{-2}).
\end{align*}
The leading coefficient is nonzero. Hence, for all sufficiently large $t$,
\begin{align*}
|E(t,r_0)|\ge c_0t^{-1}.
\end{align*}
Continuity in $r$ gives the same lower bound on a neighborhood of $r_0$, so the essential supremum has the same lower bound. Dividing by the fixed $L^1$ norm of $f$ proves
\begin{align*}
\bigl\|e^{-itH_1}P_{\mathrm{ac}}(H_1)\bigr\|_{L^1\to L^\infty}\ge c t^{-1}.
\end{align*}
The case $t<0$ follows by taking the adjoint, so the same lower bound holds with $t$ replaced by $|t|$.

For general $Z$, let $(S_Zg)(x)=Z^{3/2}g(Zx)$. Then
\begin{align*}
H_ZS_Z=Z^2S_ZH_1,
\qquad
K_Z(t;x,y)=Z^3K_1(Z^2t;Zx,Zy),
\end{align*}
where $K_Z$ is the absolutely continuous propagator kernel. Therefore
\begin{align*}
\bigl\|e^{-itH_Z}P_{\mathrm{ac}}(H_Z)\bigr\|_{L^1\to L^\infty}
=Z^3\bigl\|e^{-iZ^2tH_1}P_{\mathrm{ac}}(H_1)\bigr\|_{L^1\to L^\infty}
\ge c\frac Z{|t|}
\end{align*}
whenever $Z^2|t|\ge T_0$.
\end{proof}

\begin{proposition}[Long-time sharpness]\label{cor:sharp-full}
There are universal constants $c,C>0$ and $T_1>0$ such that
\begin{align*}
Z^2|t|\ge T_1
\quad\Longrightarrow\quad
c\frac Z{|t|}
\le
\bigl\|e^{-itH_Z}P_{\mathrm{ac}}(H_Z)\bigr\|_{L^1\to L^\infty}
\le C\frac Z{|t|}.
\end{align*}
In particular, the upper bound in Theorem~\ref{thm:main} is sharp in the long-time regime.
\end{proposition}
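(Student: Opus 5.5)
The plan is to combine the two bounds already available, with $T_1$ chosen large enough that both are in force and the free term is dominated.

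\emph{Lower bound.} Take $T_1\ge T_0$, with $T_0$ the constant of Proposition~\ref{prop:threshold-lower}. Then $Z^2|t|\ge T_1$ implies $Z^2|t|\ge T_0$, and Proposition~\ref{prop:threshold-lower} gives
\[
\bigl\|e^{-itH_Z}P_{\mathrm{ac}}(H_Z)\bigr\|_{L^1\to L^\infty}\ge c\,Z/|t|
\]
with a universal $c$. Nothing else is needed for this half.

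\emph{Upper bound.} Apply Theorem~\ref{thm:main} with its absolute constant $C$. This gives
\[
\bigl\|e^{-itH_Z}P_{\mathrm{ac}}(H_Z)\bigr\|_{L^1\to L^\infty}\le C\bigl(|t|^{-3/2}+Z|t|^{-1}\bigr).
\]
Now compare the two terms:
\[
|t|^{-3/2}=\frac{Z}{|t|}\cdot\frac{1}{Z|t|^{1/2}}=\frac{Z}{|t|}\,(Z^2|t|)^{-1/2}\le T_1^{-1/2}\,\frac{Z}{|t|}.
\]
Hence, whenever $Z^2|t|\ge T_1$, the norm is at most $C(1+T_1^{-1/2})Z/|t|$, which is of the claimed form with a universal constant. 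Any $T_1\ge\max\{T_0,1\}$ works.

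\emph{Consequence for Theorem~\ref{thm:main}.} In the regime $Z^2|t|\ge T_1$, the two-sided bound shows that the right-hand side of \eqref{est:dis} is comparable to the true operator norm. So the $Z|t|^{-1}$ term cannot be improved, and the theorem is sharp there.

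There is no real obstacle, since both halves are direct citations. The only points to check are that the constants in Theorem~\ref{thm:main} and Proposition~\ref{prop:threshold-lower} are universal (independent of $Z$), which both statements assert, and that negative times are covered. They are, because both cited results are stated in terms of $|t|$.
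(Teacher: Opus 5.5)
Your proposal is correct and follows the paper's proof. The lower bound is cited from Proposition~\ref{prop:threshold-lower} and the upper bound from Theorem~\ref{thm:main}, with the free term absorbed via $|t|^{-3/2}=(Z^2|t|)^{-1/2}\,Z|t|^{-1}$ once $Z^2|t|$ is bounded below; your explicit constant $C(1+T_1^{-1/2})$ with $T_1\ge\max\{T_0,1\}$ is simply a more explicit form of the paper's ``after increasing $T_1$ if necessary.''
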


\begin{proof}
The lower bound is Proposition~\ref{prop:threshold-lower}. For the upper bound, Theorem~\ref{thm:main} gives
\begin{align*}
\bigl\|e^{-itH_Z}P_{\mathrm{ac}}(H_Z)\bigr\|_{L^1\to L^\infty}
\le C\bigl(|t|^{-3/2}+Z|t|^{-1}\bigr).
\end{align*}
When $Z^2|t|\ge1$, one has $|t|^{-3/2}\le Z|t|^{-1}$, which proves the claim after increasing $T_1$ if necessary.
\end{proof}

\begin{center}

\end{center}

\end{document}